\documentclass[12pt]{amsart}
\usepackage{amsmath,times,epsfig,amssymb,amsbsy,amscd,amsfonts,amstext,color,bm,mathrsfs}
\usepackage[margin=1in]{geometry}
\usepackage{enumerate}
\usepackage{placeins}
\usepackage{array}
\usepackage{tabulary}
\usepackage{multirow}
\usepackage{graphicx}
\usepackage{hyperref}
\usepackage{hhline}
\usepackage{soul}
\usepackage{multirow}
\usepackage[table, svgnames, dvipsnames]{xcolor}
\usepackage{makecell, cellspace, caption}
\setlength\cellspacetoplimit{3pt}
\setlength\cellspacebottomlimit{3pt}
\usepackage{subcaption}
\usepackage[color,matrix,arrow]{xypic}
\usepackage{tikz}
        \usetikzlibrary{arrows.meta}
        \usetikzlibrary{arrows}
        \usetikzlibrary{quotes}
        \usetikzlibrary{graphs}
        \usetikzlibrary{positioning}
\usepackage{tikz-cd}
\hypersetup{
  colorlinks   = true, %Colours links instead of ugly boxes
  urlcolor     = blue, %Colour for external hyperlinks
  linkcolor    = blue, %Colour of internal links
  citecolor   = red %Colour of citations
}

\tikzstyle{v} = [circle, draw, inner sep=1.7pt, minimum size=3pt, fill=black]

\theoremstyle{plain}
\newtheorem{theorem}{Theorem}[section]
\newtheorem{corollary}[theorem]{Corollary}
\newtheorem{lemma}[theorem]{Lemma}

\newtheorem{proposition}[theorem]{Proposition}

%-----numbering of equations
\makeatletter
    
    \@addtoreset{equation}{section}
  \makeatother

%
%---------- Theorem environments with definition style -----------
%

\theoremstyle{definition}
\newtheorem{definition}[theorem]{Definition}

\newtheorem{example}[theorem]{Example}

\theoremstyle{remark}
\newtheorem{remark}[theorem]{Remark}

\DeclareMathOperator{\Der}{Der}
\newcommand{\Ac}{\mathscr{A}}
\newcommand{\Bc}{\mathscr{B}}
\newcommand{\Dd}{\mathscr{D}}
\newcommand{\Pp}{\mathscr{P}}

\newcommand{\Hc}{\mathscr{H}}
\newcommand{\A}{\mathcal{A}}

\newcommand{\scD}{\mathcal{D}}

\newcommand{\R}{\mathbb{R}}
\newcommand{\Qc}{\mathcal{Q}}
\newcommand{\scR}{\mathcal{R}}

\newcommand{\Pc}{\mathcal{P}}

\newcommand{\Z}{\mathbb{Z}}

\newcommand{\E}{{\mathcal{E}}}

\newcommand{\vn}{\noindent}

\newcommand{\I}{{\mathcal{I}}}
\newcommand{\J}{{\mathcal{J}}}

\newcommand{\tbf}{\textbf}

\newcommand{\Hom}{\operatorname{Hom}}
\newcommand{\quasi}{\operatorname{quasi}}

\newcommand{\rk}{\operatorname{rk}}
\newcommand{\Mat}{\operatorname{Mat}}

\newcommand{\codim}{\operatorname{codim}} 
\renewcommand{\part}{\operatorname{par}}

\newcommand{\cov}{<\mathrel{\mkern-5mu}\mathrel{\cdot}}

\newcolumntype{K}[1]{>{\centering\arraybackslash}p{#1}}

\makeatletter
\@namedef{subjclassname@2020}{%
  \textup{2020} Mathematics Subject Classification}
\makeatother

\begin{document}

\title[Inductive and divisional posets]{Inductive and divisional posets}

\begin{abstract}
\label{sec:intro}
We call a poset factorable if its characteristic polynomial has all positive integer roots. Inspired by inductive and divisional freeness of a central hyperplane arrangement, we introduce and study the notion of inductive posets and their superclass of divisional posets. It then motivates us to define the so-called inductive and divisional abelian (Lie group) arrangements, whose posets of layers serve as the main examples of our posets. Our first main result is that every divisional poset is factorable. Our second main result shows that the class of inductive posets contains strictly supersolvable posets, the notion recently introduced due to Bibby and Delucchi (2022). This result can be regarded as an extension of a classical result due to Jambu and Terao (1984), which asserts that every supersolvable hyperplane arrangement is inductively free. Our third main result is an application to toric arrangements, which states that the toric arrangement defined by an arbitrary ideal of a root system of type $A$, $B$ or $C$ with respect to the root lattice is inductive. 
 \end{abstract}

\author{Roberto Pagaria}
\address{Roberto Pagaria, Universit\`a di Bologna, Dipartimento di Matematica, Piazza di Porta San Donato 5 -- 40126 Bologna, Italy}
\email{roberto.pagaria@unibo.it}

\author{Maddalena Pismataro}
\address{Maddalena Pismataro, Universit\`a di Bologna, Dipartimento di Matematica, Piazza di Porta San Donato 5 -- 40126 Bologna, Italy}
\email{maddalena.pismataro2@unibo.it}

\author{Tan Nhat Tran}
\address{Tan Nhat Tran, Institut f\"ur Algebra, Zahlentheorie und Diskrete Mathematik, Fakult\"at f\"ur Mathematik und Physik, Leibniz Universit\"at Hannover, Welfengarten 1, D-30167 Hannover, Germany.}
\email{tan.tran@math.uni-hannover.de}

\author{Lorenzo Vecchi}
\address{Lorenzo Vecchi, Universit\`a di Bologna, Dipartimento di Matematica, Piazza di Porta San Donato 5 -- 40126 Bologna, Italy}
\email{lorenzo.vecchi6@unibo.it}

\subjclass[2020]{Primary 06A07, Secondary 52C35}
\keywords{hyperplane arrangement, toric arrangement, abelian arrangement, inductively and divisionally free arrangements, inductive and divisional posets, factorable poset, characteristic polynomial, root system}

\date{\today}
\maketitle

\tableofcontents

%********************************************************************************************************
\section{Introduction}
A \emph{hyperplane arrangement} $\Hc$  is a finite set of hyperplanes ($1$-codimensional affine subspaces) in a finite dimensional vector space $V$. 
The \emph{intersection poset}  $L(\Hc)$ of $\Hc$ is the set of all nonempty intersections of hyperplanes in $\Hc$, which is often referred to as the  \emph{combinatorics} of $\Hc$. 
The arrangement $\Hc$ is called \emph{factorable} if  its \emph{characteristic polynomial} $\chi_{\Hc}(t)$ has all nonnegative integer roots.
In this case, we call the roots of $\chi_{\Hc}(t)$ the \emph{(combinatorial) exponents} of $\Hc$.

An arrangement is called \emph{central} if every hyperplane in it goes through the origin. 
A central arrangement $\Hc$ is said to be \emph{free} if its \emph{module $D(\Hc)$ of logarithmic derivations} is a free module (Definition \ref{def:free-arr}).  
A remarkable theorem connecting algebra and combinatorics of arrangements due to Terao  asserts that if an arrangement $\Hc$ is free, then it is factorable and its combinatorial exponents coincide with the \emph{degrees} of the derivations in any basis for $D(\Hc)$ (Theorem \ref{thm:Factorization}). 

\begin{definition} 
\label{def:comb-prop}
A property $P$ of arrangements is called a \emph{combinatorial property (or combinatorially determined)} if for any distinct arrangements $\Hc_1$ and $\Hc_2$ in $V$ having the same combinatorics, i.e., their intersection posets are  isomorphic $L(\Hc_1) \simeq L(\Hc_2)$, then $\Hc_1$ has property $P$ if and only if $\Hc_2$ has property $P$.
\end{definition}

Based on the factorization theorem mentioned above, Terao conjectured that freeness is a combinatorial property \cite[Conjecture 4.138]{OT92}. 
Terao's conjecture remains open till now even in dimension $3$.

A natural approach to the conjecture is to find a significant class of arrangements whose freeness is combinatorially determined. 
Motivated by the addition-deletion theorem for free arrangements \cite[Theorem 4.51]{OT92}, 
Terao first defined the class of \emph{inductively free arrangements} in which an arrangement can be built from the \emph{empty arrangement} by adding a hyperplane one at a time subject to the inductive freeness of both deleted and restricted arrangements, and a divisibility condition on the characteristic polynomials (Definition \ref{def:IF-arr}). 
A notable feature of this class due to Jambu and Terao \cite{JT84} is that it contains \emph{supersolvable arrangements} (Definition \ref{def:SS-arr}), a prominent class of arrangements defined earlier by Stanley \cite{St72}. 
Later on, Abe \cite{Abe16} proved a refinement of the addition-deletion theorem, and introduced a proper superclass of inductively free arrangements, the so-called \emph{divisionally free arrangements} (Definition \ref{def:DF-arr}). 
Both inductively and divisionally free arrangements are combinatorially determined, proper subclasses of free arrangements (Remark \ref{rem:all-property}). 
In particular, inductive or divisional freeness is a sufficient condition for the arrangement' factorability.

In recent years, there has been increasing attention towards extending the known properties of hyperplane arrangements to \emph{toric arrangements}, or more generally, to \emph{abelian arrangements}. 
Given an abelian Lie group $G=(\mathbb{S}^1)^a\times \R^b$ ($a,b \ge 0$) and a finite set $\A$ of integral vectors in $\Gamma=\Z^\ell$, Liu, Yoshinaga and the third author \cite{LTY21} defined the \emph{abelian arrangement} $\Ac =\Ac(\A, G)$ by means of group homomorphisms from $\Gamma$ to $G$ (see Section \ref{sec:ind-abel-arr} for details). 
In particular, when $G =\R$ (or  $\mathbb{C}$) we obtain a real (or complex) hyperplane arrangement, and when $G =  \mathbb{S}^1$ (or $ \mathbb{C}^\times $) this is known as a real (or complex) \emph{toric arrangement} which describes a finite set of (translated) hypertori in a finite dimensional torus. 

We recall some important results of abelian arrangements. 
In  \cite{LTY21}, a formula for the \emph{Poincar\'e polynomial} of the complement of $\Ac$ when $G$ is noncompact (i.e., $b > 0$) is given; 
this generalizes the formulas of Orlik and Solomon \cite{OS80}, and De Concini, Procesi, and Moci \cite{DP05,L12} for complex hyperplane and toric arrangements. 
(The cohomology ring structure is also known \cite{OS80,DP05,CDDMP20} in the case of hyperplane or toric arrangements.)
In  \cite{TY19}, the intersection poset (or  poset of \emph{layers}) $L(\Ac)$ of $\Ac$ is defined as the set of all connected components of intersections of elements in $\Ac$, and its characteristic polynomial is computed. 

It is well-known that the intersection poset of a central hyperplane arrangement is a \emph{geometric lattice} (Definition \ref{def:GL}). 
Bibby and Delucchi \cite{BD22} recently introduced a more general notion of \emph{(locally) geometric posets} (Definitions \ref{def:LGP} and \ref{def:GP}) and showed that these posets describe the intersection data of abelian arrangements (Theorem \ref{thm:GP}). 
Furthermore, based on an extension of the concept of lattice modularity, the authors defined the notion of \emph{strictly supersolvable posets} (Definition \ref{def:SSS}), which is of our particular interest here. 
It is proved that every strictly supersolvable poset is factorable (Theorem \ref{thm:SSS-factor}), which extends the result by Stanley for supersolvable lattices \cite{St72}. 

The first motivation for this work is a pursuit of a theory for ``free abelian arrangements". 
As of this writing, we do not know how to pass from algebraic consideration of freeness of hyperplane arrangements to abelian or just toric arrangements. 
However, at the purely combinatorial level using only information from the posets, it is possible to define and study the combinatorial structures of abelian arrangements and geometric posets in the same way that inductive freeness and divisional freeness do for hyperplane arrangements and geometric lattices.

In this paper, we give definitions of \emph{inductive} and \emph{divisional posets} as subclasses of locally geometric posets (Definitions \ref{def:IP} and \ref{def:DP}).
The former is a proper subclass of the latter owing to a deletion-restriction formula for characteristic polynomials  (Theorem \ref{thm:LGP-DC} and Proposition \ref{prop:IP<DP}). 
On the arrangement theoretic side, we define \emph{inductive} and \emph{divisional arrangements} in a similar way (Definitions \ref{def:IA} and \ref{def:DA}). 
We show that an abelian arrangement is inductive (resp., divisional) if and only if its intersection poset is inductive (resp., divisional) (Theorem \ref{thm:IP=IA-DP=DA}).
As a consequence, inductiveness and divisionality are combinatorial properties of abelian arrangements (Corollary \ref{cor:IA-DA-combinatorial}). 
 
The second motivation is a contribution to factorability of an abelian arrangement, or more generally, of a locally geometric poset (Definition \ref{def:poset-exp}). 
 Beyond ranked lattices, there are some reasons for an arbitrary poset to be factorable (e.g., \cite{Hal17}). 
Our first main result in the paper is that a divisional (in particular, an inductive) poset has this factorability. 
  \begin{theorem}
\label{thm:DP-factor-intro}
If a poset is divisional, then it is factorable.
\end{theorem}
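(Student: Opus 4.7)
My plan is to prove factorability of a divisional poset $P$ by induction on its rank $r = \rk(P)$. The base case $r \le 1$ is immediate: $\chi_P(t)$ is then a monic integer polynomial of degree at most one, whose unique root (if any) is a non-negative integer equal to the number of atoms of $P$.

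For the inductive step, suppose $r \ge 2$ and $P$ is divisional. By Definition \ref{def:DP} there exists an atom $H \in P$ such that the upper interval $P^H$ (of rank $r-1$) is again divisional and $\chi_{P^H}(t)$ divides $\chi_P(t)$ in $\Z[t]$. Applying the induction hypothesis to $P^H$ yields
\[
\chi_{P^H}(t) = \prod_{i=1}^{r-1}(t - e_i), \qquad e_i \in \Z_{>0}.
\]
Since $\chi_P(t)$ is a monic degree-$r$ integer polynomial divisible by $\chi_{P^H}(t)$, the quotient must be a monic linear polynomial with integer coefficients, so $\chi_P(t) = (t-e)\chi_{P^H}(t)$ for a unique $e \in \Z$. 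The inductive step thus reduces to proving $e \ge 1$, which I expect to be the main obstacle.

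To establish positivity of $e$, I would evaluate the identity $\chi_P(t) = (t-e)\chi_{P^H}(t)$ at $t=0$, yielding $\chi_P(0) = -e \cdot \chi_{P^H}(0)$, and then exploit the sign information on M\"obius values in locally geometric posets. Specifically, since $P$ is locally geometric, each lower interval $[\hat{0}, x]$ is a geometric lattice, so by Rota's sign theorem $\mu_P(\hat{0}, x)$ has sign $(-1)^{\rk(x)}$; summing over the maximal elements (which all have rank $r$) gives $(-1)^r \chi_P(0) > 0$. The same sign analysis applies to $P^H$, since its lower intervals $[H,x]$ are upper intervals of the geometric lattices $[\hat{0},x]$ at the atom $H$, and such contractions of geometric lattices are again geometric lattices; this yields $(-1)^{r-1}\chi_{P^H}(0) > 0$. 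Substituting these signs into $\chi_P(0) = -e \cdot \chi_{P^H}(0)$ forces $e > 0$, and combined with $e \in \Z$ we conclude $e \ge 1$, so $\chi_P(t)$ has all positive integer roots and the induction closes. An alternative route for this final positivity step, should the M\"obius sign argument prove unwieldy, would be to combine the divisibility $\chi_{P^H}(t)\mid \chi_P(t)$ with the deletion-restriction formula of Theorem \ref{thm:LGP-DC} to identify $e$ as a difference of atom counts and then argue positivity via the local geometric-lattice structure of the rank-$2$ lower intervals.
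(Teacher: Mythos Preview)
Your proof is correct and follows essentially the same approach as the paper: induct on the rank, use the divisibility from Definition~\ref{def:DP} to write $\chi_P(t)=(t-e)\chi_{P''}(t)$ with $e\in\Z$, and then use the sign of the M\"obius function in geometric lattices to force $e>0$. The paper packages your sign argument into a separate lemma (Lemma~\ref{lem:SAIS}, stating that $\chi_P$ strictly alternates in sign) and then observes $d_1\cdots d_r>0$; your evaluation at $t=0$ is the same computation restricted to the constant term. One minor slip: when you write ``summing over the maximal elements (which all have rank $r$)'', note that a locally geometric poset need not be pure, so not every maximal element has rank $r$; what you actually need (and what gives the right constant term) is the sum over elements of rank exactly $r$, of which there is at least one.
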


Our second main result is a generalization of the classical result of Jambu and Terao \cite{JT84} mentioned earlier for supersolvable and inductively free arrangements. 
   \begin{theorem}
\label{thm:SSS<IP-intro}
If a poset is strictly supersolvable, then it is inductive.
\end{theorem}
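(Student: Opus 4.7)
My plan is to mimic the classical argument of Jambu and Terao \cite{JT84}, proceeding by induction on the rank of $P$, with an inner induction on the number of atoms. The base case in which $P$ is trivial (or has rank at most one) follows directly from Definition \ref{def:IP}. For the inductive step, strict supersolvability (Definition \ref{def:SSS}) endows $P$ with a distinguished modular maximal chain $\hat{0}=X_0\cov X_1\cov\cdots\cov X_r=\hat{1}$ together with the associated splitting of the atom set. I would choose an atom $a$ lying off this skeleton, i.e., one with $a\not\le X_{r-1}$; such an atom must exist whenever $P$ strictly extends the sub-poset generated by the modular coatom.

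Then I would verify the three conditions prescribed by Definition \ref{def:IP} for the triple $(P,\,P\setminus a,\,P^a)$. \emph{(i) Deletion.} After removing $a$, the chain $(X_0,\dots,X_r)$ remains a modular maximal chain of $P\setminus a$, which therefore continues to be strictly supersolvable with one fewer atom; by the inner induction, $P\setminus a$ is inductive. \emph{(ii) Restriction.} The upper interval $P^a=[a,\hat{1}]$ has rank $r-1$; the joined chain $a=a\vee X_0\le a\vee X_1\le\cdots\le a\vee X_r=\hat{1}$ should form a modular maximal chain of $P^a$ after discarding repetitions, so by the outer induction on rank, $P^a$ is inductive. \emph{(iii) Divisibility.} Theorem \ref{thm:SSS-factor} factors both $\chi_P(t)$ and $\chi_{P^a}(t)$ into linear terms over their respective exponents; standard bookkeeping of these exponents along the modular chain yields $\chi_{P^a}(t)\mid\chi_P(t)$.

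The principal obstacle is step~(ii): showing that the upper interval $[a,\hat{1}]$ inherits strict supersolvability in the locally geometric sense of \cite{BD22}. In classical geometric lattices, joins with a fixed element preserve modularity via well-known lattice identities; in the present poset generalization, where joins are defined only locally, one must verify that each $a\vee X_i$ exists in $P$, that consecutive joins remain in covering position within $P^a$, and that the Bibby--Delucchi modularity axioms transfer to the induced chain. A secondary subtlety is compatibility across iterations: as atoms are peeled off one by one, I must confirm that a single modular maximal chain can be reused consistently so that the inner induction hypothesis applies to every intermediate deletion. I would expect to resolve both points by a direct appeal to the local semimodularity properties of locally geometric posets developed in \cite{BD22}, combined with a careful description of the behavior of the atom splitting under deletion.
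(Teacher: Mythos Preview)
Your overall plan---induct on rank via the supersolvable structure, and within each step peel off atoms outside the penultimate stratum one by one---is exactly the paper's approach. However, your framing contains a genuine mismatch with the setting. A strictly supersolvable locally geometric poset $\Pc$ need not be a lattice and need not have a top element $\hat{1}$; Definition~\ref{def:SSS} supplies a chain of \emph{TM-ideals} $\{\hat{0}\} = \Qc_0 \subsetneq \cdots \subsetneq \Qc_r = \Pc$ (order ideals, not single elements), so there is no ``modular coatom $X_{r-1}$'' and the joins $a \vee X_i$ you invoke in step~(ii) are not defined as elements of $\Pc$. Everything must be rephrased in terms of the TM-ideal $\Qc := \Qc_{r-1}$ and atoms $a \in A(\Pc) \setminus A(\Qc)$.

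Once reframed, the paper dispatches your ``principal obstacle'' more cleanly than you anticipate. Instead of building a chain inside the restriction, it proves directly (Lemma~\ref{lem:>=a}) that $x \mapsto x \vee a$ is a poset \emph{isomorphism} $\Qc \xrightarrow{\sim} \Pc_{\ge a}$, with inverse $x \mapsto x'$ furnished by Proposition~\ref{prop:x'}; the TM condition~(1*) is precisely what forces $x \vee a$ to be a single element. Thus $\Pc''$ is inductive simply because $\Qc$ is (outer induction), and moreover $\chi_{\Pc''} = \chi_{\Qc}$. The deletion step and the divisibility then proceed essentially as you outline, with the modularity-preservation fact \cite[Lemma~4.6]{JT84} used (as you correctly anticipate) to show that $\Qc$ remains a TM-ideal in each intermediate poset $\Pc(A(\Qc) \cup \{a_1,\ldots,a_i\})$. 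A minor correction: your divisibility requirement should read $\chi_{\Pc''}\mid\chi_{\Pc'}$ per Definition~\ref{def:IP}, not $\chi_{\Pc''}\mid\chi_{\Pc}$, though the two are equivalent via Theorem~\ref{thm:LGP-DC}.
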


Using the notion of \emph{characteristic quasi-polynomial} from \cite{KTT08}, the third author  \cite{T19} showed that the toric arrangement defined by an arbitrary \emph{ideal} of a root system of type $A$, $B$ or $C$ with respect to the \emph{root lattice} is factorable. 
Our third main result is a strengthening of this result. 

   \begin{theorem}
\label{thm:ABC-IA-intro}
The toric arrangement defined by an arbitrary ideal of a root system of type $A$, $B$ or $C$ with respect to the root lattice is inductive. 
\end{theorem}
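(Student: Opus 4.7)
The plan is to proceed by induction on the cardinality of the ideal $I$, building the toric arrangement one hypertorus at a time via the deletion-restriction machinery in Definition \ref{def:IA}. By Theorem \ref{thm:IP=IA-DP=DA}, inductiveness of the arrangement coincides with inductiveness of its poset of layers, so I may pass freely between arrangement-level and poset-level reasoning. The base case of a singleton ideal is trivial (a single hypertorus). For the inductive step, I would choose a root $\alpha \in I$ that is maximal in the root order; then $I' := I \setminus \{\alpha\}$ is again an ideal, so the toric arrangement of $I'$, which is the deletion of the hypertorus $T_\alpha$, is inductive by induction.

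The remaining ingredients are (i) inductiveness of the restricted arrangement to $T_\alpha$ and (ii) the divisibility of its characteristic polynomial into that of the full arrangement. For (i), the key observation is that $T_\alpha$ carries the structure of an abelian Lie group of the required form, and the connected components of the intersections $T_\beta \cap T_\alpha$ for $\beta \in I'$ are encoded by the roots whose behavior with respect to $\alpha$ produces new layers. In types $A$, $B$, $C$, these roots organize into a root subsystem of the same classical family of smaller rank, and the subset induced from $I$ remains an ideal in the inherited root order; hence the restricted toric arrangement is of the same form (an ideal toric arrangement of type $A$, $B$, or $C$ with respect to the root lattice) to which the induction hypothesis applies.

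For (ii), I would invoke the explicit factorization of the characteristic polynomial for ideal toric arrangements obtained via the characteristic quasi-polynomial techniques of \cite{T19}, which expresses $\chi(t)$ as a product of linear factors encoded by statistics on the ideal. Comparing the factorizations of the full and restricted arrangements reduces divisibility to a combinatorial statement about how these statistics transform under passing to the restricted ideal in the smaller root system; this can be verified directly for each of the three types.

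The main obstacle I expect lies in step (i) for types $B$ and $C$: the coexistence of short and long roots (and, in type $C$, of roots $\pm 2e_i$) forces careful lattice bookkeeping, because restricting to $T_\alpha$ involves a quotient of the ambient character lattice that can change which integral vectors represent the restricted hypertori. The task is to verify that this quotient lattice is still the root lattice of the restricted subsystem (or a lattice sufficiently compatible with it) so that the restricted arrangement genuinely falls within the class covered by the inductive hypothesis rather than in some mixed regime. Once this identification is secured uniformly across the three types, the induction closes and Theorem \ref{thm:ABC-IA-intro} follows.
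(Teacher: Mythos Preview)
Your overall strategy is sound for type $A$, but there is a genuine gap in step (i) for types $B$ and $C$: the restriction $\Ac^{T_\alpha}$ is \emph{not} in general an ideal toric arrangement with respect to the root lattice of a smaller root system, so your induction hypothesis does not apply to it. The paper states this explicitly at the start of the type $B$ discussion: ``The restriction of an ideal toric arrangement of type $B$ is in general not an ideal toric arrangement.'' Concretely, when you restrict, the quotient lattice you obtain need not be the root lattice of the residual root system; in type $C$, restricting $\Ac_{\Phi^+}(C_\ell)$ to the hypertorus of the highest root $2\epsilon_1$ yields the arrangement $\Ac_{T_{\Phi^+}}(C_{\ell-1})$ with respect to the \emph{integer} lattice, which has a different poset of layers than the root-lattice arrangement. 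In type $B$, restrictions produce arrangements that contain hypertori of the form $\{t_k=-1\}$, which do not come from any root in $\Phi^+(B_{\ell-1})$ via root-lattice coordinates. Since the restricted arrangement falls outside the class you are inducting over, both your step (i) and your step (ii) collapse: neither the inductive hypothesis nor the factorization from \cite{T19} covers it.

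The paper's remedy is to enlarge the class so that it is closed (enough) under restriction. For type $C$, it first proves that the integer-lattice ideal arrangements $\Ac_{T_\I}(C_\ell)$ are strictly supersolvable (hence inductive), and then runs the addition argument on the root-lattice side using that auxiliary result for the restriction. For type $B$, it introduces the \emph{extensions} $\I(p)$ (Lemma~\ref{lem:BT-IA} and Theorem~\ref{thm:BT-IA}), a one-parameter family that replaces certain $\epsilon_i$ by $2\epsilon_i$, and proves inductiveness for this larger family; the original ideal arrangement is recovered at $p=\ell+1$. In both cases the divisibility is not read off from \cite{T19} but is obtained structurally, via TM-ideals and Lemma~\ref{lem:Q<P}, together with a single application of Theorem~\ref{thm:IA-add} at the top. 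Your anticipated ``identification'' in the last paragraph cannot be secured as stated; what is actually needed is precisely this enlargement of the inductive class.
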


Finally, we give a discussion on the \emph{localization} at a layer of an abelian arrangement (Section \ref{sec:local}). 
It is shown that inductive freeness of a hyperplane arrangement is preserved under taking localization \cite{HRS17}. 
We show that it is not the case for an arbitrary abelian arrangement by providing an example of an inductive toric arrangement with a non-inductive localization. 
Furthermore, this example indicates a rather interesting phenomenon that changing the base group $G$ would turn a non-inductive arrangement into an inductive one -- there exists a finite set $\A$ of integral vectors whose corresponding  hyperplane arrangement $\Ac(\R)$ is not inductive but the toric arrangement  $ \Ac(\mathbb{S}^1)$ is. 

%********************************************************************************************************
\section{Preliminaries}

%******************************************************************************** 
\subsection{Posets}
\label{subsec:posets}

We begin by recalling the definitions and basic facts of (locally) geometric posets and (strictly) supersolvable posets following \cite{BD22}.

All posets $(\Pc, \le_\Pc)$ will be finite and have a unique minimal element $\hat 0$. 
All $\Pc$ will also be \emph{ranked} meaning that for every $x \in \Pc$, all maximal chains among those with $x$ as greatest element have the same length, denoted  $\mathrm{rk}(x)$. 
Define the \emph{rank} of a poset $\Pc$ to be
$$\rk(\Pc):=\max \{\rk(x) \mid x \in \Pc\}.$$

The \emph{M\"{o}bius function} $\mu:=\mu_\Pc$ of a poset $\Pc$ is the map $\mu_\Pc:\Pc \times \Pc \longrightarrow\Z$ defined by
$$\mu_\Pc(a,b):= 
 \begin{cases}
1 & \mbox{ if\,  $a= b$}, \\
-\sum_{a \le c <b}\mu_\Pc(a,c) & \mbox{ if\,  $a< b$}, \\
0 & \mbox{ otherwise}.
\end{cases}
$$

The \emph{characteristic polynomial} $\chi_\Pc(t)\in \mathbb{Z}[t]$ of $\Pc$ is defined as
\begin{equation*}
\chi_{\Pc}(t):= \sum_{x \in \Pc}\mu(\hat0, x)t^{\rk(\Pc)-\rk(x)}.
\end{equation*}

\begin{definition} 
\label{def:poset-exp}
A poset $\Pc$ is \emph{factorable} if the roots of its characteristic polynomial $\chi_{\Pc}(t)$ form a subset of positive integer roots.
In this case, we call the roots of $\chi_{\Pc}(t)$ the \emph{(combinatorial) exponents} of $\Pc$ and write
 $$\exp(\Pc) = \{d_1,\ldots,d_{\rk(\Pc)}\}$$
 for the multiset of exponents.
Denote by $\mathbf{FR}$ the class of factorable posets.
\end{definition} 

The trivial lattice $\{\hat0\}$ is factorable since $\chi_{\{\hat0\}} (t)=1$. 
In this case, $\exp(\{\hat0\}) =\emptyset$.

Let $\Pc$ and $ \Qc$ be posets. 
A \emph{poset morphism} $\sigma : \Pc \to \Qc$ is an order-preserving map, i.e., $x \le y$ implies $\sigma(x) \le \sigma(y)$ for all  $x, y \in \Pc$. 
We call $\sigma$ a \emph{poset isomorphism} if $\sigma$ is bijective and its inverse is a poset morphism. 
The posets $\Pc$ and $ \Qc$ are said to be \emph{isomorphic}, written $\Pc \simeq \Qc$ if there exists a poset isomorphism  $\sigma : \Pc \to \Qc$.

For a subset $T \subseteq \Pc$,  the \emph{join} $ \bigvee T$ (resp., \emph{meet} $\bigwedge T$) of $T$ is the set of minimal upper bounds  (resp., maximal lower bounds) of elements in $T$. That is,
$$ \bigvee T := \min \{  b \in \Pc \mid  b \ge a, \,\forall a \in T\} \quad  \mbox{and}  \quad \bigwedge T := \max \{  b \in \Pc \mid  b \le a,\, \forall a \in T\} .$$
In particular, when $ T = \{x, y\}$, we write $x \vee y :=  \bigvee T$ and $x \wedge y := \bigwedge T$.

For $x \in \Pc$, define
$$\Pc_{\le x} : = \{ y \in \Pc \mid y \le x\} \quad  \mbox{and}  \quad \Pc_{\ge x} : = \{ y \in \Pc \mid y \ge x\}.$$

We call $x \in \Pc$ an \emph{atom} if $\rk(x)=1$. 
Denote the set of atoms of $\Pc$ by $A(\Pc)$. 
For $x, y \in \Pc$, by $y$ \emph{covers} $x$, written $x \cov y$, we mean $x<y$ and $x\le z<y$ implies $x = z$.

The poset $\Pc$ is a \emph{lattice} if $|x \vee y| = 1$ and $| x \wedge y| = 1$ for any $x, y \in \Pc$. 
In this case by abuse of notation we write, e.g., $a = x \vee y$ for $a \in x \vee y$.

\begin{definition} 
\label{def:GL}
A lattice $L$ is called \emph{geometric} if for all $x, y \in L$:   $x \cov y$  if and only if there is an atom $a \in A(L)$ with $a \not\le x$, $y = x \vee a$.
\end{definition}

\begin{definition} 
\label{def:LGP}
A poset $\Pc$ is called \emph{locally geometric} if $\Pc_{\le x}$ is a geometric lattice for every $x \in \Pc$.
\end{definition}

\begin{remark} 
\label{rem:LGP}
If $\Pc$ is a locally geometric poset, then so are $\Pc_{\le x}$ and $\Pc_{\ge x}$ for any $x \in \Pc$
\cite[Remark 2.2.6]{BD22}.
\end{remark}

\begin{definition} 
\label{def:gen-subposet}
 For any subset $B \subseteq A(\Pc)$, define $\Pc(B)$ to be the poset consisting of the minimal element $\hat0$ and all possible joins of the elements in $B$. 
We call $\Pc(B)$ the \emph{subposet of $\Pc$ generated by $B$}.
\end{definition}

\begin{remark} 
\label{rem:generated}
Note that $\Pc(A(\Pc)) =\Pc$ and every element of $\Pc(B)$ is an element of $\Pc$. 
If $\Pc$ is a locally geometric poset (or a lattice), then so is $\Pc(B)$. 
\end{remark}

\begin{definition} 
\label{def:modular}
 An element $x$ in a geometric lattice $L$ is \emph{modular} if for all $z\le x$ and all $y\in L$:
 $$x \wedge (y \vee z) =  (x \wedge  y) \vee z .$$
\end{definition}

Let $\Pc$ be a locally geometric poset. 
An \emph{order ideal} in $\Pc$ is a downward-closed subset. 
The poset $\Pc$ (or an order ideal of $\Pc$) is  called \emph{pure}  if all maximal elements have the same rank. 
An order ideal $\Qc$ of $\Pc$ is \emph{join-closed} if $T \subseteq \Qc$ implies $\bigvee T  \subseteq \Qc$. 
We denote by $\max(\Pc)$ the set of maximal elements in $\Pc$.

 \begin{definition}[{\cite[Definitions 2.4.1 and 5.1.1]{BD22}}] 
\label{def:M-TM-ideal}
An \emph{M-ideal} of a locally geometric poset $\Pc$ is a pure, join-closed, order ideal $\Qc \subseteq \Pc$ satisfying the following two conditions:
\begin{enumerate}[(1)]
\item $|a\vee y|\ge1$ for any $y\in \Qc $ and $a \in A(\Pc)  \setminus A (\Qc)$,
\item  for every $x \in \max(\Pc)$, there is some $y \in \max(\Qc)$ such that $y$ is a modular element in the
geometric lattice $ \Pc_{\le x}$.
\end{enumerate}
An M-ideal $\Qc \subseteq \Pc$ is called a  \emph{TM-ideal} if condition (1) above is replaced by a stronger condition that such $a$ and $y$ have a unique minimal upper bound, i.e.,
\begin{enumerate}[(1*)]
\item $|a\vee y|=1$ for any $y\in \Qc $ and $a \in A(\Pc)  \setminus A (\Qc)$.
\end{enumerate}
\end{definition}

Note that the element $y$ in Definition \ref{def:M-TM-ideal}(2) is necessarily unique since $\Qc$ is  join-closed. 
The following is a generalization of Stanley's supersolvable lattices \cite{St72}. 

\begin{definition}[{\cite[Definitions 2.5.1 and 5.1.4]{BD22}}] 
\label{def:SSS}
A locally geometric poset $\Pc$ is \emph{supersolvable} (resp., \emph{strictly supersolvable)} if there is a chain, called  an \emph{M-chain} (resp., a \emph{TM-chain)}
$$\{\hat 0\} = \Qc_0 \subsetneq \Qc_1 \subsetneq \cdots \subsetneq  \Qc_r =\Pc,$$
where each $\Qc_i$ is an M-ideal  (resp., a TM-ideal)  of $\Qc_{i+1}$ with $\rk(\Qc_i) = i$. 
\end{definition}

   \begin{theorem}[{\cite[Theorem 5.2.1]{BD22}}] 
\label{thm:SSS-factor}
 Let $\Qc$ be a TM-ideal of a locally geometric poset $\Pc$ with $\rk(\Qc) = \rk(\Pc) - 1$, and let $d =|A(\Pc)  \setminus A (\Qc)|$. Then
$$\chi_{\Pc}(t)  = (t-d)\chi_{\Qc}(t).$$
In particular, if $\Pc$ is strictly supersolvable with a TM-chain $\{\hat 0\} = \Qc_0 \subsetneq \Qc_1 \subsetneq \cdots \subsetneq  \Qc_r =\Pc$, and $d_i=|A(\Qc_i)  \setminus A (\Qc_{i-1})|$ for each $i$, then $\Pc$ is factorable with exponents 
 $$\exp(\Pc) = \{d_1,\ldots,d_r\}.$$
\end{theorem}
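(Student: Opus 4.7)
The plan is to prove the factorization $\chi_\Pc(t)=(t-d)\chi_\Qc(t)$ by combining Stanley's classical modular-coatom factorization applied locally in each maximal interval $\Pc_{\le m}$ with a global M\"obius-function organization of $\Pc$ via a canonical projection onto $\Qc$, and then to iterate along the TM-chain for the exponent formula.

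First I would introduce $\pi\colon\Pc\to\Qc$, $\pi(x):=\bigvee\{a\in A(\Qc)\mid a\le x\}$, which is well defined and lies in $\Qc$ by join-closedness. It is the largest element of $\Qc$ below $x$, and $\pi$ restricts to the identity on $\Qc$. For $m\in\max(\Pc)$, condition~(2) of Definition~\ref{def:M-TM-ideal} supplies a modular element in $\max(\Qc)\cap\Pc_{\le m}$; combined with $\rk(\Qc)=\rk(\Pc)-1$, a rank argument forces this element to equal $\pi(m)$, so $\pi(m)$ is a modular coatom of the geometric lattice $\Pc_{\le m}$. Moreover, condition~(1*) guarantees that for every $y\in\Qc$ and every $a\in A(\Pc)\setminus A(\Qc)$ (which necessarily satisfies $a\not\le y$, since $\Qc$ is an order ideal), the join $a\vee y$ is a single element $\widetilde a_y\in\Pc$ that covers $y$.

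The core computation is to match characteristic polynomials via M\"obius arithmetic. For each $y\in\Qc$, I would compute the fiber contribution
\[
S_y(t):=\sum_{x\in\pi^{-1}(y)}\mu_\Pc(\hat0,x)\,t^{\rk(\Pc)-\rk(x)}
\]
and show $S_y(t)=\mu_\Qc(\hat0,y)(t-d)t^{\rk(\Qc)-\rk(y)}$. Fixing $y$, one observes that the elements $x\in\pi^{-1}(y)$ arise as joins (inside various $\Pc_{\le m}$) of $y$ with subsets built from atoms in $A(\Pc)\setminus A(\Qc)$. Applying Stanley's modular-coatom theorem inside each such $\Pc_{\le m}$, where $\pi(m)$ is the modular coatom, provides local factorizations; condition~(1*) is precisely what allows these local identities to be assembled, via an inclusion-exclusion comparison between $\mu_{\Pc_{\le m}}$ and $\mu_\Pc$, into the claimed closed form for $S_y(t)$. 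Summing over $y\in\Qc$ then produces $\chi_\Pc(t)=(t-d)\chi_\Qc(t)$, and the ``in particular'' clause follows by iterating along the TM-chain $\Qc_0\subsetneq\cdots\subsetneq\Qc_r=\Pc$: induction yields $\chi_{\Qc_i}(t)=\prod_{j=1}^i(t-d_j)$, hence $\chi_\Pc(t)=\prod_{i=1}^r(t-d_i)$ and $\exp(\Pc)=\{d_1,\dots,d_r\}$.

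The main obstacle is the global M\"obius assembly: elements of a fiber $\pi^{-1}(y)$ may lie in several distinct maximal intervals $\Pc_{\le m}$ (because $\Pc$ is only locally a geometric lattice), whereas Stanley's theorem concerns a single geometric lattice. The TM-conditions are tailored to bridge this gap --- (1*) makes the lifting $\widetilde a_y$ intrinsic to $\Pc$ and independent of the ambient $\Pc_{\le m}$, and (2) supplies a modular coatom in every maximal interval --- but verifying that the local M\"obius values combine into the closed form $\mu_\Qc(\hat0,y)(t-d)t^{\rk(\Qc)-\rk(y)}$, and that the constant $d$ appears uniformly across all fibers, is the delicate heart of the argument.
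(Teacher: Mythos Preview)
The paper does not give its own proof of this statement; it is quoted verbatim from \cite{BD22} as background (Theorem~\ref{thm:SSS-factor}). So there is no in-paper argument to compare against directly. That said, the paper independently develops tools that yield a short alternative proof: Lemma~\ref{lem:>=a} shows $\Qc\simeq\Pc_{\ge a}$ for every $a\in A(\Pc)\setminus A(\Qc)$, and the claim inside the proof of Lemma~\ref{lem:Q<P} shows $\Qc$ remains a TM-ideal of each intermediate $\Pc_i:=\Pc(A(\Qc)\cup\{a_1,\dots,a_i\})$ with $\rk(\Pc_i)=\rk(\Pc)$. Feeding this into the deletion--restriction formula (Theorem~\ref{thm:LGP-DC}) gives $\chi_{\Pc_1}(t)=t\chi_\Qc(t)-\chi_\Qc(t)=(t-1)\chi_\Qc(t)$ and then $\chi_{\Pc_i}(t)=\chi_{\Pc_{i-1}}(t)-\chi_\Qc(t)$ for $i\ge2$, whence $\chi_\Pc(t)=(t-d)\chi_\Qc(t)$ by a one-line induction. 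This route completely avoids M\"obius bookkeeping.

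Your fibering strategy is sound in spirit, but the way you set it up creates the very obstacle you then flag. You apply Stanley's modular-coatom factorization inside $\Pc_{\le m}$ for \emph{maximal} $m$ and then try to reassemble; since a fiber element can sit below several maximal $m$, this forces an awkward inclusion--exclusion that you leave unresolved. The fix is to localize one rank lower: by Proposition~\ref{prop:x'}, for every $x\in\Pc\setminus\Qc$ the element $x'=\pi(x)$ is a modular coatom of the geometric lattice $\Pc_{\le x}$, and the atoms of $\Pc_{\le x}$ not below $x'$ are exactly those $a\in A(\Pc)\setminus A(\Qc)$ with $\widetilde a_{x'}=x$. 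Hence every $x\in\pi^{-1}(y)\setminus\{y\}$ covers $y$, and Stanley's formula in $\Pc_{\le x}$ gives $\mu_\Pc(\hat0,x)=-c_x\,\mu_\Qc(\hat0,y)$ where $c_x=|\{a:\widetilde a_y=x\}|$. Summing over the (distinct) $x$ in the fiber yields $\sum_x c_x=d$ and hence $S_y(t)=\mu_\Qc(\hat0,y)(t-d)t^{\rk(\Qc)-\rk(y)}$ with no assembly problem at all. In short, your plan works once you apply Stanley in $\Pc_{\le x}$ rather than in $\Pc_{\le m}$.
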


\begin{definition} 
\label{def:LSS}
A locally geometric poset $\Pc$ is \emph{locally supersolvable} if $\Pc_{\le x}$ is supersolvable  for every $x \in \Pc$.
\end{definition}

\begin{remark} 
\label{rem:S}
Denote by $\mathbf{SSS}$, $\mathbf{SS}$ and $\mathbf{LSS}$ the class of strictly supersolvable, supersolvable and locally supersolvable posets, repecstively. 
By \cite[Remark 2.5.4 and Example 5.2.5]{BD22},
 $$\mathbf{SSS} \subsetneq \mathbf{SS}  \subsetneq \mathbf{LSS}.$$
 Moreover, if $L$ is a geometric lattice, then $L \in \mathbf{SSS}$ if and only if $L\in \mathbf{SS}$ \cite[Proposition 5.1.9]{BD22}.
\end{remark}

\begin{definition}[{\cite[Definition 4.1.1]{BD22}}] 
\label{def:GP}
A locally geometric poset $\Pc$ is \emph{geometric} if for all $x, y \in \Pc$:
 if $\rk(x)<\rk(y)$ and $ I \subseteq A(\Pc)$ is such that $y \in \bigvee I $ and $|I|=\rk(y)$, then there is $a \in I$ such that $a \not\le x$ and $a \vee x \ne \emptyset$.
\end{definition}

When a poset is geometric, we have the following useful characterization of an M-ideal.
 \begin{lemma}[{\cite[Theorem 4.1.2]{BD22}}] 
\label{lem:GPMI}
Let $\Pc$ be a geometric poset, and let $\Qc$ be a pure, join-closed, proper order ideal of $\Pc$.
Then $\Qc$ is an M-ideal with $\rk(\Qc) = \rk(\Pc) - 1$ if and only if for any two distinct $a_1,a_2 \in A(\Pc)\setminus A(\Qc)$ and every $x \in a_1\vee a_2$ there exists $a_3 \in A(\Qc)$ such that $x>a_3$.
 \end{lemma}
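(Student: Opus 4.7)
The plan is to carry out both implications inside the geometric lattices $\Pc_{\le x}$ for $x \in \max(\Pc)$, using the classical characterization of modular coatoms in a geometric lattice $L$: a coatom $y \in L$ is modular if and only if for any two distinct atoms $a_1, a_2 \in A(L)$ with $a_1, a_2 \not\le y$, there is an atom $a_3 \le y$ satisfying $a_3 \le a_1 \vee a_2$. This is a standard consequence of the rank identity characterizing modular elements of a geometric lattice.

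For the forward direction, I would take distinct $a_1, a_2 \in A(\Pc) \setminus A(\Qc)$ and $x \in a_1 \vee a_2$, and pick any $m \in \max(\Pc)$ with $m \ge x$. Condition (2) of Definition \ref{def:M-TM-ideal} yields $y \in \max(\Qc)$ that is modular in $\Pc_{\le m}$; the rank hypothesis $\rk(\Qc) = \rk(\Pc) - 1$ makes $y$ a coatom of $\Pc_{\le m}$, and the inclusion $\Pc_{\le y} \subseteq \Qc$ forces $a_1, a_2 \not\le y$. The classical characterization then delivers an atom $a_3 \le y$ with $a_3 \le x$; since $\Qc$ is an order ideal, $a_3 \in A(\Qc)$.

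For the backward direction, for each $x \in \max(\Pc)$ I would set
\[
y_x := \bigvee\bigl( A(\Qc) \cap A(\Pc_{\le x}) \bigr)
\]
computed in the geometric lattice $\Pc_{\le x}$, and prove that $y_x$ is a modular coatom of $\Pc_{\le x}$ lying in $\max(\Qc)$. The key step is that $y_x$ is a coatom: if not, the geometric interval $[y_x, x]$ has rank $\ge 2$, and hence contains two distinct atoms $a_1 \vee y_x \ne a_2 \vee y_x$ arising from distinct $a_1, a_2 \in A(\Pc_{\le x}) \setminus A(\Qc)$ (any atom $\le y_x$ lies in $A(\Qc)$ because $\Qc$ is an order ideal); the hypothesis produces $a_3 \in A(\Qc)$ with $a_3 \le a_1 \vee a_2$, so $a_3 \le y_x$; in the rank-$2$ geometric lattice $[\hat 0, a_1 \vee a_2]$ any two distinct atoms span the top, so $a_1 \vee a_3 = a_1 \vee a_2$, whence $a_2 \le a_1 \vee a_3 \le a_1 \vee y_x$ and symmetrically $a_1 \le a_2 \vee y_x$, giving $a_1 \vee y_x = a_2 \vee y_x$, a contradiction. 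Once $y_x$ is known to be a coatom, modularity is immediate from the hypothesis together with the classical characterization. Join-closedness places $y_x$ in $\Qc$, and purity together with $\rk(y_x) = \rk(x) - 1$ forces $\rk(\Qc) = \rk(\Pc) - 1$ and $y_x \in \max(\Qc)$. Condition (1) of Definition \ref{def:M-TM-ideal} is then teased out from the geometricity of $\Pc$ (Definition \ref{def:GP}), which supplies the common upper bounds that local geometricity alone would not guarantee.

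The main obstacle is the coatom step in the backward direction: the hypothesis is an essentially $2$-dimensional statement about joins of pairs of non-$\Qc$ atoms, and parlaying it into the global rank equality $\rk(y_x) = \rk(x) - 1$ that activates the modular-coatom characterization is the delicate part. A secondary subtlety is the verification of condition (1) of Definition \ref{def:M-TM-ideal}, where one must invoke the global geometricity of $\Pc$ rather than mere local geometricity to produce the required common upper bounds.
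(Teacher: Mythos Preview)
The paper does not prove Lemma~\ref{lem:GPMI}: it is quoted from \cite[Theorem 4.1.2]{BD22} without argument and used only as a black box (in the proofs of Theorem~\ref{thm:SS=SS} and Theorem~\ref{cor:BST-NSS}). There is therefore no in-paper proof to compare your proposal against.

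For what it is worth, your plan is the natural one. The forward direction via the classical modular-coatom criterion is routine once one notes (via Lemma~\ref{lem:pure}) that $\Pc$ is pure, so that the chosen $y \in \max(\Qc)$ really is a coatom of $\Pc_{\le m}$. In the backward direction your construction $y_x = \bigvee\bigl(A(\Qc) \cap A(\Pc_{\le x})\bigr)$ and the rank-$2$ argument forcing $y_x$ to be a coatom are correct, and you have already located the places where geometricity of $\Pc$ (rather than mere local geometricity) must be invoked: not only to obtain condition~(1) of Definition~\ref{def:M-TM-ideal}, but earlier as well, to guarantee that $\Pc$ is pure and that every $x \in \max(\Pc)$ has at least one non-$\Qc$ atom beneath it (so that $y_x \ne x$ and the rank drops by exactly one). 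Without the global geometric hypothesis the backward implication genuinely fails, so your flagging of this as the ``secondary subtlety'' is apt.
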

 
%******************************************************************************** 
\subsection{Free arrangements}
\label{subsec:free}
Now we recall the definition of free arrangements and their related properties. Our standard reference is \cite{OT92}. 
Throughout this subsection, an ``arrangement" means a ``central hyperplane arrangement".

Let $ \mathbb{K} $ be a field and let $T=  \mathbb{K}^{\ell} $. 
Let  $ \Hc $ be an arrangement in $T$. 
Let $ L(\Hc) $ be the intersection poset of $ \Hc $. 
We agree that $ T $ is a unique minimal element in $ L(\Hc) $. 
Thus $ L(\Hc) $ is a geometric lattice which can be equipped with the rank function $ \rk(X) :=\operatorname{codim}(X) $ for $X \in L(\Hc)$ (e.g., \cite[Lemma 2.3]{OT92}). 
We also define the \emph{rank} $\rk(\Hc)$ of $\Hc$ as the rank of the maximal element of $L(\Hc)$.

The \emph{characteristic polynomial} $ \chi_{\Hc}(t)  $ of $ \Hc $ is defined by
\begin{align*}
\chi_{\Hc}(t) :=t^{\ell - \rk(\Hc)} \cdot \chi_{L(\Hc)}(t), 
\end{align*}
where $ \chi_{L(\Hc)}(t)$ is the characteristic polynomial of the lattice $L(\Hc)$ defined in the preceding subsection. 
Definition \ref{def:poset-exp} motivates the following concept. 

\begin{definition} 
\label{def:hyparr-exp}
An arrangement $\Hc$ is called \emph{factorable} if its intersection poset $L(\Hc)$ is factorable (Definition \ref{def:poset-exp}).
In this case, we also call the roots of $ \chi_{\Hc}(t)  $ the \emph{(combinatorial) exponents} of $\Hc$ and use the notation $\exp(\Hc)$ to denote the multiset of exponents. 
Denote also by $\mathbf{FR}$ the class of factorable arrangements.
\end{definition} 

\vn
\textbf{Notation.}
 If an element $e$ appears $d\ge0$ times  in a multiset $M$, we  write $e^d \in M$. 
 
If $\Hc \in \mathbf{FR}$, then 
$$ \exp(\Hc) = \{0^{\ell - \rk(\Hc)} \} \cup \exp(L(\Hc)) .$$

The \emph{empty arrangement} $\varnothing_\ell$ (or simply $\varnothing$) is the arrangement in $T$ consisting of no elements. 
 In particular, $ \varnothing_\ell\in \mathbf{FR}$ with $ \exp(\varnothing_\ell) = \{0^{\ell} \}.$

Let $ \{x_{1}, \dots, x_{\ell}\} $ be a basis for the dual space $T^* $ and let $ S :=\mathbb{K}[x_{1}, \dots, x_{\ell}] $. 
For each $H\in \Hc$, fix a \emph{defining polynomial} $ \alpha_H=a_1x_1+\cdots+a_\ell x_\ell \in T^*$  $(a_i \in \mathbb{K})$ of  $H$, i.e., $H = \ker \alpha_H$.

A $\mathbb{K}$-linear map $\theta:S\to S$ is called a \emph{derivation} if $\theta(fg) = \theta(f)g + f\theta(g)$ for all $f,g \in S$.
Let $\Der(S)$ be the set of all derivations of $S$. 
It is a free $S$-module with a basis $\{\partial/\partial x_1,\ldots,\partial/\partial x_{\ell}\}$ consisting of the usual partial derivatives.
We say that a nonzero derivation $\theta  = \sum_{i=1}^\ell f_i \partial/\partial x_{i}$  is \emph{homogeneous of degree} $p$ if each nonzero coefficient $f_i$ is a homogeneous polynomial of degree $p$ \cite[Definition 4.2]{OT92}.

The concept of free arrangements was defined by Terao  \cite{T80, OT92}. 
\begin{definition}[{\cite[Definitions 4.5 and 4.15]{OT92}}]\label{def:free-arr}
The \emph{module $D(\Hc) $ of logarithmic derivations}  is defined by 
	\begin{equation*}
	D(\Hc) := \{ \theta \in \Der(S) \mid \theta( \alpha_H) \in  \alpha_HS \mbox{ for all } H\in \Hc\}.
	\end{equation*}
	We say that $\Hc$ is \emph{free} if the module $D(\Hc)$ is a free $S$-module. 
	Denote by $\mathbf{F}$ the class of free arrangements.
\end{definition}

If $\Hc\in \mathbf{F}$, we may choose a basis $\{\theta_1, \ldots, \theta_\ell\}$ consisting of homogeneous derivations for $D(\Hc)$ \cite[Proposition 4.18]{OT92}. 
Although a basis is not unique, the degrees of the derivations in a basis are uniquely determined by $\Hc$ \cite[Proposition A.24]{OT92}.  

The following theorem of Terao connects algebraic and combinatorial properties of an arrangement.

\begin{theorem}[{\cite[Main Theorem]{T81}}, {\cite[Theorem 4.137]{OT92}}]\label{thm:Factorization}
If $\Hc$ is free, then $\Hc$ is factorable with combinatorial exponents given by the degrees of the elements in any basis for $D(\Hc)$. 
\end{theorem}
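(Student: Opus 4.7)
The plan is to reproduce Terao's original argument from \cite{T81}, which combines the Solomon-Terao evaluation of $\chi_\Hc(t)$ with a Hilbert-series computation that becomes particularly clean under the freeness hypothesis. The target identity is $\chi_\Hc(t) = \prod_{i=1}^\ell (t-d_i)$, where $d_i := \deg\theta_i$ for any homogeneous basis $\{\theta_1,\ldots,\theta_\ell\}$ of $D(\Hc)$; this simultaneously yields factorability and identifies the combinatorial exponents with the algebraic degrees.

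First, I would extract the algebraic input. Because $D(\Hc)$ is a finitely generated $\Z$-graded free $S$-module of rank $\ell$, it admits a homogeneous basis $\{\theta_1,\ldots,\theta_\ell\}$, so that $D(\Hc) \cong \bigoplus_{i=1}^\ell S(-d_i)$ as graded modules. Consequently, each exterior power $D^p(\Hc) := \bigwedge^p_S D(\Hc)$ is again free with an explicit product basis indexed by $p$-subsets of $\{1,\ldots,\ell\}$, whence
\begin{equation*}
\mathrm{Hilb}(D^p(\Hc);x) = \frac{e_p(x^{d_1},\ldots,x^{d_\ell})}{(1-x)^\ell},
\end{equation*}
where $e_p$ denotes the $p$-th elementary symmetric polynomial.

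Second, I would invoke the Solomon-Terao theorem, which expresses $\chi_\Hc(t)$ as a limit at $x=1$ of an alternating combination of the $\mathrm{Hilb}(D^p(\Hc);x)$. Substituting the free-module Hilbert series, the generating-function identity $\sum_p e_p(y_1,\ldots,y_\ell) s^p = \prod_i (1+y_i s)$ collapses the sum into a single product indexed by $i = 1,\ldots,\ell$, divided by $(1-x)^\ell$. The key observation is that $1-x^{d_i}=(1-x)(1+x+\cdots+x^{d_i-1})$, so each of the $\ell$ factors of the product contributes exactly one copy of $(1-x)$, cancelling the denominator; letting $x \to 1$ then sends each residual bracket to a linear expression in $t$, and with the correct Solomon-Terao sign these assemble into $\prod_{i=1}^\ell (t-d_i)$.

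The main obstacle is the careful sign and limit bookkeeping in the Solomon-Terao formula, ensuring the final product emerges as $\prod (t-d_i)$ rather than $\prod (t+d_i)$; a sanity check against, for instance, the Boolean arrangement (where $d_i = 1$ for all $i$ and $\chi_\Hc(t) = (t-1)^\ell$) uniquely fixes the sign convention. Beyond this, everything reduces to an elementary symmetric-function manipulation, and the conclusion $\Hc \in \mathbf{FR}$ with $\exp(\Hc) = \{d_1,\ldots,d_\ell\}$ follows at once.
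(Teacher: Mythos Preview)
The paper does not give its own proof of this theorem; it is quoted verbatim from \cite{T81} and \cite[Theorem 4.137]{OT92} and used as a black box. So there is nothing in the paper to compare your argument against.

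As for the argument itself: the overall strategy---compute the Hilbert series of the exterior powers $D^p(\Hc)$ using freeness, feed these into a Solomon--Terao type formula, and collapse the result via the generating identity for elementary symmetric polynomials---is a standard and correct route to the factorization theorem. Two remarks are in order. First, your opening sentence is historically off: the Solomon--Terao formula you are invoking dates from 1987, so it cannot be ``Terao's original argument from \cite{T81}''; the 1981 proof proceeds differently (via induction and the structure of $D(\Hc)$ under addition--deletion, together with Saito's criterion). Second, your sketch is deliberately vague at the crucial step---you never state the Solomon--Terao identity you are using, and ``the correct Solomon--Terao sign'' together with a sanity check against the Boolean arrangement is not a substitute for actually writing down the formula and carrying out the limit. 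If you intend this as a genuine proof rather than a pointer to the literature, you should state the Solomon--Terao formula explicitly (with its sign conventions) and perform the substitution and $x\to 1$ limit in full; otherwise the argument is circular, since the Solomon--Terao formula is itself a nontrivial theorem of comparable depth.
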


Based on this, Terao conjectured that  freeness is a  combinatorial property \cite[Conjecture 4.138]{OT92}. 
Although Terao's conjecture is still open, there are some subclasses of free arrangements that are known to be combinatorially determined. 

\begin{definition} 
\label{def:SS-arr}
An arrangement $\Hc$ is called \emph{supersolvable} if its intersection lattice $L(\Hc)$ is supersolvable (Definition \ref{def:SSS}).
Denote also by $\mathbf{SS}$ the class of supersolvable ($=$ strictly supersolvable) central hyperplane arrangements.
\end{definition}

Fix $H\in\Hc$, define the \emph{deletion} $\Hc':=\Hc \setminus \{H\}$ and \emph{restriction} $\Hc'' := \{ H \cap K \mid K \in \Hc'\}.$ 
Then $\Hc'$ is an arrangement in $V$, and $\Hc''$ is an arrangement in $H \simeq\mathbb{K}^{\ell-1} $.
 
\begin{definition}[{\cite[Definition 4.53]{OT92}}] 
\label{def:IF-arr}
The class $\mathbf{IF}$ of \emph{inductively free arrangements} is the smallest class of arrangements which satisfies
\begin{enumerate}[(1)]
\item $\varnothing_\ell\in\mathbf{IF}$ for $\ell \ge 1$,
\item $\Hc \in \mathbf{IF}$ if there exists $H \in \Hc$ such that $\Hc'' \in \mathbf{IF}$, $\Hc' \in \mathbf{IF}$, and $\chi_{\Hc''}(t) $ divides $\chi_{\Hc'}(t)$.
\end{enumerate}
\end{definition}

\begin{definition}[{\cite[Theorem--Definition 4.3]{Abe16}}] 
\label{def:DF-arr}
The class $\mathbf{DF}$ of \emph{divisionally free arrangements} is the smallest class of  arrangements which satisfies
\begin{enumerate}[(1)]
\item $\varnothing_\ell\in\mathbf{DF}$ for $\ell \ge 1$,
\item $\Hc \in \mathbf{DF}$ if there exists $H \in \Hc$ such that $\Hc'' \in \mathbf{DF}$ and $\chi_{\Hc''}(t) $ divides $\chi_{\Hc}(t)$.
\end{enumerate}
\end{definition}

\begin{remark} 
\label{rem:all-property}
Supersolvability, inductive and divisional freeness of central hyperplane arrangements all are combinatorial properties. 
We give below the relation between the concepts we have defined so far:
$$ \tbf{SSS} = \tbf{SS}  \subsetneq   \tbf{IF} \subsetneq \tbf{DF}  \subsetneq \tbf{F}  \subsetneq \tbf{FR}.$$

The first containment is proved by Jambu and Terao \cite[Theorem 4.2]{JT84}. 
The arrangement of a root system of type $D_\ell$ for $\ell \ge 4$ belongs to $ \tbf{IF} \setminus \tbf{SS}$ (e.g.,  \cite[Theorem 6.6]{Hul16}). 
The second  containment follows from the deletion-restriction formula  $\chi_{\Hc}(t)=
\chi_{\Hc'}(t)-\chi_{\Hc''}(t)$  (e.g., \cite[Theorem 2.56]{OT92}). 
The arrangement defined by the exceptional complex reflection group of type $G_{31}$ is known to be divisionally free \cite[Theorem 1.6]{Abe16} but not inductively free \cite[Theorem 1.1]{HR15}. 
The third containment is proved by Abe \cite[Theorem 1.1]{Abe16}. 
The \emph{intermediate arrangement} $\A^0_\ell(r)$ for $\ell \ge 3$, $r \ge 3$ in \cite[Theorem 5.6]{Abe16} is an example of an arrangement in $ \tbf{F} \setminus \tbf{DF}$. 
The fourth containment is Theorem \ref{thm:Factorization} by Terao. 
There are many examples of factorable but not free arrangement, e.g., \cite[3.6]{FR86}.

\end{remark}

%********************************************************************************************************
\section{Inductive and divisional posets} 
\label{sec:ind-poset}
From now on unless otherwise stated, we will assume that $\Pc$ is a locally geometric poset, and set $A= A(\Pc)$ and $r =\rk(\Pc)$.

\begin{definition}
\label{def:triple-posets}
Fix an atom $a\in A$. Let $\Pc' :=\Pc(A\setminus\{a\})$ be the subposet of $\Pc$ generated by $A\setminus\{a\}$ and define $\Pc'':=\Pc_{\ge a}$. 
We call $(\Pc, \Pc', \Pc'')$ the triple of posets with distinguished atom $a$.
\end{definition}

\begin{remark} 
\label{rem:separator}
Note that for each $a\in A$, we have $\rk(\Pc) = \rk(\Pc')+\epsilon(a)$, where $\epsilon(a)$ is either $0$ or $1$. 
Indeed, let $x \in \max(\Pc)$ so that $\rk(x)=r$. If $a \not\le x$ then $ \rk(\Pc')=r$. 
Otherwise, set $\Qc := \Pc_{\le x}$ then $a \in A(\Qc)$. 
Let $(\Qc, \Qc', \Qc'')$ the triple of posets with distinguished atom $a$. 
Since $\Qc$ is a geometric lattice with $\rk(\Qc) =r$, it follows that  $\rk(\Qc') \le r \le \rk(\Qc')+1$.
Note that $\Qc'$ is a subposet of $\Pc'$. 
Then $r \ge \rk(\Pc') \ge \rk(\Qc') \ge r-1$, as desired. 

We call $a\in A$ a \emph{separator} of $\Pc$ if $\epsilon(a) =1$. 
\end{remark}

For each $x \in \Pc$, define
$$A_x := \{ a \in A \mid a \le x\}.$$

 \begin{lemma}[{\cite[Lemma 2.35]{OT92}}] 
\label{lem:crosscut}
Let $\Pc$ be a geometric lattice. 
For $x, y \in \Pc$ with $x \le y$, let $S(x,y)$ be the set of all subsets $B \subseteq A$ such that $A_x \subseteq B$ and $\max(\Pc(B))=y$. 
Then 
$$\mu(x,y) = \sum_{B \in S(x,y)} (-1)^{|B \setminus A_x|}.$$
 \end{lemma}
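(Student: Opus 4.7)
The plan is to verify that the right-hand side, viewed as a function of the pair $x \le y$, satisfies the defining recursion of the Möbius function on the interval $[x,y]$, and hence equals $\mu(x,y)$. Set
\[
f(x,y) := \sum_{B \in S(x,y)} (-1)^{|B \setminus A_x|}.
\]
Since $\Pc$ is a lattice, $\max(\Pc(B))$ is the single element $\bigvee B$, so membership in $S(x,y)$ simply means $A_x \subseteq B \subseteq A$ together with $\bigvee B = y$. I will show $f(x,x) = 1$ and $\sum_{x \le z \le y} f(x,z) = 0$ for all $y > x$; the Möbius function is uniquely determined by these two relations, so this forces $f = \mu$ by induction on $\rk(y) - \rk(x)$.

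For the base case $y = x$, any $B$ with $A_x \subseteq B$ and $\bigvee B = x$ must equal $A_x$: each atom $a \in B$ satisfies $a \le \bigvee B = x$, hence lies in $A_x$. Therefore $S(x,x) = \{A_x\}$ and $f(x,x) = 1$.

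For $y > x$, I would swap the order of summation:
\[
\sum_{x \le z \le y} f(x,z) \;=\; \sum_{\substack{B \subseteq A \\ A_x \subseteq B,\, \bigvee B \le y}} (-1)^{|B \setminus A_x|}.
\]
In any lattice, $\bigvee B \le y$ is equivalent to $B \subseteq A_y$; substituting $C = B \setminus A_x$ then rewrites the right-hand side as $\sum_{C \subseteq A_y \setminus A_x} (-1)^{|C|} = (1-1)^{|A_y \setminus A_x|}$, which vanishes whenever $A_y \ne A_x$. The geometric hypothesis enters decisively here: every element of a geometric lattice is the join of the atoms below it, so $A_y = A_x$ would force $y = \bigvee A_y = \bigvee A_x = x$, contradicting $y > x$. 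The only delicate step in the whole argument is the observation that, once one sums over $z$, the generation condition $\max(\Pc(B)) = z$ collapses to the simple containment $B \subseteq A_y$; the remainder is a routine inclusion–exclusion cancellation that uses the geometric lattice hypothesis at this one point.
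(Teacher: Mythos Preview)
The paper does not supply its own proof of this lemma; it merely cites Orlik--Terao. Your argument is correct and is essentially the standard one: verify that the right-hand side obeys the defining recursion of the M\"obius function, using atomicity of geometric lattices at the single step where the hypothesis is genuinely needed.
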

 
  \begin{lemma} 
\label{lem:SAIS}
Let $\Pc$ be a locally geometric poset. Then the characteristic polynomial $\chi_{\Pc}(t)$ strictly alternates in sign, i.e., if
$$\chi_{\Pc}(t)=c_rt^r +c_{r-1}t^{r-1}+\cdots+c_0, $$
then $(-1)^{r-i}c_i >0$ for $0\le i \le r$.
 \end{lemma}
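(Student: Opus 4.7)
The plan is to reduce everything to the classical theorem of Rota that in a geometric lattice $L$ of rank $k$ one has $(-1)^k \mu_L(\hat 0, \hat 1) > 0$. First I would unpack the definition of the characteristic polynomial: grouping terms by rank,
$$c_i = \sum_{\substack{x \in \Pc \\ \rk(x) = r - i}} \mu_{\Pc}(\hat 0, x).$$
Since the M\"obius function of a poset depends only on the interval in which it is evaluated, $\mu_{\Pc}(\hat 0, x) = \mu_{\Pc_{\le x}}(\hat 0, x)$, and the locally geometric hypothesis ensures each $\Pc_{\le x}$ is a geometric lattice of rank $\rk(x)$. Applying Rota's sign theorem to each such interval yields
$$(-1)^{\rk(x)} \mu_{\Pc}(\hat 0, x) > 0 \quad \text{for every } x \in \Pc.$$

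Second, to promote strict positivity of each summand to strict positivity of $(-1)^{r-i} c_i$, I need the index set $\{x \in \Pc : \rk(x) = r - i\}$ to be nonempty for every $i \in \{0,1,\ldots,r\}$. This is immediate: pick any $y \in \max(\Pc)$, so that $\rk(y) = r$; then $\Pc_{\le y}$ is a geometric lattice of rank $r$, and any maximal chain in $\Pc_{\le y}$ produces an element of each rank between $0$ and $r$, which is automatically an element of $\Pc$. Combining the two observations,
$$(-1)^{r-i} c_i = \sum_{\substack{x \in \Pc \\ \rk(x) = r - i}} (-1)^{\rk(x)} \mu_{\Pc}(\hat 0, x) > 0,$$
since each summand is strictly positive and the sum is nonempty.

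The only non-trivial ingredient is Rota's sign theorem for geometric lattices. This is entirely classical and could simply be cited (e.g., from Stanley's \emph{Enumerative Combinatorics}); alternatively, it can be reproved from the crosscut formula (Lemma \ref{lem:crosscut}) via the standard no-broken-circuit argument, which identifies $(-1)^k \mu_L(\hat 0, \hat 1)$ with a positive count of NBC bases of the underlying matroid. This is the main (and essentially only) obstacle in the argument, but it is well-established.
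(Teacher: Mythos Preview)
Your proof is correct and follows essentially the same approach as the paper's: both reduce to the classical sign result $(-1)^{\rk(x)}\mu(\hat0,x)>0$ for geometric lattices and then sum over elements of each fixed rank. One minor wording fix: since a locally geometric poset need not be pure, you should pick some $y$ with $\rk(y)=r$ (such a $y$ exists by the definition of $\rk(\Pc)$) rather than an arbitrary $y\in\max(\Pc)$; with that adjustment your non-emptiness check goes through and in fact supplies a detail the paper's proof leaves implicit.
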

 
   \begin{proof}
By definition, for each $0\le i \le r$ we have
$$(-1)^{r-i}c_i = \sum_{\rk(x) \,=\, r-i} (-1)^{r- i} \mu(\hat0, x).$$
Note that  the characteristic polynomial of  a geometric lattice strictly alternates in sign (e.g., \cite[Corollary 3.5]{St07}). 
Thus $(-1)^{\rk(x)} \mu(\hat0, x) >0$ since $\Pc_{\le x}$ is a geometric lattice  for every $x \in \Pc$. 
Hence $(-1)^{r-i}c_i >0$ for each $0\le i \le r$.
\end{proof}
 
We show below that the characteristic polynomials of locally geometric posets satisfy a deletion-restriction recurrence, which is crucial for our subsequent discussion. 
This formula is already proved for geometric lattices, e.g., see \cite[Theorem 1.2.20]{Bra92}. 
The method therein can be readily extended to locally geometric posets, we include here a proof for the sake of completeness. 
   \begin{theorem}
\label{thm:LGP-DC}
Let $\Pc$ be a locally geometric poset and fix  $a\in A$. 
Then 
\begin{equation*}
\chi_{\Pc}(t)=
t^{\epsilon(a)}\cdot\chi_{\Pc'}(t)-
\chi_{\Pc''}(t). 
\end{equation*}
Here $\epsilon(a) = \rk(\Pc) - \rk(\Pc')$ is either $0$ or $1$ by Remark \ref{rem:separator}. 
\end{theorem}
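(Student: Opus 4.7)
The plan is to mirror the classical proof for geometric lattices \cite[Theorem 1.2.20]{Bra92}, exploiting the fact that each interval $\Pc_{\le x}$ is itself a geometric lattice.

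First I would partition the defining sum
\begin{equation*}
\chi_\Pc(t) \;=\; \sum_{x \in \Pc} \mu_\Pc(\hat 0, x)\, t^{r - \rk(x)}
\end{equation*}
by whether $a \le x$. For $x$ with $a \not\le x$, every atom of the geometric lattice $\Pc_{\le x}$ lies in $A \setminus \{a\}$, so $x$ is already in $\Pc'$ and the intervals $\Pc_{\le x}$ and $\Pc'_{\le x}$ coincide; in particular $\mu_\Pc(\hat 0, x) = \mu_{\Pc'}(\hat 0, x)$ and $\rk_\Pc(x) = \rk_{\Pc'}(x)$. Using the identity $r = \rk(\Pc') + \epsilon(a)$ from Remark~\ref{rem:separator}, the ``$a \not\le x$'' piece of $\chi_\Pc(t)$ can be rewritten as $t^{\epsilon(a)}\chi_{\Pc'}(t)$ minus the contributions to $\chi_{\Pc'}(t)$ coming from elements of $\Pc'$ that happen to lie above $a$. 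Consequently the theorem reduces to verifying the key identity
\begin{equation*}
\chi_{\Pc''}(t) \;=\; \sum_{\substack{x \in \Pc' \\ a \le x}} \mu_{\Pc'}(\hat 0, x)\, t^{r - \rk_{\Pc'}(x)} \;-\; \sum_{x \in \Pc''} \mu_\Pc(\hat 0, x)\, t^{r - \rk_\Pc(x)}.
\end{equation*}

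I would establish this identity fiberwise, by expanding each Möbius value via the crosscut formula (Lemma~\ref{lem:crosscut}) inside the geometric lattice $\Pc_{\le x}$ for each relevant $x$. For $x \ge a$, subsets $B \subseteq A_x$ whose join in $\Pc_{\le x}$ equals $x$ split into those containing $a$ and those contained in $A_x \setminus \{a\}$. The former encode, via passage to the interval $[a,x]$, the value $\mu_{[a,x]}(a,x) = \mu_{\Pc''}(a,x)$; the latter encode a ``deletion'' Möbius value which agrees with $\mu_{\Pc'}(\hat 0, x)$ whenever $x \in \Pc'$, since $\Pc'_{\le x}$ is precisely the join-closure of $A_x \setminus \{a\}$ inside $\Pc_{\le x}$. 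This is exactly the classical deletion-contraction decomposition for the geometric lattice $\Pc_{\le x}$ at the atom $a$. Summing these fiberwise identities weighted by $t^{r - \rk(x)}$ over $x \in \Pc''$, and using $\rk_{\Pc''}(x) = \rk_\Pc(x) - 1$ together with the definition of $\chi_{\Pc''}(t)$ to align the powers of $t$, produces the desired identity.

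The main obstacle is the careful treatment of elements $x \in \Pc'$ that also lie above $a$: for such $x$ the sublattice $\Pc'_{\le x}$ is in general strictly smaller than $\Pc_{\le x}$, and the two rank functions $\rk_{\Pc'}(x)$ and $\rk_\Pc(x)$ need not agree. Ensuring that the fiberwise deletion-contraction identities aggregate consistently and that all rank discrepancies are precisely absorbed into the global $t^{\epsilon(a)}$ prefactor is the delicate bookkeeping step that extends the classical geometric-lattice argument to the present locally geometric setting.
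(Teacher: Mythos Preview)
Your approach is correct and rests on the same key tool as the paper (the crosscut formula, Lemma~\ref{lem:crosscut}), but the paper organizes the computation more directly and thereby avoids the ``main obstacle'' you flag. Rather than first splitting the sum defining $\chi_\Pc(t)$ according to whether $a \le x$, the paper immediately expands each $\mu_\Pc(\hat 0, x)$ via crosscut into a signed sum over subsets $B \subseteq A_x$ with $\bigvee B = x$, and then splits the resulting double sum according to whether $a \in B$. The $a \notin B$ terms assemble directly into $t^{\epsilon(a)}\chi_{\Pc'}(t)$ (since such $B$ are exactly the atom-subsets indexing elements of $\Pc'$), and the $a \in B$ terms assemble into $-\chi_{\Pc''}(t)$ via crosscut based at $a$. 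Splitting on $B$ rather than on $x$ makes the overlap $\Pc' \cap \Pc_{\ge a}$ invisible, so no separate bookkeeping is required.

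Your concern about rank discrepancies is also unfounded: for every $x \in \Pc'$ one has $\rk_{\Pc'}(x) = \rk_\Pc(x)$. Indeed, $\Pc'_{\le x}$ is the sublattice of the geometric lattice $\Pc_{\le x}$ generated by $A_x \setminus \{a\}$, and since $x \in \Pc'$ forces $x = \bigvee_{\Pc_{\le x}}(A_x \setminus \{a\})$, the element $x$ remains the top of this sublattice with unchanged rank. So the ``delicate bookkeeping step'' you anticipate does not actually arise.
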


  \begin{proof}
  Since $\Pc_{\le x}$ is a geometric lattice  for every $x \in \Pc$, by Lemma \ref{lem:crosscut} we have
\begin{align*}
\chi_{\Pc}(t) &=  \sum_{x \in \Pc} 
 \sum_{\substack{B \subseteq A_x \\  x\,=\, \max(\Pc(B))}} (-1)^{|B|}   t^{r-\rk(x)} \\
 & =  \sum_{x \in \Pc} 
 \sum_{\substack{ a \notin B \subseteq A_x \\  x\,=\, \max(\Pc(B))}} (-1)^{|B|}   t^{r-\rk(x)} 
 +
  \sum_{x \in \Pc} 
 \sum_{\substack{a \in B \subseteq A_x \\  x\,=\, \max(\Pc(B))}} (-1)^{|B|}  t^{r-\rk(x)} \\
 & =  \sum_{x \in \Pc'} 
 \sum_{\substack{ B \subseteq A_x \\  x\,=\, \max(\Pc(B))}} (-1)^{|B|}  t^{\rk(\Pc')+\epsilon(a)-\rk(x)} 
-
  \sum_{x \in \Pc_{\ge a}} 
 \sum_{B \in S(a,x)} (-1)^{|B \setminus A_a|} t^{r-\rk(x)} \\
  & =  t^{\epsilon(a)}\cdot\chi_{\Pc'}(t)
-
  \sum_{x \in \Pc''} \mu(a,x) t^{\rk''(\Pc'')-\rk''(x)} \\
    & =  t^{\epsilon(a)}\cdot\chi_{\Pc'}(t) - \chi_{\Pc''}(t). \qedhere
\end{align*}
\end{proof}

Now we introduce the protagonists of the paper.

\begin{definition} 
\label{def:IP}
The class $\mathbf{IP}$ of \emph{inductive posets} is the smallest class of locally geometric posets which satisfies
\begin{enumerate}[(1)]
\item  $\{\hat0\}\in\mathbf{IP}$,
\item $\Pc \in \mathbf{IP}$ if there exists an atom $a \in A$ such that $\Pc'' \in \mathbf{IP}$, $\Pc' \in \mathbf{IP}$, and $\chi_{\Pc''}(t)$ divides $\chi_{\Pc'}(t)$.
\end{enumerate}
\end{definition}

\begin{definition} 
\label{def:DP}
The class $\mathbf{DP}$ of \emph{divisional posets} is the smallest class of locally geometric posets which satisfies
\begin{enumerate}[(1)]
\item $\{\hat0\}\in\mathbf{DP}$,
\item $\Pc \in \mathbf{DP}$ if there exists an atom $a \in A$ such that $\Pc'' \in \mathbf{DP}$ and $\chi_{\Pc''}(t)$ divides $\chi_{\Pc}(t)$.
\end{enumerate}
\end{definition}

Here are the first two important properties of the inductive and divisional posets.

\begin{proposition}
\label{prop:IP<DP}
If $\Pc \in \mathbf{IP}$ then $\Pc \in \mathbf{DP}$.
\end{proposition}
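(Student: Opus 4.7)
The plan is to argue by induction on $|\Pc|$, or equivalently by induction on the recursive construction of $\mathbf{IP}$ given in Definition \ref{def:IP}. The base case is immediate since $\{\hat 0\}$ belongs to $\mathbf{DP}$ by clause (1) of Definition \ref{def:DP}.

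For the inductive step, suppose $\Pc \in \mathbf{IP}$ is obtained via clause (2), so there is an atom $a \in A$ such that $\Pc', \Pc'' \in \mathbf{IP}$ and $\chi_{\Pc''}(t) \mid \chi_{\Pc'}(t)$. Since $\Pc''=\Pc_{\ge a}$ has strictly smaller size than $\Pc$, the induction hypothesis applies and gives $\Pc'' \in \mathbf{DP}$. The key computation is then to exhibit the same atom $a$ as a witness for $\Pc \in \mathbf{DP}$. By the deletion-restriction recurrence of Theorem \ref{thm:LGP-DC},
\[
\chi_{\Pc}(t) = t^{\epsilon(a)}\chi_{\Pc'}(t) - \chi_{\Pc''}(t).
\]
The divisibility hypothesis $\chi_{\Pc''}(t) \mid \chi_{\Pc'}(t)$ upgrades at once to $\chi_{\Pc''}(t) \mid t^{\epsilon(a)}\chi_{\Pc'}(t)$, and trivially $\chi_{\Pc''}(t) \mid \chi_{\Pc''}(t)$; hence $\chi_{\Pc''}(t) \mid \chi_{\Pc}(t)$. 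Combined with $\Pc'' \in \mathbf{DP}$, this verifies clause (2) of Definition \ref{def:DP} for $\Pc$ with the atom $a$, giving $\Pc \in \mathbf{DP}$.

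There is essentially no obstacle here: the statement is a direct formal consequence of Theorem \ref{thm:LGP-DC} and the fact that the only difference between the defining recursions is whether one additionally requires $\Pc'\in\mathbf{IP}$ and replaces $\chi_{\Pc}$ by $\chi_{\Pc'}$ in the divisibility condition. The only point to be slightly careful about is that the recursion witness $a$ for $\Pc \in \mathbf{IP}$ can be reused verbatim as the witness for $\Pc \in \mathbf{DP}$, so no new choice of atom is needed.
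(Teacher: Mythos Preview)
Your proof is correct and follows essentially the same approach as the paper's: both use the deletion--restriction formula of Theorem \ref{thm:LGP-DC} to upgrade the divisibility $\chi_{\Pc''}(t)\mid\chi_{\Pc'}(t)$ to $\chi_{\Pc''}(t)\mid\chi_{\Pc}(t)$, then reuse the same atom $a$ as witness for $\Pc\in\mathbf{DP}$. The only cosmetic difference is that the paper inducts on $\rk(\Pc)$ rather than on $|\Pc|$ (and adds a parenthetical remark that $t\nmid\chi_{\Pc''}(t)$ via Lemma \ref{lem:SAIS}, which is not actually needed for the divisibility step as you present it).
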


  \begin{proof}
  We argue by induction on  $r=\rk(\Pc) \ge 0$. 
The assertion clearly holds true when $r= 0$.
Suppose $r > 0$. 
Since $\Pc \in \mathbf{IP}$, there exists an atom $a \in A$ such that $\Pc'' \in \mathbf{IP}$ and $\chi_{\Pc''}(t)$ divides $\chi_{\Pc'}(t)$. 
By the induction hypothesis, $\Pc'' \in \mathbf{DP}$. 
Furthermore, by Theorem \ref{thm:LGP-DC}, $\chi_{\Pc''}(t)$ divides $\chi_{\Pc}(t)$. 
(Note that $t \nmid  \chi_{\Pc''}(t)$ by Lemma \ref{lem:SAIS}.)
Thus $\Pc \in \mathbf{DP}$ as desired.
\end{proof}

\begin{proposition}
\label{prop:IP-DP-preserved}
Let $\Pc, \Qc$ be two isomorphic locally geometric posets. 
Then $\Pc \in \mathbf{IP}$ (resp., $\Pc \in \mathbf{DP}$) if and only if  $\Qc \in \mathbf{IP}$ (resp., $\Qc \in \mathbf{DP}$).
\end{proposition}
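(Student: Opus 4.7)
The plan is to argue by induction on the common rank $r=\rk(\Pc)=\rk(\Qc)$, using the fact that a poset isomorphism preserves all the combinatorial data appearing in the defining recursions of $\mathbf{IP}$ and $\mathbf{DP}$. Since the relation ``is isomorphic to'' is symmetric, it suffices to establish, for both $\mathbf{IP}$ and $\mathbf{DP}$, the implication $\Pc \in \mathbf{IP} \Rightarrow \Qc \in \mathbf{IP}$ (and likewise for $\mathbf{DP}$). Fix a poset isomorphism $\sigma : \Pc \to \Qc$.

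First I would record the following general preservation facts, all of which follow directly from $\sigma$ and $\sigma^{-1}$ being order-preserving bijections:
\begin{enumerate}[(i)]
\item $\sigma$ preserves the rank function, hence it restricts to a bijection $A(\Pc) \to A(\Qc)$ of atom sets and preserves covers;
\item $\sigma$ preserves arbitrary joins, so for any $B \subseteq A(\Pc)$ the restriction of $\sigma$ yields a poset isomorphism $\Pc(B) \xrightarrow{\sim} \Qc(\sigma(B))$ of subposets generated by atoms (Definition \ref{def:gen-subposet});
\item for any $x \in \Pc$, $\sigma$ restricts to isomorphisms $\Pc_{\le x} \xrightarrow{\sim} \Qc_{\le \sigma(x)}$ and $\Pc_{\ge x} \xrightarrow{\sim} \Qc_{\ge \sigma(x)}$;
\item consequently $\sigma$ preserves the M\"obius function and hence the characteristic polynomial, i.e. $\chi_{\Pc}(t) = \chi_{\Qc}(t)$, and similarly for any subposet of the form $\Pc_{\ge x}$ or $\Pc(B)$.
\end{enumerate}

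Now the induction. The base $r=0$ is trivial since then $\Pc=\Qc=\{\hat 0\}$. For the inductive step, suppose $\Pc \in \mathbf{IP}$ and let $a \in A(\Pc)$ be an atom witnessing Definition \ref{def:IP}(2), so that $\Pc',\Pc'' \in \mathbf{IP}$ and $\chi_{\Pc''}(t) \mid \chi_{\Pc'}(t)$. Set $b := \sigma(a) \in A(\Qc)$ and form the triple $(\Qc, \Qc', \Qc'')$ with distinguished atom $b$. By (ii) applied to $B = A(\Pc)\setminus\{a\}$, we obtain an isomorphism $\Pc' \simeq \Qc'$; by (iii) we obtain $\Pc'' = \Pc_{\ge a} \simeq \Qc_{\ge b} = \Qc''$. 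Both $\Pc'$ and $\Pc''$ have rank strictly less than $r$ (for $\Pc''$ this is immediate; for $\Pc'$ it is at most $r$ and the induction hypothesis will still apply since we induct on $\rk(\Pc')\le r$, with equality ruled out by a secondary induction or handled by noting the chain is finite). Applying the induction hypothesis gives $\Qc',\Qc'' \in \mathbf{IP}$, and (iv) yields $\chi_{\Qc''}(t) = \chi_{\Pc''}(t) \mid \chi_{\Pc'}(t) = \chi_{\Qc'}(t)$. Hence $b$ witnesses $\Qc \in \mathbf{IP}$ via Definition \ref{def:IP}(2). The argument for $\mathbf{DP}$ is identical, replacing $\chi_{\Pc'}$ by $\chi_{\Pc}$ in the divisibility condition and invoking (iv) for $\Pc \simeq \Qc$ itself.

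The only point that requires genuine care is (ii), i.e. that the construction $B \mapsto \Pc(B)$ commutes with isomorphisms; this hinges on the fact that a poset isomorphism carries minimal upper bounds to minimal upper bounds, so $\sigma\bigl(\bigvee T\bigr) = \bigvee \sigma(T)$ for every $T \subseteq \Pc$. Once this is in hand, the proof is a direct induction, and I do not anticipate any other difficulty. A mild bookkeeping subtlety is to run the induction on $r=\rk(\Pc)$ while simultaneously needing the statement for $\Pc'$, whose rank is either $r$ or $r-1$ (Remark \ref{rem:separator}); this is easily addressed by inducting instead on $|\Pc|$, or equivalently by observing that the recursive definitions of $\mathbf{IP}$ and $\mathbf{DP}$ are well-founded, so the isomorphism-invariance statement can be verified directly by structural induction on the derivation showing $\Pc \in \mathbf{IP}$ (resp.\ $\mathbf{DP}$).
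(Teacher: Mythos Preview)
Your proposal is correct and follows essentially the same approach as the paper: fix an isomorphism, transport the distinguished atom, observe that deletion and restriction are preserved, and conclude by induction. The paper handles your ``mild bookkeeping subtlety'' by running a double induction on the rank $r$ and the number $|A|$ of atoms from the outset (so that $|A(\Pc')|<|A(\Pc)|$ suffices for $\Pc'$ and $\rk(\Pc'')<\rk(\Pc)$ suffices for $\Pc''$), which is exactly one of the fixes you identify at the end.
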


  \begin{proof}
We show the assertion for  $\mathbf{IP}$ by double induction on the rank $r$ and number $|A|$ of atoms. 
The assertion for $\mathbf{DP}$ can be proved by induction on the rank $r$ by a similar (and easier) argument. 

The assertion is clearly true when $r=0$ or $|A|=0$. 
Suppose $r \ge 1$ and $|A|\ge 1$.
Let $f: \Pc \to \Qc$ be a poset isomorphism. 
Suppose $\Pc \in \mathbf{IP}$. 
Then there exists an atom $a \in A$ such that $\Pc'' \in \mathbf{IP}$, $\Pc' \in \mathbf{IP}$, and $\chi_{\Pc''}(t)$ divides $\chi_{\Pc'}(t)$. 
Define $\Qc' :=\Qc(A(\Qc)\setminus\{f(a)\})$ and $\Qc'':=\Qc_{\ge f(a)}$. 
Hence $\Pc' \simeq \Qc'$ and $\Pc'' \simeq \Qc''$.
Note that $|A(\Pc')| < |A(\Pc)|$ and $\rk''(\Pc'') < \rk(\Pc)$. 
By the induction hypothesis, $\Qc'' \in \mathbf{IP}$ and $\Qc' \in \mathbf{IP}$. 
It is also clear that $\chi_{\Qc''}(t)$ divides $\chi_{\Qc'}(t)$ since the characteristic polynomial is preserved under isomorphism. 

\end{proof}

\begin{remark} 
\label{rem:IL}
We address here some remarks about the relation of our inductive and divisional posets with some known concepts in literature. 
\begin{enumerate}[(1)]
	\item Brandt \cite[Definition 1.2.21]{Bra92} defined the class $\mathbf{IL}$ of \emph{inductive lattices} to be the smallest class of geometric lattices which satisfies: (1) $\{\hat0\} \in \mathbf{IL}$ and (2) $\Pc \in \mathbf{IL}$ if there exists an atom $a \in A$ such that $\Pc'' \in \mathbf{IL}$, $\Pc' \in \mathbf{IL}$, and $\chi_{\Pc''}(t)$ divides $\chi_{\Pc'}(t)$. 
Thus for a geometric lattice $\Pc$, we have that $\Pc \in \mathbf{IL}$ if and only if $\Pc \in \mathbf{IP}$.
	\item  A central hyperplane arrangement $\Hc$ in $V=\mathbb{K}^\ell$ is inductively free (resp., divisionally free) in Definition \ref{def:IF-arr} (resp., \ref{def:DF-arr})  if and only if the (geometric) intersection  lattice  $L(\Hc)$  of $\Hc$ is  inductive (resp., divisional). 
 In particular, $\tbf{IP} \subsetneq \tbf{DP} $ which follows from Remark \ref{rem:all-property}.
\end{enumerate}
\end{remark}

Now we give a proof of the first main result of the paper.

\begin{proof}[\tbf{Proof of  Theorem \ref{thm:DP-factor-intro}}]
We need to show that if $\Pc \in \mathbf{DP}$ with $r=\rk(\Pc)\ge 1$, then there are positive integers $d_1,\ldots,d_r\in \Z_{>0}$ such that 
$$\chi_{\Pc}(t) = \prod_{i=1}^{r}(t-d_i).$$

We argue by induction on  $r$. 
If $r= 1$ then $\chi_{\Pc}(t) =t- |A|$. 
The assertion clearly holds.
Suppose $r > 1$. 
Since $\Pc \in \mathbf{DP}$, there exists an atom $a \in A$ such that $\Pc'' \in \mathbf{DP}$ and $\chi_{\Pc''}(t)$ divides $\chi_{\Pc}(t)$.
By the induction hypothesis, there exist positive integers $d_1,\ldots,d_{r-1} \in \Z_{>0}$ and an integer $d_r \in \Z$ such that 
\begin{align*}
\chi_{\Pc''}(t)
& =\prod_{i=1}^{r-1}(t-d_i),  \\
\chi_{\Pc}(t) 
& = (t-d_r)\chi_{\Pc''}(t).
\end{align*}
Moreover, $d_1d_2\cdots d_r>0$ by Lemma \ref{lem:SAIS}. 
Thus $d_r>0$.
\end{proof}

Thus the divisionality of a poset is a sufficient condition for its factorability.
The following necessary and sufficient condition for a poset to be divisional is immediate from Definition \ref{def:DP}. 
Note that the sum of all exponents of a divisional poset equals the number of atoms. 

   \begin{theorem}
\label{thm:div-chain}
A locally geometric poset  $\Pc$ of rank $r$ is divisional if and only if there exists a chain, called a \emph{divisional chain}
$$\hat 0 = x_0 < x_1 < \cdots <  x_r,$$
such that $\rk(x_i) = i$ and $\chi_{\Qc_i}(t)$ divides $\chi_{\Qc_{i-1}}(t)$ where $\Qc_i := \Pc_{\ge x_{i}}$ for each $1 \le i \le r$. 
In this case, 
 $\exp(\Pc) = \{d_1,\ldots,d_r\}$
 where $d_i : = |A(\Qc_{i-1})| -  |A(\Qc_i)| $.
\end{theorem}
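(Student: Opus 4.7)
The plan is to induct on $r = \rk(\Pc)$, directly unwrapping Definition~\ref{def:DP}; the base case $r = 0$ is trivial in both directions. The \emph{only if} direction extracts a chain by iterating the divisionality recursion, while the \emph{if} direction re-assembles divisionality from the given chain.

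For the \emph{only if} direction, suppose $\Pc \in \mathbf{DP}$ with $r \ge 1$. Definition~\ref{def:DP} furnishes an atom $a \in A(\Pc)$ with $\Pc'' = \Pc_{\ge a} \in \mathbf{DP}$ and $\chi_{\Pc''} \mid \chi_\Pc$. Set $x_1 := a$ and $\Qc_1 := \Pc''$, a divisional locally geometric poset of rank $r-1$; the inductive hypothesis applied to $\Qc_1$ yields a divisional chain $a = y_0 < y_1 < \cdots < y_{r-1}$ in $\Qc_1$. Setting $x_i := y_{i-1}$ for $i \ge 1$ and prepending $\hat 0$ produces the desired chain in $\Pc$: the rank condition $\rk(x_i) = i$ follows from the fact that the rank in $\Pc$ of any $y \in \Qc_1$ is one more than its rank in $\Qc_1$, while the identification $\Qc_i = \Pc_{\ge x_i} = (\Qc_1)_{\ge y_{i-1}}$ for $i \ge 2$ transports the divisibility from the chain in $\Qc_1$ into $\Pc$, and the case $i=1$ is given. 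Conversely, given such a chain in $\Pc$, take $a := x_1$ so $\Pc'' = \Qc_1$; the truncation $x_1 < x_2 < \cdots < x_r$ serves as a divisional chain of length $r-1$ in $\Qc_1$, whence by induction $\Qc_1 \in \mathbf{DP}$. Combined with the given $\chi_{\Qc_1} \mid \chi_\Pc$, Definition~\ref{def:DP} delivers $\Pc \in \mathbf{DP}$.

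For the exponent formula, Theorem~\ref{thm:DP-factor-intro} applied to each $\Qc_i \in \mathbf{DP}$ (which is divisional via the tail chain $x_i < \cdots < x_r$) guarantees that $\chi_{\Qc_i}$ is monic of degree $r-i$ and factors completely into monic linear factors with positive integer roots. Hence the successive quotient $\chi_{\Qc_{i-1}}(t)/\chi_{\Qc_i}(t)$ is a monic linear polynomial, say $t - d_i$ with $d_i \in \Z_{>0}$, so telescoping along the chain yields $\chi_\Pc = \prod_{i=1}^r (t - d_i)$ and $\exp(\Pc) = \{d_1, \ldots, d_r\}$. To identify the $d_i$, compare the coefficient of $t^{r-i}$ on both sides of $\chi_{\Qc_{i-1}}(t) = (t - d_i)\chi_{\Qc_i}(t)$: for any locally geometric poset $\scC$ of rank $s$, the expansion $\chi_\scC(t) = t^s - |A(\scC)|\, t^{s-1} + \cdots$ holds since each atom contributes $\mu(\hat 0, \cdot) = -1$, giving $-|A(\Qc_{i-1})| = -|A(\Qc_i)| - d_i$, i.e., $d_i = |A(\Qc_{i-1})| - |A(\Qc_i)|$.

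The main subtlety in the plan is the rank drop $\rk(\Qc_1) = r-1$ at each inductive step of the \emph{only if} direction; this rank-matching is implicit in the framework of Theorem~\ref{thm:LGP-DC} and the proof of Theorem~\ref{thm:DP-factor-intro}, and once granted, the rest of the argument is a transparent unfolding of the recursive definition.
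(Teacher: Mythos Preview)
Your proof is correct and follows the same approach as the paper, which simply declares the result ``immediate from Definition~\ref{def:DP}'' without further argument; you have spelled out the induction that the paper leaves implicit, including the exponent computation via the subleading coefficient. The rank-drop subtlety you flag is handled exactly as the paper handles it in the proof of Theorem~\ref{thm:DP-factor-intro}, so your treatment is consistent with the surrounding text.
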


\begin{remark} 
\label{rem:converse}
The converse of Theorem \ref{thm:DP-factor-intro} is not true in general. 
Namely, there exists a factorable poset that is not divisional. 
An example from hyperplane arrangements is already mentioned in Remark \ref{rem:all-property}. 
We give here an example of a poset that is not a lattice. 
In \cite[Example 4.6]{Hal17}, the \emph{weighted partition poset} $\Pc : =\Pi_3^w$ of rank $3$ is given with the characteristic polynomial 
$\chi_{\Pc}(t)  = (t-3)^2$ (see Figure \ref{fig:pi3w}). 
However, $\Pc$ is not divisional because $\chi_{ \Pc_{\ge x}}(t)=t-2$ does not divide   $\chi_{\Pc}(t)$ for any atom $x$.
\end{remark}

	 	\begin{figure}[htbp]
\centering
\begin{tikzpicture}[scale=.7]
\draw (0,0) node[v](0){};
\draw (1,2) node[v](4){};
\draw (3,2) node[v](5){};
\draw (5,2) node[v](6){};
\draw (-1,2) node[v](3){};
\draw (-3,2) node[v](2){};
\draw (-5,2) node[v](1){};
\draw (-4,4) node[v](7){};
\draw (0,4) node[v](8){};
\draw (4,4) node[v](9){};

\draw (0)--(1)--(7)--(2)--(0)--(3)--(7);
\draw (0)--(4)--(9)--(5)--(0)--(6)--(9);
\draw (1)--(8)--(2);
\draw (3)--(8)--(4);
\draw (5)--(8)--(6);
\end{tikzpicture}
\caption{The weighted partition poset $\Pi_3^w$.}
\label{fig:pi3w}
\end{figure}
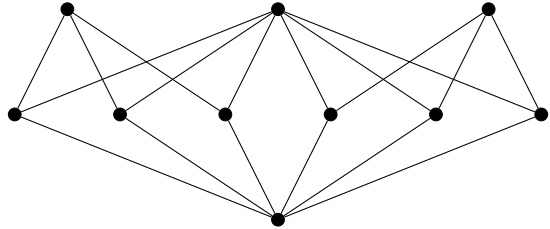

By Proposition \ref{prop:IP<DP}, the exponents of an inductive poset are defined naturally. 
The following ``addition" theorem for inductive posets follows readily from Definition \ref{def:IP} and Theorem \ref{thm:LGP-DC}.

   \begin{theorem}
\label{thm:add-IP}
Let $\Pc$ be a locally geometric poset with $A \ne \emptyset$ and let $a \in A$. 
\begin{enumerate}[(a)]
\item Suppose that $a$ is not a separator of $\Pc$. 
If $\Pc'' \in \mathbf{IP}$ with $\exp(\Pc'') = \{d_1,\ldots,d_{\ell-1}\}$ and $\Pc' \in  \mathbf{IP}$ with $\exp(\Pc') = \{d_1,\ldots,d_{\ell-1},d_\ell\}$, then $\Pc \in \mathbf{IP}$ with $\exp(\Pc) = \{d_1,\ldots,d_{\ell-1},d_\ell+1\}$.
\item Suppose that $a$ is a separator of $\Pc$. 
If $\Pc'' \in \mathbf{IP}$, $\Pc' \in  \mathbf{IP}$ with $\exp(\Pc'') =\exp(\Pc') = \{d_1,\ldots,d_{\ell-1}\}$, then $\Pc \in \mathbf{IP}$ with $\exp(\Pc) = \{1,d_1,\ldots,d_{\ell-1}\}$.
\end{enumerate}
\end{theorem}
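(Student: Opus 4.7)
The plan is to derive Theorem \ref{thm:add-IP} directly from the deletion-restriction recurrence of Theorem \ref{thm:LGP-DC} together with the defining recursion of $\mathbf{IP}$ in Definition \ref{def:IP}. In both parts (a) and (b) the hypotheses already place $\Pc''$ and $\Pc'$ in $\mathbf{IP}$, so the only new verifications are the divisibility condition $\chi_{\Pc''}(t)\mid\chi_{\Pc'}(t)$ demanded by Definition \ref{def:IP}(2), and the reading off of $\exp(\Pc)$ from the resulting factorization of $\chi_{\Pc}(t)$. Before starting I would fix the rank bookkeeping: since $a$ is an atom, $\rk''(\Pc_{\ge a})=\rk(\Pc)-1$, so the hypothesis $|\exp(\Pc'')|=\ell-1$ forces $\rk(\Pc)=\ell$; the two parts then correspond precisely to whether $\rk(\Pc')=\ell$ (i.e.\ $\epsilon(a)=0$, part (a)) or $\rk(\Pc')=\ell-1$ (i.e.\ $\epsilon(a)=1$, part (b)).

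For part (a), I would write $\chi_{\Pc'}(t)=\prod_{i=1}^{\ell}(t-d_i)$ and $\chi_{\Pc''}(t)=\prod_{i=1}^{\ell-1}(t-d_i)$. The quotient is the single linear factor $t-d_\ell$, so the divisibility condition holds and Definition \ref{def:IP}(2) gives $\Pc\in\mathbf{IP}$. Applying Theorem \ref{thm:LGP-DC} with $\epsilon(a)=0$ then yields
\[\chi_{\Pc}(t)=\chi_{\Pc'}(t)-\chi_{\Pc''}(t)=\prod_{i=1}^{\ell-1}(t-d_i)\bigl[(t-d_\ell)-1\bigr]=\prod_{i=1}^{\ell-1}(t-d_i)\cdot\bigl(t-(d_\ell+1)\bigr),\]
from which $\exp(\Pc)=\{d_1,\dots,d_{\ell-1},d_\ell+1\}$ is read off.

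For part (b), the hypothesis directly gives $\chi_{\Pc'}(t)=\chi_{\Pc''}(t)=\prod_{i=1}^{\ell-1}(t-d_i)$, so divisibility is trivial and Definition \ref{def:IP}(2) again yields $\Pc\in\mathbf{IP}$. Theorem \ref{thm:LGP-DC} with $\epsilon(a)=1$ now produces
\[\chi_{\Pc}(t)=t\,\chi_{\Pc'}(t)-\chi_{\Pc''}(t)=(t-1)\prod_{i=1}^{\ell-1}(t-d_i),\]
so $\exp(\Pc)=\{1,d_1,\dots,d_{\ell-1}\}$.

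I do not anticipate a genuine obstacle: once $\rk''(\Pc'')$ is identified with $\rk(\Pc)-1$ and the two parts are separated by the value of $\epsilon(a)$, each case reduces to a one-line polynomial identity. The only thing that needs care is matching the indexing of exponents in the hypothesis with the degree of each characteristic polynomial; in particular, in part (a) the assumption that the first $\ell-1$ exponents of $\Pc'$ and $\Pc''$ \emph{coincide} is exactly what ensures the quotient $\chi_{\Pc'}/\chi_{\Pc''}$ is a single linear factor, and this is the precise content of writing $\exp(\Pc')=\{d_1,\dots,d_{\ell-1},d_\ell\}$ in the hypothesis.
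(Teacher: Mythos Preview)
Your proof is correct and follows exactly the route the paper intends: the paper states only that the result ``follows readily from Definition \ref{def:IP} and Theorem \ref{thm:LGP-DC}'', and you have simply spelled out the one-line polynomial identities that make this precise in each of the two cases $\epsilon(a)=0$ and $\epsilon(a)=1$. There is nothing to add.
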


The process of constructing an inductive poset $\Pc$ from the trivial lattice (or more generally, from an inductive subposet generated by some atoms) by adding an atom one at a time with the aid of Theorem  \ref{thm:add-IP} is called an \emph{induction table}. 
Each row of the table records the exponents of $\Pc'$ and $\Pc''$ and the atom $a$ added at each step.
The last row displays the exponents of $\Pc$. 

We will see in Section \ref{sec:rootsystem} many examples of posets which are both inductive and geometric arising from abelian arrangements.
Figure \ref{fig:ind-not-geo} below depicts an inductive poset that is not geometric. 
(In particular, it is not the poset of layers of an abelian arrangement by Theorem \ref{thm:GP}.)

\begin{figure}[htbp]
\centering
\begin{subfigure}{.35\textwidth}
  \centering
\begin{tikzpicture}[scale=.7]
  \node (0) at (0,0) {$\hat 0$};
  \node (2) at (-3,2) {$x$};
    \node (3) at (-1,2) {$a_2$};
      \node (4) at (1,2) {$a_3$};
        \node (5) at  (3,2)  {$a_4$};
                \node (9) at  (1,4)  {$y$};
\draw (-2,4) node[v](8){};
\draw (3,4) node[v](10){};
\draw (0)--(2)--(8)--(3)--(0)--(4)--(9)--(5)--(0);
\draw (5)--(10)--(4);
\end{tikzpicture}
\label{fig:ING-poset}
\end{subfigure}%
\qquad
\begin{subfigure}{.45\textwidth}
  \centering
    {\footnotesize\renewcommand\arraystretch{1.5} 
\begin{tabular}{ccc}
\hline
$\exp(\Pc')$ &  $a$ &   $\exp(\Pc'')$  \\
\hline
$\emptyset$ &  $x$ & $\emptyset$ \\
$1$ &  $a_3$ & $\emptyset$ \\
$2$ &  $a_4$ & $2$ \\
$1,2$ &  $a_2$ & $1$ \\
$1,3$ &    &   \\
\hline
\end{tabular}
}
  \label{fig:ING-table}
\end{subfigure}
\caption{An inductive poset that is not geometric (left) and an induction table for its inductiveness (right). The elements labelled by $x$ and $y$ do not satisfy the requirement of Definition \ref{def:GP}.
}
\label{fig:ind-not-geo}
\end{figure}

%********************************************************************************************************
\section{Strictly supersolvable implies inductive}
\label{sec:sss-ind} 

In this section we prove the second main result of the paper (Theorem \ref{thm:SSS<IP-intro}). 
First we need some basic facts of M-ideals. 
All posets in this section are locally geometric.

 \begin{lemma} 
\label{lem:pure}
If a poset $\Pc$ has an M-ideal $\Qc$ with $\rk(\Qc) = \rk(\Pc) - 1$, then $\Pc$ is necessarily pure. 
 \end{lemma}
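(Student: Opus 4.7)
The plan is to argue by contradiction: suppose $\Pc$ has a maximal element $x\in\max(\Pc)$ with $\rk(x)<r$, where $r:=\rk(\Pc)$, and derive a contradiction from the two defining conditions of the M-ideal $\Qc$.

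First, I would apply condition (2) of Definition \ref{def:M-TM-ideal} to $x$: there exists $y\in\max(\Qc)$ with $y\le x$ that is modular in $\Pc_{\le x}$. Since $\Qc$ is pure with $\rk(\Qc)=r-1$, we have $\rk(y)=r-1$, so $\rk(x)\ge r-1$; combined with the assumption $\rk(x)<r$, this forces $\rk(x)=r-1$. Because $\Pc$ is ranked, any comparable pair of elements of equal rank must coincide, so $y=x$. In particular $x\in\Qc$.

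Next, to invoke condition (1), I need an atom $a\in A(\Pc)\setminus A(\Qc)$, and I would establish the auxiliary fact $A(\Qc)\subsetneq A(\Pc)$ by contradiction: if $A(\Qc)=A(\Pc)$, then for any $z\in\Pc$, local geometricity shows that $z$ is the join of $A_z$ in the geometric lattice $\Pc_{\le z}$, and a short check (any $z'\le z$ that is an upper bound of $A_z$ must equal $z$) shows that $z$ is a minimal upper bound of $A_z$ in $\Pc$ as well; join-closedness of $\Qc$ then forces $z\in\Qc$, whence $\Pc=\Qc$, contradicting $\rk(\Qc)=r-1$.

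Finally, applying condition (1) to this $a$ and to $x\in\Qc$ produces some $z\in a\vee x$, i.e., a common upper bound of $a$ and $x$ in $\Pc$. Since $\Qc$ is an order ideal containing $x$, we have $A_x\subseteq A(\Qc)$, so $a\not\le x$, and therefore $z>x$, contradicting the maximality of $x$. I expect the main obstacle to be the verification that $A(\Qc)\subsetneq A(\Pc)$, since it requires a slightly subtle interplay between local geometricity and the poset-theoretic notion of join-closedness (the joins here are sets of minimal upper bounds rather than single elements); the remaining steps are direct unwindings of the M-ideal axioms.
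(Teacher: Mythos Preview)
Your proof is correct and follows essentially the same route as the paper's: both use condition~(2) to force any maximal element of rank less than $r$ into $\Qc$, then condition~(1) together with $A(\Pc)\setminus A(\Qc)\ne\emptyset$ to contradict maximality. The paper phrases this as a case split on whether $x\in\Qc$ rather than a direct contradiction, and dispatches the nonemptiness of $A(\Pc)\setminus A(\Qc)$ in one line (``since $\Qc$ is join-closed''), whereas you spell out the local-geometricity argument more carefully; the content is the same.
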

 
   \begin{proof}
 First note that $ A(\Pc)  \setminus A (\Qc) \ne \emptyset$ since $\Qc$ is join-closed. 
Fix an arbitrary $x\in\max(\Pc)$. 
 If $x \in \Qc$, then by Condition \ref{def:M-TM-ideal}(1)  for any $a \in A(\Pc) \setminus A(\Qc)$ there exists $b \in a \vee x$ such that $x <b$, a contradiction. 
 We may assume $x \in \Pc  \setminus \Qc$. 
 Then by Condition \ref{def:M-TM-ideal}(2),  there exists $y\in\max(\Qc)$ such that $y <x$. 
 Thus $\rk(x) > \rk(\Qc)$ and hence $\rk(x)  = \rk(\Pc)$.
\end{proof}
 
 \begin{lemma}[{\cite[Lemma 2.4.6]{BD22}}]
\label{lem:hat0}
Let $\Qc $ be an M-ideal of a poset $\Pc$ with $\rk(\Qc) = \rk(\Pc) - 1$ and let $a\in \Pc$.  
Then $a \in A(\Pc)  \setminus A (\Qc)$ if and only if $y \wedge  a = \hat 0$ for all $y\in\max(\Qc)$.
 \end{lemma}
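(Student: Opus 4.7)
The plan is to prove the two implications separately. The forward direction and the first part of the reverse direction will be formal consequences of $\Qc$ being an order ideal; the hard part is to show, in the reverse direction, that $a$ is forced to be an atom rather than a higher-rank element. That last step is where I expect to genuinely need the modular-element condition built into the M-ideal definition.

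First I would handle the forward direction. Suppose $a \in A(\Pc)\setminus A(\Qc)$. Since $\Qc$ is an order ideal of $\Pc$ and ranks in $\Qc$ are inherited from $\Pc$, any atom of $\Pc$ lying in $\Qc$ is automatically an atom of $\Qc$; hence $a\notin\Qc$, and therefore $a\not\le y$ for every $y\in\Qc$. Because $a$ is an atom, its only lower bounds are $\hat 0$ and $a$ itself, so for any $y\in\max(\Qc)$ the common lower bounds of $a$ and $y$ reduce to $\{\hat 0\}$, yielding $y\wedge a=\hat 0$.

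For the reverse direction, assume $y\wedge a=\hat 0$ for every $y\in\max(\Qc)$ (the case $a=\hat 0$ is trivial, so I may assume $a\ne\hat 0$) and split the argument into two substeps. Step (i) is that $a\notin\Qc$: by Lemma~\ref{lem:pure} the M-ideal $\Qc$ is pure of rank $\rk(\Pc)-1$, so if $a$ belonged to $\Qc$ I could extend a chain below $a$ to a maximal chain in $\Qc$ and produce some $y\in\max(\Qc)$ with $a\le y$; this would make $a$ itself the maximum common lower bound of $a$ and $y$, contradicting $y\wedge a=\hat 0$ because $a\ne\hat 0$.

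Step (ii) — that $\rk(a)=1$ — is the crux. I would use purity of $\Pc$ (which also follows from Lemma~\ref{lem:pure}) to pick $x\in\max(\Pc)$ with $a\le x$, and then invoke condition~(2) of Definition~\ref{def:M-TM-ideal} to produce $y\in\max(\Qc)$ that is modular in the geometric lattice $\Pc_{\le x}$. Since $\rk(y)=\rk(\Qc)=\rk(x)-1$, the element $y$ is a coatom of $\Pc_{\le x}$; combined with $a\notin\Qc$ (hence $a\not\le y$), this gives $y\vee a=x$ in $\Pc_{\le x}$. The rank identity valid for a modular element in a geometric lattice,
\[
\rk(a)+\rk(y)=\rk(a\vee y)+\rk(a\wedge y),
\]
then forces $\rk(a\wedge y)=\rk(a)-1$. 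Because $y$ and $a$ both sit inside $\Pc_{\le x}$, the meet $y\wedge a$ computed there coincides with the meet in $\Pc$, so the hypothesis $y\wedge a=\hat 0$ yields $\rk(a)=1$. Together with step (i), this gives $a\in A(\Pc)\setminus A(\Qc)$. The main obstacle is precisely step (ii): nothing about the zero-meet hypothesis rules out larger-rank $a$ by purely formal means, and it is the existence of the modular coatom $y$ in each $\Pc_{\le x}$, together with the rank identity it satisfies, that converts the hypothesis $y\wedge a=\hat 0$ into the rank equality $\rk(a)=1$.
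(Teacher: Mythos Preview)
The paper does not supply its own proof of this lemma; it is quoted from \cite[Lemma 2.4.6]{BD22} without argument, so there is nothing in-paper to compare against. That said, your proof is essentially correct and follows the natural route: the forward direction is immediate from $\Qc$ being an order ideal, and for the reverse you correctly isolate the key step~(ii), where the modular coatom $y\le x$ supplied by condition~(2) of Definition~\ref{def:M-TM-ideal} lets you invoke the rank identity $\rk(a)+\rk(y)=\rk(a\vee y)+\rk(a\wedge y)$ to force $\rk(a)=1$.

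Two small comments. First, purity of $\Qc$ is part of the \emph{definition} of an M-ideal, not a consequence of Lemma~\ref{lem:pure}; that lemma gives purity of $\Pc$, which is what you actually need in step~(ii) to guarantee $\rk(x)=\rk(\Pc)$ and hence that $y$ is a coatom of $\Pc_{\le x}$. Second, the case $a=\hat 0$ is not ``trivial'' in the direction you want: one has $y\wedge\hat 0=\hat 0$ for every $y$, yet $\hat 0\notin A(\Pc)$, so the reverse implication literally fails for $a=\hat 0$. This is a harmless imprecision in the lemma as stated (the only use in the paper is the forward direction, in the proof of Lemma~\ref{lem:>=a}), but you should flag it rather than dismiss it.
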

 
\begin{proposition}[{\cite[Proposition 2.4.7]{BD22}}]
\label{prop:x'}
Let $\Qc $ be an M-ideal of a poset $\Pc$ with $\rk(\Qc) = \rk(\Pc) - 1$. 
Fix $x \in \Pc   \setminus \Qc$ and let  $y$ be an element in $\max(\Pc)$ such that $x \le y$.  
Let $y'$ be the unique element in $\max(\Qc)$ such that ($y$ covers $y'$ and) $y'$ is a modular element in the
geometric lattice $\Pc_{\le y}$ (Definition \ref{def:M-TM-ideal}). 
Then $x':=y' \wedge x$ is the unique element in $\Qc$ such that $x$ covers $x'$ and $x'$ is modular in $ \Pc_{\le x}$.
\end{proposition}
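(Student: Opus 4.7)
The approach is to work inside the ambient geometric lattice $L := \Pc_{\le y}$, where $y'$ is by hypothesis a modular coatom, and to pull the needed modular identities down to $\Pc_{\le x}$. A helpful preliminary observation is that for $u,v \in \Pc_{\le x}$, the meet $u \wedge v$ and join $u \vee v$ computed in $\Pc_{\le x}$ coincide with those computed in $L$, since the meet with $x$ of any upper bound in $L$ is again an upper bound and lies in $\Pc_{\le x}$. This will let me apply modular identities from $L$ verbatim to $\Pc_{\le x}$.

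The membership $x' \in \Qc$ is immediate, because $x' \le y' \in \max(\Qc)$ and $\Qc$ is a downward-closed order ideal. For the covering $x \cov x'$, I would first note that $x \not\le y'$: otherwise $x$ would belong to $\Qc$, contradicting $x \in \Pc \setminus \Qc$. Hence $y' \vee x > y'$ in $L$; since $y$ covers $y'$ and $y = \max L$, this forces $y' \vee x = y$. Modularity of $y'$ in $L$ then delivers the rank identity
\[
\rk(y') + \rk(x) = \rk(y' \vee x) + \rk(y' \wedge x) = \rk(y) + \rk(x'),
\]
and $\rk(y) = \rk(y')+1$ yields $\rk(x') = \rk(x)-1$, which in the ranked lattice $\Pc_{\le x}$ means $x \cov x'$.

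Modularity of $x'$ in $\Pc_{\le x}$ I would obtain by direct transfer: for $z \le x'$ and $w \le x$, the identity $y' \wedge (w \vee z) = (y' \wedge w) \vee z$ from modularity of $y'$ in $L$ can be intersected with $x$ on both sides---harmlessly, since $w \vee z$ and $w$ are already $\le x$---to yield $x' \wedge (w \vee z) = (x' \wedge w) \vee z$, using $x' = y' \wedge x$.

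The main obstacle is uniqueness, and this is where the M-ideal hypothesis on $\Qc$ is decisive. Suppose $\tilde x \in \Qc$ satisfies $x \cov \tilde x$ and is modular in $\Pc_{\le x}$. I would aim to show $\tilde x \le y'$, after which $\tilde x \le y' \wedge x = x'$ and the equal-rank comparison forces $\tilde x = x'$. To that end, pick any $u \in \tilde x \vee y'$; join-closedness of $\Qc$ gives $u \in \Qc$, while $\tilde x, y' \le y$ places $u$ inside $L$, where it equals the lattice-theoretic join of $\tilde x$ and $y'$. In particular $u \ge y'$, and as $y$ covers $y'$ in $L$, either $u = y'$ or $u = y$. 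The option $u = y$ is ruled out by $\rk(y) > \rk(\Qc)$, which forces $y \notin \Qc$. Hence $u = y'$ and $\tilde x \le y'$, completing the argument.
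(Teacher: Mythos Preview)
The paper does not supply its own proof of this proposition; it is quoted verbatim from \cite{BD22}. Your argument is essentially correct and self-contained, but the uniqueness paragraph contains one unjustified step.

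The claim ``$\tilde x, y' \le y$ places $u$ inside $L$'' does not follow: in a locally geometric poset, an arbitrary minimal upper bound $u \in \tilde x \vee y'$ need not lie below the particular common upper bound $y$. Fortunately the detour through $L$ is unnecessary. You already have $u \in \Qc$ by join-closedness and $u \ge y'$ by definition of upper bound; since $y' \in \max(\Qc)$, this forces $u = y'$ directly, and hence $\tilde x \le y'$. (Alternatively, take $u$ specifically to be the join of $\tilde x$ and $y'$ computed in the lattice $L$; this $u$ is automatically a minimal upper bound in $\Pc$, since any strictly smaller upper bound would still lie below $y$ and contradict the lattice join, so $u \in \Qc$ and your remaining steps go through verbatim.) With this fix the proof is complete; note incidentally that your uniqueness argument never invokes modularity of $\tilde x$, so you have actually shown the stronger fact that $x'$ is the unique element of $\Qc$ covered by $x$.
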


Now we prove a new property of  a TM-ideal, extending a well-known property  \cite[Lemma 1]{St71} of a modular element in a finite geometric lattice.

 \begin{lemma}
\label{lem:>=a}
 If $\Qc$ is a TM-ideal of a poset $\Pc$ with $\rk(\Qc) = \rk(\Pc) - 1$, then for any $a \in A(\Pc)  \setminus A (\Qc)$ there is a poset isomorphism $\Qc \simeq \Pc_{\ge a}$.  
 \end{lemma}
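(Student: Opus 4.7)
The plan is to construct an explicit order-preserving bijection $\phi:\Qc\to\Pc_{\ge a}$ defined by $\phi(y):=a\vee y$. The TM-ideal condition $|a\vee y|=1$ from Definition~\ref{def:M-TM-ideal}(1*) makes $\phi(y)$ a well-defined single element of $\Pc$, and it automatically lies in $\Pc_{\ge a}$; order-preservation is immediate from the universal property of the join. It then suffices to establish surjectivity, injectivity, and order-preservation of the inverse.

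For surjectivity, given $x\in\Pc_{\ge a}$, note that $x\in\Pc\setminus\Qc$ since $\Qc$ is an order ideal with $a\notin A(\Qc)$. Proposition~\ref{prop:x'} then supplies a unique $x'\in\Qc$ that is covered by $x$ and modular in the geometric lattice $\Pc_{\le x}$. Since $x'\in\Qc$ forces $a\not\le x'$, Definition~\ref{def:GL} applied to the coatom $x'$ of $\Pc_{\le x}$ yields $x'\vee a=x$ in $\Pc_{\le x}$; TM-ideal uniqueness promotes this to $\Pc$, giving $\phi(x')=x$. For injectivity, suppose $\phi(y_1)=\phi(y_2)=u$ with $y_1,y_2\in\Qc$. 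By join-closedness, $y_1\vee y_2\subseteq\Qc$, and since $u$ is an upper bound of $\{y_1,y_2\}$ we may pick $w\in y_1\vee y_2$ with $w\le u$. The case $w=u$ is ruled out because $u\in\Pc_{\ge a}$ cannot lie in the order ideal $\Qc$. Hence $w<u$; the TM-ideal gives $|a\vee w|=1$, and from $a\vee w\ge a\vee y_1=u$ together with $w\le u$ we deduce $a\vee w=u$. By the covering relation in the geometric lattice $\Pc_{\le u}$, both $w$ and $y_1$ are then coatoms, so $\rk(w)=\rk(y_1)$; combined with $w\ge y_1$ and $w\ge y_2$, this forces $y_1=w=y_2$.

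The main obstacle is order-preservation of $\phi^{-1}$. Let $x_1\le x_2$ in $\Pc_{\ge a}$ with preimages $y_1,y_2$. The surjectivity step identifies $y_2$ with the element of Proposition~\ref{prop:x'} applied to $x_2$, so $y_2$ is modular in the geometric lattice $\Pc_{\le x_2}$. Because $x_1\ge a\not\le y_2$, we have $x_1\vee y_2=x_2$ in $\Pc_{\le x_2}$, and the modular rank identity gives $\rk(x_1\wedge y_2)=\rk(x_1)-1$. Since $a\not\le x_1\wedge y_2\le x_1$, adjoining $a$ covers $x_1\wedge y_2$ inside $\Pc_{\le x_2}$, yielding $(x_1\wedge y_2)\vee a=x_1$; TM-ideal uniqueness lifts this identity to $\Pc$. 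As $x_1\wedge y_2\in\Qc$ (order ideal) and $\phi(x_1\wedge y_2)=x_1=\phi(y_1)$, injectivity forces $y_1=x_1\wedge y_2\le y_2$. The subtlety throughout is that every join or cover must be checked to agree in $\Pc$ and in the ambient $\Pc_{\le x}$; the TM-ideal hypothesis $|a\vee y|=1$ is precisely the tool that makes this transfer automatic.
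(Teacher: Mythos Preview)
Your proof is correct and uses the same bijection $\phi(y)=a\vee y$ as the paper's map $\sigma$. The organization differs: the paper explicitly names the inverse $\tau(x)=x'$ (from Proposition~\ref{prop:x'}) and verifies $\sigma\circ\tau=\tau\circ\sigma=\mathrm{id}$, obtaining $\tau\circ\sigma=\mathrm{id}$ via the modular law for $y'$ together with Lemma~\ref{lem:hat0} (namely $y'\wedge a=\hat0$), after which order-preservation of $\tau$ is immediate since $\tau(x_1)=y'\wedge x_1$ and $\tau(x_2)=y'\wedge x_2$ for a common $y'$. You instead prove injectivity by a direct rank-and-coatom argument in $\Pc_{\le u}$ and prove order-preservation of $\phi^{-1}$ by the modular rank identity in $\Pc_{\le x_2}$, never invoking Lemma~\ref{lem:hat0}. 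Both routes are short; the paper's is slightly more streamlined once $\tau$ is named, while yours is more self-contained.
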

 \begin{proof} 
Fix $a \in A(\Pc)  \setminus A (\Qc)$ and denote $\scR : = \Pc_{\ge a}$. 
 Owing to Definition \ref{def:M-TM-ideal}(1*) and Proposition \ref{prop:x'}, two poset maps $\sigma$ and $\tau$ below are well-defined:
$$
\sigma: \Qc \longrightarrow \scR  \mbox{ via } x \mapsto x \vee a,  \quad
\tau : \scR \longrightarrow \Qc  \mbox{ via } x \mapsto x'.
$$

We show that $\sigma$ is a poset isomorphism whose inverse is exactly $\tau$. 
First we show that both maps are order-preserving. 
The assertion for $\sigma$ is easy. 
To show the assertion for $\tau$ note that for $x_1 \le_\scR x_2 $, if $y\in\max(\Pc)$ and $x_2 \le_\scR  y$, then $\tau(x_1) = y' \wedge x_1$ and $\tau(x_2) = y' \wedge x_2$ where $y'$ is the unique element in $\max(\Qc)$ such that $y'$ is modular in $\Pc_{\le y}$. 
Thus  $\tau(x_1)\le_\Qc \tau(x_2)$ follows easily. 

Now we show $\sigma\circ \tau  = \tau \circ \sigma = \mathrm{id}$. 
If $x \in \scR$, then $(\sigma\circ \tau )(x)=\sigma(x') = x' \vee a =x$ where the last equality follows from Definition \ref{def:M-TM-ideal}(1*) since $x \in x' \vee a$. 

Let $x \in \Qc$, then $(\tau \circ \sigma)(x)=\tau(x \vee a) = (x \vee a)'$.  
It remains to show $(x \vee a)'=x$. 
If  $x $ and $(x \vee a)'$ are incomparable, then $x \vee a \in (x \vee a)' \vee x$ which contradicts the join-closedness of $\Qc$. 
Note that $\rk(x \vee a) > \rk(x)$ hence it cannot happen that $x > (x \vee a)'$. 
Thus we may assume $x \le (x \vee a)'$. 
Let $y\in\max(\Pc)$ so that $x \vee a \le   y$. 
Let $y'$ be the unique element in $\max(\Qc)$ such that $y'$ is modular in $\Pc_{\le y}$.
Then
$$ (x \vee a)' = y' \wedge (x \vee a) = x\vee (y' \wedge  a) = x \vee \hat 0 = x,$$
where the second equality follows from the modularity \ref{def:modular} of $y'$ in $\Pc_{\le y}$ with $x \le y'$, and the third equality follows from Lemma \ref{lem:hat0}. 
    \end{proof}

Using the lemma above, we show the following stronger version of Theorem \ref{thm:SSS<IP-intro}.

 \begin{lemma}
\label{lem:Q<P}
Let $\Qc$ be a TM-ideal of a poset $\Pc$ with $\rk(\Qc) = \rk(\Pc) - 1$. 
If $\Qc \in \mathbf{IP}$ (resp., $\Qc \in \mathbf{DP}$), then $\Pc \in \mathbf{IP}$  (resp., $\Pc \in \mathbf{DP}$) with 
$$\exp(\Pc) = \exp(\Qc) \cup \{|A(\Pc)  \setminus A (\Qc)|\}.$$
\end{lemma}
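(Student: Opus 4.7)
The plan is to argue by induction on $d := |A(\Pc) \setminus A(\Qc)| \ge 1$, handling the $\mathbf{IP}$ and $\mathbf{DP}$ assertions in parallel. For the base case $d=1$, let $a$ be the unique atom in $A(\Pc) \setminus A(\Qc)$. Join-closedness of $\Qc$, combined with the fact that every element of a locally geometric poset is a join of atoms below it, yields $\Pc' = \Pc(A(\Pc) \setminus \{a\}) = \Pc(A(\Qc)) = \Qc$, while $\Pc'' \simeq \Qc$ by Lemma \ref{lem:>=a}. All conditions of Definitions \ref{def:IP} and \ref{def:DP} become trivial, so $\Pc$ lies in the relevant class, and since $a$ is necessarily a separator (as $\rk(\Pc') = \rk(\Qc) < \rk(\Pc)$), Theorem \ref{thm:add-IP}(b) produces $\exp(\Pc) = \exp(\Qc) \cup \{1\}$.

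For the inductive step $d \ge 2$, I first observe that every $a \in A(\Pc) \setminus A(\Qc)$ is a non-separator. Combining Theorem \ref{thm:SSS-factor}, Theorem \ref{thm:LGP-DC}, and the isomorphism-invariance $\chi_{\Pc''}(t) = \chi_\Qc(t)$ coming from Lemma \ref{lem:>=a}, one obtains
\[
t^{\epsilon(a)} \chi_{\Pc'}(t) = (t - d + 1)\, \chi_\Qc(t).
\]
Since $t \nmid \chi_\Qc(t)$ by Lemma \ref{lem:SAIS}, the case $\epsilon(a) = 1$ would force $d = 1$, a contradiction. The $\mathbf{DP}$ case now follows immediately: for any $a \in A(\Pc)\setminus A(\Qc)$ we have $\Pc'' \simeq \Qc \in \mathbf{DP}$ by Proposition \ref{prop:IP-DP-preserved}, and Theorem \ref{thm:SSS-factor} gives $\chi_{\Pc''}(t) = \chi_\Qc(t) \mid (t-d)\chi_\Qc(t) = \chi_\Pc(t)$, whence $\Pc \in \mathbf{DP}$ by Definition \ref{def:DP}.

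For the $\mathbf{IP}$ case with $d \ge 2$, pick any such $a$; then $\Pc'' \in \mathbf{IP}$ (by Lemma \ref{lem:>=a} and Proposition \ref{prop:IP-DP-preserved}) and $\chi_{\Pc''}(t) \mid \chi_{\Pc'}(t) = (t-d+1)\chi_\Qc(t)$ are already in hand. To obtain $\Pc' \in \mathbf{IP}$ and close the induction, I verify that $\Qc$ is itself a TM-ideal of $\Pc'$ with $\rk(\Qc) = \rk(\Pc')-1$, then apply the induction hypothesis to the pair $(\Pc', \Qc)$ with $d' = d-1 < d$. The rank condition holds by non-separatorness, the inclusion $\Qc \subseteq \Pc'$ is clear from $A(\Qc) \subseteq A(\Pc) \setminus \{a\}$, and pureness and order-idealness are inherited. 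Join-closedness of $\Qc$ in $\Pc'$ and condition (1*) of Definition \ref{def:M-TM-ideal} carry over because any minimal upper bound in $\Pc$ of a subset of $\Pc'$ can be realized as a join of atoms in $A(\Pc) \setminus \{a\}$, so joins computed in $\Pc'$ agree with joins computed in $\Pc$. The main obstacle is condition (2): for each $x \in \max(\Pc')$ one must produce $y \in \max(\Qc)$ modular in the geometric lattice $\Pc'_{\le x}$. The plan here is first to show $\max(\Pc') \subseteq \max(\Pc)$, so the modular coatom $y_x \in \max(\Qc)$ of $\Pc_{\le x}$ furnished by the TM-ideal property on $\Pc$ is available, and then to transfer modularity from $\Pc_{\le x}$ to $\Pc'_{\le x}$. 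When $a \not\le x$ the two lattices coincide and the transfer is immediate, while the case $a \le x$ (which may occur since $x$ can belong to $\Pc'$ via an atomic decomposition avoiding $a$) requires a delicate matroid-theoretic comparison of meets in the nested geometric lattices, controlled by Lemma \ref{lem:hat0} to force $y_x \wedge a = \hat 0$. Once this sub-claim is established, induction yields $\Pc' \in \mathbf{IP}$ with $\exp(\Pc') = \exp(\Qc) \cup \{d-1\}$, and Theorem \ref{thm:add-IP}(a) (applicable since $a$ is non-separator) gives $\exp(\Pc) = \exp(\Qc) \cup \{d\}$, completing the proof.
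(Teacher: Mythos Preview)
Your proposal is correct and follows essentially the same strategy as the paper: remove the atoms of $A(\Pc)\setminus A(\Qc)$ one at a time, verify that $\Qc$ remains a TM-ideal of each intermediate poset, and invoke Theorem~\ref{thm:add-IP} at each step (the paper phrases this as an ascending chain $\Pc_1\subsetneq\cdots\subsetneq\Pc_m$, you as a downward induction on $d$, but the content is identical). The one step you leave as a ``delicate matroid-theoretic comparison''---that the modular coatom $y_x\in\max(\Qc)$ of $\Pc_{\le x}$ stays modular in $\Pc'_{\le x}$ when $a\le x$ but $a\not\le y_x$---is exactly the crux, and the paper dispatches it by citing \cite[Lemma~4.6]{JT84} rather than reproving it; your indirect characteristic-polynomial argument for non-separatorness is a pleasant alternative to the paper's direct rank computation.
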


  \begin{proof}
  First we show the assertion for divisionality. 
  Fix $a \in A(\Pc)  \setminus A (\Qc)$. 
By Lemma \ref{lem:>=a}, $\Qc \simeq \Pc'' = \Pc_{\ge a}$. 
Suppose $\Qc \in \mathbf{DP}$. 
Then $\Pc'' \in \mathbf{DP}$ by Proposition \ref{prop:IP-DP-preserved}. 
Moreover, by  Theorem \ref{thm:SSS-factor}, 
$$\chi_{\Pc}(t)  = (t-m)\chi_{\Qc}(t),$$
where $m :=|A(\Pc)  \setminus A (\Qc)|$. 
Therefore, $\chi_{\Pc''}(t)$ divides $\chi_{\Pc}(t)$. 
Hence $\Pc \in \mathbf{DP}$ with $\exp(\Pc) = \exp(\Qc) \cup \{m\}$ as desired.
  
  Now we show the assertion for inductiveness by adding the atoms from $A(\Pc)  \setminus A (\Qc)$ to $A (\Qc)$ in any order successively with the aid of Theorem \ref{thm:add-IP}. 
  Write $A(\Pc)  \setminus A (\Qc) = \{a_1,\ldots,a_m\}$. 
  Let $A_i := A (\Qc) \cup  \{a_1,\ldots,a_i\}$ and $\Pc_i := \Pc(A_i)$ for each $1 \le i \le m$. 
  
  First note that by Lemma \ref{lem:pure}, the poset $\Pc$ is pure.
  We observe that $\rk(\Pc_i)= \rk(\Pc)=r$ for every $1 \le i \le m$. It is because $|a_i \vee y|=1$ and $\rk(a_i\vee y)=r$ for any $y\in \max(\Qc) $ and $a_i \in A_{i}  \setminus A(\Qc) \subseteq A  \setminus A(\Qc)$.
  
  We claim that $\Qc$ is a TM-ideal of rank $r-1$ of $\Pc_i$ for every $1 \le i \le m$. 
  (The case $i=m$ is obviously true.)
  Condition  \ref{def:M-TM-ideal}(1*) is clear. 
  It suffices to show   Condition  \ref{def:M-TM-ideal}(2).
  First consider $i=m-1$. 
  Fix $x \in \max(\Pc_{m-1}) \subseteq \max(\Pc)$.
  Denote $L := \Pc_{\le x}$ and  $L_{m-1} := (\Pc_{m-1})_{\le x}$. 
  Therefore $L$ and $L_{m -1}$ are geometric lattices sharing top element $x$. 
  We need to show that  there is some $y \in \max(\Qc)$ such that $y$ is a modular element in $L_{m-1}$. 
  Since $\Qc$ is a TM-ideal of $\Pc$, there exists $y' \in \max(\Qc)$ such that $y'$ is a modular element in $L$. 
  If $x \not> a_m$ then $L = L_{m -1}$. 
  We may take $y = y'$.
  If  $x > a_m$ then $L_{m -1} = L(A(L) \setminus \{a_{m}\}) $. 
  Since $y' \not> a_m$, we must have that $y' \in L_{m-1}$ and $y'$ is also a modular element in $L_{m-1}$ by \cite[Lemma 4.6]{JT84}. 
Again take $y = y'$.
  Use this argument repeatedly, we may show the claim holds true for every $1 \le i \le m-1$. 
  
  Now we show that $\Pc_i \in  \mathbf{IP}$ with $\exp(\Pc_i) = \exp(\Qc) \cup \{i\}$ for every $1 \le i \le m$. 
  Note that by Lemma \ref{lem:>=a}, $\Qc \simeq \Pc_{\ge a}$ for any $a \in A(\Pc)  \setminus A (\Qc)$. 
  It is not hard to check that $(\Pc_1, \Pc'_1 = \Qc, \Pc''_1 \simeq \Qc)$ is the triple of posets with distinguished atom $a_1$, and that $a_1$ is a separator of $\Pc_1$. 
  Hence $\Pc_1 \in  \mathbf{IP}$ with $\exp(\Pc_1) = \exp(\Qc) \cup \{1\}$ by Theorem \ref{thm:add-IP}. 
  Similarly, $(\Pc_2, \Pc'_2 = \Pc_1, \Pc''_2 \simeq \Qc)$ is the triple with distinguished atom $a_2$, and that $a_2$ is not a separator of $\Pc_2$. 
  Hence $\Pc_2 \in  \mathbf{IP}$ with $\exp(\Pc_2) = \exp(\Qc) \cup \{2\}$. 
  Use this argument repeatedly, we may show the claim holds true for every $1 \le i \le m$. 
  The case $i=m$ yields  $\Pc \in  \mathbf{IP}$ with $\exp(\Pc) = \exp(\Qc) \cup \{m\}$ as desired.

   \end{proof}

\begin{proof}[\tbf{Proof of  Theorem \ref{thm:SSS<IP-intro}}]
  Note that the trivial lattice is inductive. 
Apply Lemma \ref{lem:Q<P} repeatedly to the elements in any TM-chain of a strictly supersolvable poset $\Pc$. 
\end{proof}

\begin{example}\label{eg:Dowling}   
The \emph{Dowling posets} are proved to be strictly supersolvable \cite[Example 5.1.8]{BD22}. 
The poset of layers of the toric arrangement of an arbitrary ideal of a type $C$ root system with respect to the \emph{integer lattice} is also   strictly supersolvable (Theorem \ref{thm:CT-SSS}).
Hence these posets are inductive by Theorem \ref{thm:SSS<IP-intro}.
\end{example}

\begin{remark} 
\label{rem:JT-SSS<IP}
The main result of \cite{JT84} by Jambu and Terao mentioned in Remark \ref{rem:all-property} is a special case of our Theorem \ref{thm:SSS<IP-intro} when the poset is a geometric lattice. 
An induction table for a strictly supersolvable poset can easily  be constructed using the argument in the proof of Lemma \ref{lem:Q<P}.

The converse of Theorem \ref{thm:SSS<IP-intro} is not true in general.
There are many known examples of central hyperplane arrangements whose intersection lattices are inductive but not (strictly) supersolvable (see e.g., Theorem \ref{thm:ideal-free}). 
We will see in Corollary \ref{cor:BST-IA} and Theorem \ref{cor:BST-NSS} new examples from toric arrangements: The poset of layers of the toric arrangement of a type $B_\ell$ root system for $\ell \ge 3$ is inductive, but not supersolvable. That arises from type $B_2$ depicted in Figure \ref{fig:B2} below is inductive and supersolvable, but not strictly supersolvable. 

Thus for locally geometric posets, we have proved the following:
$$ \tbf{SSS}  \subsetneq   \tbf{IP} \subsetneq \tbf{DP}   \subsetneq \tbf{FR}.$$

Compared with the relation described in Remark \ref{rem:all-property}, supersolvable posets do not form a subclass of inductive posets. 
The poset of layers of the toric arrangement of a type $D_2$ root system (the subposet of the poset in Figure \ref{fig:B2} generated by $\{t_1t_2=1, t_1t_2^{-1}=1\}$) is supersolvable but not inductive. 

The containment $ \tbf{IP} \subsetneq \tbf{DP}$ is strict by an example from Remark \ref{rem:all-property}. 
It remains unknown to us whether or not there exists a divisional but not inductive poset among non-lattice, locally geometric posets. 
\end{remark}

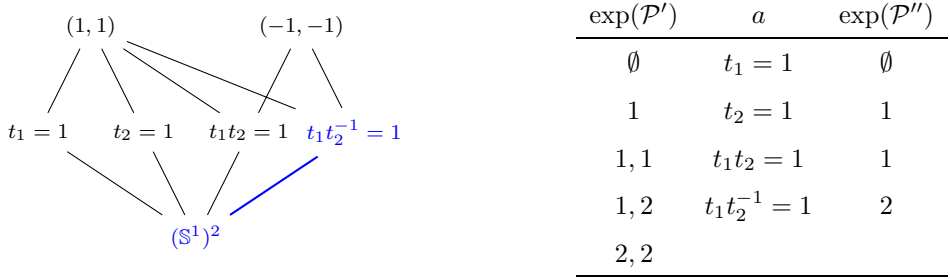
\begin{figure}[htbp]
\centering
\begin{subfigure}{.35\textwidth}
  \centering
\begin{tikzpicture}[scale=.7]
  \node[blue] (0) at (0,0) {\tiny $( \mathbb{S}^1)^2$};
  \node (2) at (-3,2) {\tiny $t_1=1$};
    \node (3) at (-1,2) {\tiny $t_2=1$};
      \node (4) at (1,2) {\tiny $t_1t_2=1$};
        \node[blue] (5) at  (3,2)  {\tiny $t_1t_2^{-1}=1$};
          \node (8) at  (-2,4) {\tiny $(1,1)$};
            \node (9) at  (2,4)  {\tiny $(-1,-1)$};
\draw (0)--(2)--(8)--(3)--(0)--(4)--(8)--(5)--(9)--(4);
\draw[thick, color=blue] (0)--(5);
\end{tikzpicture}
\label{fig:B2-poset}
\end{subfigure}%
\qquad
\begin{subfigure}{.45\textwidth}
  \centering
  {\footnotesize\renewcommand\arraystretch{1.5} 
\begin{tabular}{ccc}
\hline
$\exp(\Pc')$ &  $a$ &   $\exp(\Pc'')$  \\
\hline
$\emptyset$ &  $t_1=1$ & $\emptyset$ \\
$1$ &  $t_2=1$ & $1$ \\
$1,1$ &  $t_1t_2=1$ & $1$ \\
$1,2$ &  $t_1t_2^{-1}=1$ & $2$ \\
$2,2$ &    &   \\
\hline
\end{tabular}
}
  \label{fig:B2-table}
\end{subfigure}
\caption{The toric arrangement of a type $B_2$ root system with its poset $\Pc$ of layers (left) and an induction table for inductiveness (right). 
The  induction table is derived thanks to Theorem \ref{thm:add-IP} which deduces that $\Pc$ is inductive with exponents $\exp(\Pc) =  \{2,2\}$.
In addition, $\Pc$ is supersolvable with the elements of a rank-$1$ M-ideal colored in {\color{blue}{blue}}. 
However, $\Pc$ is not strictly supersolvable since it has no TM-ideal of rank $1$.
}
\label{fig:B2}
\end{figure}

%********************************************************************************************************
\section{Inductive and divisional abelian arrangements} 
\label{sec:ind-abel-arr}
We first recall preliminary concepts and results of \emph{abelian Lie group} arrangements, or abelian arrangements for short, following  \cite{TY19, LTY21, Bib22}.

Let $G$ be a finite-dimensional connected abelian Lie group, i.e., $G\simeq (\mathbb{S}^1)^a\times \R^b$ for some nonnegative integers $a,b \ge 0$. 
Denote $g:= \dim_\R(G) = a+b$.
Let $\Gamma \simeq \Z^\ell$ be a finite-rank free abelian group. 
We regard $T = \Hom(\Gamma, G) \simeq G^\ell$ with $\dim_\R(T) = g\ell$ as our ambient group. 
For $\alpha \in \Gamma \setminus\{0\}$ and $c \in G$, the \emph{abelian hyperplane} $H_{\alpha,c} :=H_{\alpha,c, G}$ associated to the pair $(\alpha,c)$ is defined by
\begin{equation*}
H_{\alpha, c}:=\{\varphi \in T\mid \varphi (\alpha)= c\} . 
\end{equation*}

Let $\A := \{(\alpha_1,c_1),\ldots, (\alpha_n,c_n) \} \subseteq (\Gamma \setminus\{0\}) \times G$ be a finite set.
We define the \emph{abelian arrangement} $\Ac:=\Ac(\A, G)$ as the collection of connected components of the abelian hyperplanes defined by $\A$ 
$$\Ac:=\{\mbox{connected components of } H_{\alpha, c}\mid (\alpha, c) \in \A\}.$$
We continue to use the notation $\varnothing_\ell$ to denote the empty abelian arrangement in $T \simeq G^\ell$. 
The arrangement $\Ac$ is called \emph{central} if $c_i = 0_G$ for all  $1 \le i \le n$.

When $G = \R^b$ and $\Gamma = \Z^\ell$, we obtain $\Ac$ as an arrangement of affine subspaces in $T \simeq \R^{b\ell}$, and in particular a real (or complex) affine hyperplane arrangement when $b = 1$ ($b = 2$, resp.). 
We sometimes call these hyperplane arrangements \emph{integral}  arrangements as the coefficients of the defining equation of any hyperplane are integer. 
When $G =  \mathbb{S}^1$ (or $G= \mathbb{C}^\times \simeq \mathbb{S}^1\times \R$) and $\Gamma = \Z^\ell$, we obtain an arrangement of real (complex, resp.) translated hypertori or toric arrangement.

For each $\Bc \subseteq \Ac$, denote
$$
H_{\Bc}:=\bigcap_{H\in\Bc}H.
$$
We agree that $H_{\varnothing}:= T$.

The \emph{intersection poset} $L:=L(\Ac)$ of $\Ac$ is defined by
\begin{equation*}
L:=\{\mbox{connected components of nonempty $H_{\Bc}$} \mid \Bc \subseteq \Ac\},
\end{equation*}
whose elements, called \emph{layers}, are ordered by reverse inclusion ($X \le_L Y$ if $X \supseteq Y$). 
Thus $L$ is a pure, ranked poset with a rank function $\rk(X) = \codim(X)/g$ for every $X \in L$. 
The minimal element of $L$ is $\hat 0 = T$, and the atoms of $L$ are the elements of $\Ac$.
 
 \begin{definition} 
\label{def:comb-prop-abe}
Similar to the case of a hyperplane arrangement in an arbitrary vector space, we also refer to the poset $L$ of layers as the \emph{combinatorics} of the abelian arrangement $\Ac$. 
Likewise, a \emph{combinatorial property} of abelian arrangements is defined analogously to Definition \ref{def:comb-prop}.
\end{definition}

Define $\rk(\Ac)$ to be the rank of $L$, i.e., the rank of a maximal element in $L$. 
The arrangement $\Ac$ is called \emph{essential} if $\rk(\Ac)=\ell$.

   \begin{theorem}[{\cite[Corollary 13.11]{Bib22}, \cite[Corollary 4.4.6]{BD22}}]
\label{thm:GP}
Let $\Ac$ be an abelian arrangement. Then $L(\Ac)$ is a geometric poset.
\end{theorem}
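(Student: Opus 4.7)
The plan is to verify the two defining conditions of Definition \ref{def:GP}: local geometricity of $L := L(\Ac)$, and the geometric exchange property for pairs with $\rk(x) < \rk(y)$.

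For local geometricity, I would fix $x \in L$, pick a base point $\varphi_0 \in x$, and show that $L_{\le x}$ is isomorphic to the intersection lattice of a central hyperplane arrangement in the tangent space at $\varphi_0$. Each layer $u \in L_{\le x}$ contains $x$ and is a connected component through $\varphi_0$ of a partial intersection of abelian hyperplanes $H_{\alpha, c}$ containing $\varphi_0$; once attention is restricted to a single connected component, joins and meets of such layers correspond to sums and intersections of the associated tangent subspaces $\ker(d\alpha)$. Since a single component of $u_1 \cap u_2$ contains $x$ (as $x$ is connected and $x \subseteq u_1 \cap u_2$), the join $u_1 \vee u_2$ inside $L_{\le x}$ is uniquely defined, so $L_{\le x}$ is a lattice. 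The intersection lattice of a central linear arrangement is the lattice of flats of a matroid and hence geometric, yielding the desired local geometricity.

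For the exchange condition, let $I = \{a_1, \dots, a_k\}$ with $k = \rk(y)$ and $y \in \bigvee I$, writing $a_i$ as a connected component of $H_{\alpha_i, c_i}$. Since $y$ is a connected component of $\bigcap_i H_{\alpha_i, c_i}$ and $\rk(y) = |I|$, the characters $\alpha_1, \dots, \alpha_k$ are $\mathbb{R}$-linearly independent in $\Gamma \otimes \mathbb{R}$. Writing $x$ as a component of an intersection whose defining characters span $V_x \subseteq \Gamma \otimes \mathbb{R}$, we have $\dim V_x = \rk(x) < k$, so at least one $\alpha_i \notin V_x$. For this $i$, a Pontryagin/Lie-algebra duality argument shows that $x \subseteq H_{\alpha_i, c_i}$ would force $\alpha_i \in V_x$, a contradiction; hence $x \not\subseteq a_i$, i.e., $a_i \not\le x$.

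The main obstacle is establishing $a_i \vee x \ne \emptyset$, namely that $x$ meets the \emph{specific} connected component $a_i$, not merely the coset $H_{\alpha_i, c_i}$. Here I would argue that $\alpha_i \notin V_x$ makes $\varphi \mapsto \varphi(\alpha_i)$ a nontrivial homomorphism when restricted to the coset parallel to $x$, whose image is a connected subgroup of $G$; combined with the reference point $y \in a_i$ supplying the value $c_i$ in the correct component via translation within $T$, connectedness of $G = (\mathbb{S}^1)^a \times \mathbb{R}^b$ then yields a point of $x \cap a_i$. If the first choice of $\alpha_i \notin V_x$ fails to produce a witness in the intended component, one exchanges with another index outside $V_x$ (such indices exist in number at least $k - \rk(x) \ge 1$). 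This component-tracking argument is the delicate step where the topology of $G$ as a connected abelian Lie group is genuinely used, and it is precisely the content of \cite[Corollary 13.11]{Bib22} and \cite[Corollary 4.4.6]{BD22}.
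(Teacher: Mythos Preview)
The paper does not prove Theorem \ref{thm:GP}; it is quoted verbatim as a citation of \cite[Corollary 13.11]{Bib22} and \cite[Corollary 4.4.6]{BD22} with no accompanying argument. So there is nothing in the paper to compare against, and your sketch must stand on its own.

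Your treatment of local geometricity is fine and is essentially Lemma \ref{lem:loc-res}: the interval $L_{\le x}$ coincides with the intersection lattice of the localization $\Ac_x$, a central linear arrangement, hence a geometric lattice.

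The exchange part has a genuine gap at the last step. You correctly reduce to finding $i$ with $\alpha_i \notin V_x$ and then must show $x \cap a_i \ne \emptyset$. But your justification is muddled: you invoke $y$ as ``supplying the value $c_i$ in the correct component via translation within $T$,'' yet $y$ bears no relation to $x$ beyond the inequality $\rk(x) < \rk(y)$ (no comparability is assumed), so there is no translation in $T$ carrying information from $y$ to $x$. Your fallback of ``exchanging with another index outside $V_x$'' does not help either: you yourself note there are at least $k - \rk(x) \ge 1$ such indices, and when equality holds there is nothing to exchange with. The correct mechanism is direct and uses no exchange: writing $\alpha_i = m\gamma_i$ with $\gamma_i \in \Gamma$ primitive, one still has $\gamma_i \notin V_x$, so the evaluation $\varphi \mapsto \varphi(\gamma_i)$ has surjective differential when restricted to the identity component parallel to $x$; since $G$ is connected, this Lie-group homomorphism is therefore surjective onto $G$, and its translate $x \to G$ hits the specific value cutting out the component $a_i$. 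Your paragraph gestures at connectedness of $G$ but never isolates this surjectivity-of-the-differential step, which is where the work actually happens; the hedging about exchanges confirms the argument was not closed.
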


The \emph{characteristic polynomial} $\chi_{\Ac}(t)$ of $\Ac$ is defined by 
\begin{equation*}
\chi_{\Ac}(t):= \sum_{X \in L}\mu(T, X)t^{\dim_\R(X)}.
\end{equation*}
Here $\mu:=\mu_L$ is the M\"{o}bius function of $L$. 

\begin{remark} 
\label{rem:chapol-AnL}
 Note that $\chi_{\Ac}(t) =t^{g(\ell - \rk(\Ac))} \cdot \chi_{L}(t^g)$ which has degree $g\ell$. 
 In particular, if $\Ac$ is essential and $g=1$, then $\chi_{\Ac}(t) = \chi_{L}(t)$.
\end{remark}

\begin{definition} 
\label{def:SS-abearr}
Similar to Definition \ref{def:SS-arr}, we call an abelian arrangement $\Ac$   \emph{supersolvable} (resp.,  \emph{strictly supersolvable)} if its intersection poset $L(\Ac)$ is supersolvable (resp., strictly supersolvable).
Denote also by $\mathbf{SS}$ and $\mathbf{SSS}$ the classes of supersolvable and strictly supersolvable abelian arrangements, respectively.
\end{definition}

\begin{definition} 
\label{def:abearr-exp}
Similar to Definition \ref{def:hyparr-exp}, 
we call an abelian arrangement $\Ac$ \emph{factorable} if its intersection poset $L(\Ac)$ is factorable.
In this case, we call the roots of $\chi_{\Ac}(t^{1/g})$ the \emph{(combinatorial) exponents} of $\Ac$ and use the notation $\exp(\Ac)$ to denote the multiset of exponents. 
Denote also by $\mathbf{FR}$ the class of factorable abelian arrangements.
\end{definition} 

By Remark \ref{rem:chapol-AnL}, $\Ac \in \mathbf{FR}$ if and only if there are positive integers $d_1,\ldots,d_{ \rk(\Ac)} \in \Z_{>0}$  such that 
$$\chi_{\Ac}(t) =t^{g(\ell - \rk(\Ac))} \cdot  \prod_{i=1}^{ \rk(\Ac)}(t^g-d_i).$$
 In this case, 
$$ \exp(\Ac) = \{0^{\ell - \rk(\Ac)} \} \cup \exp(L(\Ac)) .$$

\begin{definition}[{\cite[Definitions 13.5 and 13.7]{Bib22}}] 
\label{def:loc-res}
  For each $X \in L$, define 
  $$\A_X : = \{ \alpha \in \Gamma \mid (\alpha, c) \in \A \mbox{ and } H_{\alpha, c} \supseteq X \mbox{ for some } c \in G\}.$$
  The \emph{localization} $\Ac_X$ of $\Ac$ at $X$ is defined as the collection of linear subspaces $H_{\alpha, 0} \subseteq \Hom(\Gamma, \R^g)$ with $\alpha \in \A_X$.
  
For   $H\in\Ac$, the \emph{restriction} $\Ac^H$ of $\Ac$ to $H$ is defined by
$$\Ac^H := \{ \mbox{connected components of nonempty } K \cap H \mid K \in\Ac \setminus \{H\} \}.$$ 
Thus $\Ac^H$ is an arrangement in $H \simeq G^{\ell-1}$.
\end{definition}

The following is well-known, e.g., used in the proof of \cite[Theorem 13.10]{Bib22}. 
 \begin{lemma}
\label{lem:loc-res}
 Let $\Ac$ be an abelian arrangement.  
 Let $X \in L(\Ac)$ and $H\in\Ac$. Then $L(\Ac_X) \simeq L(\Ac)_{\le X}$ and $L(\Ac^H) = L(\Ac)_{\ge H}$.
 \end{lemma}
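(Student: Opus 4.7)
The two assertions are logically independent; I would handle the restriction identity first and then the localization isomorphism.

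For $L(\Ac^H) = L(\Ac)_{\ge H}$, the plan is simply to unpack definitions. An element of $L(\Ac)_{\ge H}$ is (by reverse inclusion) a layer $Y \subseteq H$ realized as a connected component of $\bigcap_{K \in \Bc'} K$ for some $\Bc' \subseteq \Ac$. Since $Y \subseteq H$, we may assume $H \in \Bc'$ without changing the component containing $Y$; writing $\Bc = \Bc' \setminus \{H\}$ then exhibits $Y$ as a connected component of $H \cap \bigcap_{K \in \Bc} K$, i.e.\ as an element of $L(\Ac^H)$. The reverse containment is immediate, since any component arising from the restriction is a layer of $\Ac$ contained in $H$. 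Both posets carry the reverse-inclusion order, so equality holds at the level of posets.

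For $L(\Ac_X) \simeq L(\Ac)_{\le X}$, I would linearize at $X$ via the universal cover. Pick $p \in X$ and a lift $\tilde p$ under $\pi \colon \Hom(\Gamma, \R^g) \to T$, which is a local diffeomorphism. For each $\alpha \in \A_X$ there is by definition some $c_\alpha \in G$ with $X \subseteq H_{\alpha, c_\alpha}$, and the local lift of $H_{\alpha, c_\alpha}$ through $\tilde p$, after translation by $-\tilde p$, is exactly the linear hyperplane $H_{\alpha, 0} \subseteq \Hom(\Gamma, \R^g)$ that defines $\Ac_X$. Consequently, for any $B \subseteq \A_X$, the connected component of $\bigcap_{\alpha \in B} H_{\alpha, c_\alpha}$ containing $X$ is the $\pi$-image of a neighbourhood of $\tilde p$ inside the affine subspace $\tilde p + L_B$, where $L_B := \bigcap_{\alpha \in B} H_{\alpha, 0}$.

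Define $\Phi \colon L(\Ac)_{\le X} \to L(\Ac_X)$ by $Y \mapsto L_{B(Y)}$, with $B(Y) := \{\alpha \in \A_X \mid Y \subseteq H_{\alpha, c_\alpha}\}$. The remaining steps are: (i) verify that $B(Y)$ depends only on $Y$ and not on a choice of defining intersection; (ii) check bijectivity, with inverse sending $L_B$ to the connected component through $X$ of $\bigcap_{\alpha \in B} H_{\alpha, c_\alpha}$; (iii) verify order-preservation, which follows because both posets are ordered by reverse inclusion and $\Phi$ is defined through the same subset $B \subseteq \A_X$. The principal obstacle is the injectivity in (ii): distinct layers $Y_1, Y_2$ through $X$ should yield distinct subsets $B(Y_i)$. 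This reduces, via the local diffeomorphism at $\tilde p$, to the analogous classical fact for the central hyperplane arrangement $\Ac_X$ in $\Hom(\Gamma, \R^g)$, namely that a layer of a central arrangement is determined by the set of hyperplanes containing it.
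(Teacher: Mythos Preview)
The paper does not actually prove this lemma; it simply states that the result is well-known and refers to its use in the proof of \cite[Theorem 13.10]{Bib22}. Your proposal therefore supplies an argument where the paper gives none.

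Your approach is the standard one and is correct. The restriction identity $L(\Ac^H)=L(\Ac)_{\ge H}$ is indeed a literal equality of sets obtained by unwinding definitions, and your reduction is right: any layer $Y\subseteq H$ is a connected component of $H\cap\bigcap_{K\in\Bc}K$ for some $\Bc\subseteq\Ac\setminus\{H\}$, hence a connected component of an intersection of elements of $\Ac^H$, and conversely. For the localization, linearizing at a point of $X$ via the covering $\pi\colon\Hom(\Gamma,\R^g)\to T$ is exactly the right idea; the translation by $-\tilde p$ identifies the germ of the arrangement near $p$ with the central linear arrangement $\Ac_X$, and your map $Y\mapsto L_{B(Y)}$ is the induced poset isomorphism. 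The one point worth making explicit is that for each $\alpha\in\A_X$ the constant $c_\alpha$ with $X\subseteq H_{\alpha,c_\alpha}$ is uniquely determined (namely $c_\alpha=\varphi(\alpha)$ for any $\varphi\in X$), so $B(Y)$ and the proposed inverse are unambiguous.
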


Fix $H\in\Ac$, define the \emph{deletion} $\Ac':=\Ac \setminus \{H\}$ as an arrangement in $T$, and  $\Ac'' := \Ac^H.$
We call $(\Ac, \Ac', \Ac'')$ the triple of arrangements associated to $H$.
From Definition \ref{def:triple-posets} and Lemma \ref{lem:loc-res}, we have
that $ L(\Ac') = L'$ and $ L(\Ac'') = L''$. 
   \begin{theorem}
\label{thm:DC-formula-A(G)}
Let $\Ac$ be a nonempty abelian arrangement and $H\in\Ac$.  
The following deletion-restriction formula holds
\begin{equation*}
\chi_{\Ac}(t)=
\chi_{\Ac'}(t)-
\chi_{\Ac''}(t). 
\end{equation*}
\end{theorem}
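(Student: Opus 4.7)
The plan is to reduce the arrangement-level identity to the poset-level deletion–restriction formula of Theorem \ref{thm:LGP-DC}, applied to the intersection poset $L := L(\Ac)$ with the distinguished atom $H$, and then translate back using the relation $\chi_{\Ac}(t) = t^{g(\ell - \rk(\Ac))}\,\chi_L(t^g)$ from Remark \ref{rem:chapol-AnL}.

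By Theorem \ref{thm:GP}, $L$ is a geometric (hence locally geometric) poset, so Theorem \ref{thm:LGP-DC} applies with $a = H \in A(L) = \Ac$. Writing $L' := L(A(L) \setminus \{H\})$ and $L'' := L_{\ge H}$, this yields
$$\chi_L(s) = s^{\epsilon(H)}\chi_{L'}(s) - \chi_{L''}(s),$$
where $\epsilon(H) \in \{0,1\}$ and $\rk(L) = \rk(L') + \epsilon(H)$. To identify these with the arrangement-level objects, note that by Lemma \ref{lem:loc-res} we have $L'' = L(\Ac'')$, while $L' = L(\Ac')$ since both posets consist of $\hat 0$ together with the connected components of nonempty intersections of atoms in $\Ac \setminus \{H\}$.

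Setting $r := \rk(\Ac) = \rk(L)$, Remark \ref{rem:separator} gives $\rk(\Ac') = r - \epsilon(H)$, and purity of $L$ yields $\rk(\Ac'') = r - 1$ (some maximal element of $L$, of rank $r$, lies above $H$). Consequently, by Remark \ref{rem:chapol-AnL},
\begin{align*}
\chi_\Ac(t) &= t^{g(\ell - r)}\,\chi_L(t^g),\\
\chi_{\Ac'}(t) &= t^{g(\ell - r + \epsilon(H))}\,\chi_{L'}(t^g),\\
\chi_{\Ac''}(t) &= t^{g((\ell-1)-(r-1))}\,\chi_{L''}(t^g) = t^{g(\ell - r)}\,\chi_{L''}(t^g).
\end{align*}
Substituting $s = t^g$ into the poset identity and multiplying through by $t^{g(\ell - r)}$ then gives the desired $\chi_\Ac(t) = \chi_{\Ac'}(t) - \chi_{\Ac''}(t)$.

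The only mild obstacle is the bookkeeping of the various ranks and powers of $t$: the factor $s^{\epsilon(H)}$ appearing in Theorem \ref{thm:LGP-DC} is precisely what is needed to absorb the possible rank jump $\rk(\Ac) - \rk(\Ac') = \epsilon(H)$ between the ambient-dimension prefactors of $\chi_\Ac$ and $\chi_{\Ac'}$. Once this matching is checked, the deletion–restriction identity follows immediately.
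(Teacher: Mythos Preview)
Your proof is correct and follows exactly the same approach as the paper, which simply instructs the reader to apply Theorems \ref{thm:LGP-DC}, \ref{thm:GP} and Remark \ref{rem:chapol-AnL}; you have spelled out the rank bookkeeping and the identification $L(\Ac') = L'$, $L(\Ac'') = L''$ (stated in the paper just before the theorem) that make this application go through.
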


  \begin{proof}
Apply Theorems \ref{thm:LGP-DC}, \ref{thm:GP} and Remark \ref{rem:chapol-AnL}.
\end{proof}

We are ready to introduce the concepts of inductive and divisional abelian arrangements. 

\begin{definition} 
\label{def:IA}
The class $\mathbf{IA}$ of \emph{inductive (abelian) arrangements} is the smallest class of abelian arrangements which satisfies
\begin{enumerate}[(1)]
\item $\varnothing_\ell\in\mathbf{IA}$ for $\ell \ge 1$,
\item $\Ac \in \mathbf{IA}$ if there exists $H \in \Ac$ such that $\Ac'' \in \mathbf{IA}$, $\Ac' \in \mathbf{IA}$, and $\chi_{\Ac'}(t) = (t^g-d)\cdot\chi_{\Ac''}(t) $ for some $d \in \Z$.
\end{enumerate}
\end{definition}

\begin{definition} 
\label{def:DA}
The class $\mathbf{DA}$ of \emph{divisional (abelian) arrangements} is the smallest class of abelian arrangements which satisfies
\begin{enumerate}[(1)]
\item $\varnothing_\ell\in\mathbf{DA}$ for $\ell \ge 1$,
\item $\Ac \in \mathbf{DA}$ if there exists $H \in \Ac$ such that $\Ac'' \in \mathbf{DA}$ and $\chi_{\Ac}(t) = (t^g-d)\cdot\chi_{\Ac''}(t) $ for some $d \in \Z$.
\end{enumerate}
\end{definition}

We now show that inductiveness and divisionality depend only on the combinatorics of arrangements.

   \begin{theorem}
\label{thm:IP=IA-DP=DA}
Let $\Ac$ be an abelian arrangement. 
Then $\Ac \in \mathbf{IA}$ (resp., $\mathbf{DA}$) if and only if $L(\Ac) \in \mathbf{IP}$ (resp., $\mathbf{DP}$). 
\end{theorem}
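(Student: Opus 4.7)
The plan is to prove both equivalences ($\mathbf{IA}$/$\mathbf{IP}$ and $\mathbf{DA}$/$\mathbf{DP}$) in parallel by induction on $|\Ac|$. The atoms of $L(\Ac)$ are precisely the elements of $\Ac$, so choosing a hyperplane $H \in \Ac$ in the arrangement-side recursion is literally the same as choosing the corresponding atom $a \in A(L(\Ac))$ in the poset-side recursion. Lemma \ref{lem:loc-res} then ensures that the recursive data match: $L(\Ac')$ equals the subposet $L(\Ac)'$ generated by $A(L(\Ac)) \setminus \{a\}$, and $L(\Ac'') = L(\Ac)_{\ge a} = L(\Ac)''$. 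The base case is immediate, since $\Ac = \varnothing_\ell$ exactly when $L(\Ac)$ has no atoms, i.e.\ $L(\Ac) = \{\hat 0\}$.

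The technical bridge is the identity $\chi_\Ac(t) = t^{g(\ell - r)}\chi_{L(\Ac)}(t^g)$ from Remark \ref{rem:chapol-AnL}, where $r = \rk(\Ac)$. Setting $s = t^g$ and using $\rk(L'') = r-1$ together with $\rk(L') = r - \epsilon(a)$, the condition $\chi_{\Ac'}(t) = (t^g - d)\chi_{\Ac''}(t)$ from Definition \ref{def:IA} rewrites as
\begin{equation*}
s^{\epsilon(a)}\, \chi_{L'}(s) = (s - d)\,\chi_{L''}(s).
\end{equation*}
When $a$ is not a separator ($\epsilon(a) = 0$), both $\chi_{L'}$ and $\chi_{L''}$ are monic of degrees $r$ and $r-1$, so the displayed equation holds for some $d \in \Z$ if and only if $\chi_{L''}$ divides $\chi_{L'}$, which is exactly the $\mathbf{IP}$ condition. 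The $\mathbf{DA}$/$\mathbf{DP}$ equivalence is cleaner: applying the same identity to $\Ac$ rather than to $\Ac'$ yields $\chi_L(s) = (s - d)\chi_{L''}(s)$ with no $s^{\epsilon(a)}$ factor, and a monic-degree comparison finishes it.

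The only real subtlety is the separator case $\epsilon(a) = 1$ of the $\mathbf{IA}$/$\mathbf{IP}$ direction. Here $\chi_{L'}$ and $\chi_{L''}$ both have degree $r - 1$ and are monic, so $\chi_{L''} \mid \chi_{L'}$ forces $\chi_{L'} = \chi_{L''}$. On the arrangement side, the identity becomes $s\chi_{L'}(s) = (s - d)\chi_{L''}(s)$; evaluating at $s = 0$ and using $\chi_{L''}(0) \ne 0$ (a consequence of the strict sign alternation in Lemma \ref{lem:SAIS}) forces $d = 0$, after which $\chi_{L'} = \chi_{L''}$ is immediate. Conversely, given $\chi_{L'} = \chi_{L''}$ one simply takes $d = 0$. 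With this observation, both parities of $\epsilon(a)$ are handled and the induction closes.
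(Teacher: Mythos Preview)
Your argument is in spirit the same as the paper's, and the characteristic-polynomial bookkeeping (the substitution $s=t^g$, the $s^{\epsilon(a)}$ factor, the separator case with $\chi_{L''}(0)\ne 0$ forcing $d=0$) is exactly right. However, the induction scheme as stated does not close: you induct on $|\Ac|$ alone, and in the recursive step you must apply the induction hypothesis to $\Ac''$. For genuinely abelian arrangements (e.g.\ toric ones) a single $K\in\Ac'$ can meet $H$ in several connected components, so $|\Ac''|\ge|\Ac|$ is possible, and the induction hypothesis is unavailable. A concrete instance: in $(\mathbb{S}^1)^2$ with $H_1=\{t_1=1\}$ and $H_2=\{t_1t_2^2=1\}$, the restriction to $H_1$ has two atoms, so $|\Ac|=|\Ac''|=2$.

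The fix is exactly what the paper does: for $\mathbf{IA}/\mathbf{IP}$ use double induction on the pair $(\rk(\Ac),|\Ac|)$, noting that $|\Ac'|<|\Ac|$ while $\rk(\Ac'')<\rk(\Ac)$; for $\mathbf{DA}/\mathbf{DP}$ the recursion involves only $\Ac''$, so induction on $\rk(\Ac)$ alone suffices. With that one change your proof is complete and matches the paper's line for line.
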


  \begin{proof}
  We show the assertion for inductiveness by double induction on $\rk(\Ac)$ and $|\Ac|$. 
 The assertion for divisionality can be proved by induction on $\rk(\Ac)$ by a similar (and easier) argument. 

The assertion is clearly true when $\rk(\Ac)=0$ or $|\Ac|=0$ (i.e., $\Ac =\varnothing$). 
Suppose $\rk(\Ac) \ge 1$ and $|\Ac|\ge 1$.
Suppose $\Ac \in \mathbf{IA}$. 
Then there exists $H \in \Ac$ such that $\Ac'' \in \mathbf{IA}$, $\Ac' \in \mathbf{IA}$, and $\chi_{\Ac'}(t) = (t^g-d)\cdot\chi_{\Ac''}(t) $ for some $d \in \Z$.
Note that $|\Ac'| < |\Ac|$ and $\rk(\Ac'') < \rk(\Ac)$. 
By the induction hypothesis, $L'' = L(\Ac'') \in \mathbf{IP}$ and $L' = L(\Ac') \in \mathbf{IP}$. 
Moreover, if $\rk(\Ac) = \rk(\Ac') + 1$, then by Remark \ref{rem:chapol-AnL}, 
$$t^g \cdot \chi_{L'}(t^g) = (t^g -d)\cdot \chi_{L''}(t^g).$$
Hence $\chi_{L'}(t) = \chi_{L''}(t)$ since $t \nmid  \chi_{L''}(t)$. 
Similarly, if $\rk(\Ac) = \rk(\Ac')$, then $\chi_{L'}(t) =(t-d) \chi_{L''}(t)$. 
In either case, $\chi_{L''}(t)$ divides $\chi_{L'}(t)$. 
Thus $L(\Ac) \in \mathbf{IP}$. 
A similar argument shows that if $L \in \mathbf{IP}$ then $\Ac \in \mathbf{IA}$, which completes the proof.
\end{proof}

   \begin{corollary}
\label{cor:IA-DA-combinatorial}
The property of being inductive or divisional of an abelian arrangement is a combinatorial property. 
\end{corollary}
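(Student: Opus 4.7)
The plan is to reduce the corollary immediately to the two preceding results: Theorem \ref{thm:IP=IA-DP=DA} bridges the arrangement-theoretic and poset-theoretic notions, and Proposition \ref{prop:IP-DP-preserved} ensures that the poset-theoretic notions are invariant under isomorphism.

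First I would unpack Definition \ref{def:comb-prop-abe}: I need to show that if $\Ac_1$ and $\Ac_2$ are two abelian arrangements with $L(\Ac_1) \simeq L(\Ac_2)$, then $\Ac_1 \in \mathbf{IA}$ iff $\Ac_2 \in \mathbf{IA}$, and analogously for $\mathbf{DA}$.

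Next, the key chain of equivalences is the following: by Theorem \ref{thm:IP=IA-DP=DA}, $\Ac_i \in \mathbf{IA}$ iff $L(\Ac_i) \in \mathbf{IP}$ for $i = 1, 2$. Since $L(\Ac_1) \simeq L(\Ac_2)$ and each $L(\Ac_i)$ is locally geometric by Theorem \ref{thm:GP}, Proposition \ref{prop:IP-DP-preserved} yields $L(\Ac_1) \in \mathbf{IP}$ iff $L(\Ac_2) \in \mathbf{IP}$. Chaining these equivalences gives $\Ac_1 \in \mathbf{IA}$ iff $\Ac_2 \in \mathbf{IA}$. The argument for $\mathbf{DA}$ replaces $\mathbf{IP}$ by $\mathbf{DP}$ throughout and is otherwise identical.

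There is no real obstacle here: the entire content is packaged in the earlier Theorem \ref{thm:IP=IA-DP=DA} and Proposition \ref{prop:IP-DP-preserved}, so the corollary is essentially a formal restatement. The only thing to take care of is being explicit that Theorem \ref{thm:GP} guarantees the posets in question are locally geometric, which is the hypothesis under which Proposition \ref{prop:IP-DP-preserved} was stated. A two-sentence proof, presented as a direct chain of ``iff'' implications, should suffice.
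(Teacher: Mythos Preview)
Your proposal is correct and matches the paper's own proof, which simply cites Proposition \ref{prop:IP-DP-preserved} and Theorem \ref{thm:IP=IA-DP=DA}. Your explicit mention of Theorem \ref{thm:GP} to verify the local geometricity hypothesis is a small but welcome clarification.
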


  \begin{proof}
 It follows from Proposition \ref{prop:IP-DP-preserved} and Theorem \ref{thm:IP=IA-DP=DA} above.
\end{proof}

\begin{remark} 
\label{rem:all-property-abe}
By Remark \ref{rem:JT-SSS<IP} and Theorem \ref{thm:IP=IA-DP=DA}, we have the following:
$$ \tbf{SSS}  \subsetneq   \tbf{IA} \subseteq \tbf{DA} \subsetneq \tbf{FR}.$$

It is an open question to us whether or not the containment $ \tbf{IA} \subseteq \tbf{DA}$ is strict. 
This is related to the question in the last paragraph in Remark \ref{rem:JT-SSS<IP}.
The example of a hyperplane arrangement that is divisionally free but not inductively free in Remark \ref{rem:all-property} is not an integral arrangement. 
\end{remark}

An abelian arrangement is inductive if it can be constructed from the empty arrangement by adding an element ($=$\,a connected component of a hyperplane) one at a time with the aid of  the following ``addition" theorem at each addition step. 
It thus also makes sense to speak of an induction table for an inductive arrangement in a similar way as of inductive posets in Section \ref{sec:ind-poset}. 
   \begin{theorem}
\label{thm:IA-add}
Let $\Ac \ne \varnothing$ be an abelian arrangement   in $T\simeq G^\ell$ and let $H \in \Ac$. 
If $\Ac'' \in \mathbf{IA}$ with $\exp(\Ac'') = \{d_1,\ldots,d_{\ell-1}\}$ and $\Ac' \in \mathbf{IA}$ with $\exp(\Ac') = \{d_1,\ldots,d_{\ell-1},d_\ell\}$, then $\Ac \in \mathbf{IA}$ with $\exp(\Ac) = \{d_1,\ldots,d_{\ell-1},d_\ell+1\}$.
\end{theorem}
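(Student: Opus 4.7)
The plan is to unpack the exponent hypotheses into explicit factored forms of $\chi_{\Ac''}$ and $\chi_{\Ac'}$, then apply the deletion-restriction formula to read off $\chi_{\Ac}$, and finally verify the defining conditions of $\mathbf{IA}$.

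First I would observe that $\exp(\Ac'') = \{d_1,\ldots,d_{\ell-1}\}$ is a multiset of $\ell - 1$ positive integers, and since $\Ac''$ lives in $H \simeq G^{\ell-1}$, this forces $\rk(\Ac'') = \ell - 1$. The factored form recorded after Definition \ref{def:abearr-exp} then gives
\[
\chi_{\Ac''}(t) = \prod_{i=1}^{\ell-1}(t^g - d_i).
\]
Analogously, $\exp(\Ac')$ has $\ell$ entries while $\Ac' \subseteq T \simeq G^\ell$, so $\rk(\Ac') = \ell$ and
\[
\chi_{\Ac'}(t) = \prod_{i=1}^{\ell}(t^g - d_i) = (t^g - d_\ell)\cdot\chi_{\Ac''}(t).
\]

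Next I would apply the deletion-restriction formula of Theorem \ref{thm:DC-formula-A(G)}:
\[
\chi_{\Ac}(t) = \chi_{\Ac'}(t) - \chi_{\Ac''}(t) = \bigl((t^g - d_\ell) - 1\bigr)\chi_{\Ac''}(t) = \bigl(t^g - (d_\ell+1)\bigr)\prod_{i=1}^{\ell-1}(t^g - d_i).
\]
Since $\Ac \supseteq \Ac'$, we also have $\rk(\Ac) = \ell$, and comparing with the factored form after Definition \ref{def:abearr-exp} reads off $\exp(\Ac) = \{d_1,\ldots,d_{\ell-1}, d_\ell + 1\}$.

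Finally, the identity $\chi_{\Ac'}(t) = (t^g - d_\ell)\chi_{\Ac''}(t)$ with $d_\ell \in \Z$, combined with the hypotheses $\Ac', \Ac'' \in \mathbf{IA}$, verifies exactly the conditions of Definition \ref{def:IA}(2), giving $\Ac \in \mathbf{IA}$. The argument contains no real obstacle: it is a direct unpacking of definitions plus a single application of deletion-restriction. The only subtlety worth checking is that the rank is preserved at each step so that the factored forms from Definition \ref{def:abearr-exp} carry no extra factors of $t$, and this follows automatically from the cardinalities of the exponent multisets in the hypotheses.
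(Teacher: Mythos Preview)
Your proof is correct and matches the paper's own (one-line) argument, which simply cites Definition~\ref{def:IA} and Theorem~\ref{thm:DC-formula-A(G)}; you have just unpacked the details. One harmless imprecision: the $d_i$ need not all be positive (so $\Ac''$ and $\Ac'$ need not be essential), but the factorization $\chi_{\Ac''}(t) = \prod_{i=1}^{\ell-1}(t^g - d_i)$ you use holds regardless, since a zero exponent contributes a factor $t^g - 0 = t^g$ that absorbs the leading power of $t$ in the formula after Definition~\ref{def:abearr-exp}.
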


  \begin{proof}
It follows directly from Definition \ref{def:IA} and Theorem \ref{thm:DC-formula-A(G)}.
\end{proof}

We complete this section by describing an  arrangement theoretic characterization for (strict) supersolvability.

 \begin{definition}
\label{def:M-TM-ideal-arr}
Given a subarrangement $\Bc$ of an abelian arrangement $\Ac$, we say $\Bc$ is an \emph{M-ideal} of $\Ac$ if $L(\Bc)$ is a proper order ideal of $L(\Ac)$, and for any two distinct $H_1,H_2 \in \Ac \setminus \Bc$ and every connected component $C$ of the intersection $H_1\cap H_2$  there exists $H_3 \in  \Bc$ such that   $C \subseteq H_3$.
More strongly, an M-ideal $\Bc$ is called a \emph{TM-ideal} of $\Ac$ if 
\begin{enumerate}
    \item[(*)] for any $X \in L(\Bc)$ and $H  \in \Ac \setminus \Bc$ the intersection $X\cap H$ is connected. 
\end{enumerate}

\end{definition}

   \begin{theorem}
\label{thm:SS=SS}
Let $\Ac$ be an arrangement of rank $r$ in $T\simeq G^\ell$. 
Then $\Ac$ is supersolvable  (resp., strictly supersolvable)  (Definition \ref{def:SS-abearr}) if and only if there is a chain, called  an \emph{M-chain} (resp., a \emph{TM-chain)}
$$\varnothing= \Ac_0 \subseteq \Ac_1 \subseteq \cdots \subseteq  \Ac_r =\Ac,$$
such that each $\Ac_i$ is an M-ideal  (resp., a TM-ideal)  of $\Ac_{i+1}$.
\end{theorem}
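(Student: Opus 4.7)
The plan is to build a bijective dictionary between subarrangements $\Bc \subseteq \Ac$ and pure, join-closed order ideals of the geometric poset $L(\Ac)$, and then verify that this dictionary identifies (T)M-ideals in the sense of Definition \ref{def:M-TM-ideal-arr} with (T)M-ideals of rank one less in the sense of Definition \ref{def:M-TM-ideal}. Concretely, to each $\Bc \subseteq \Ac$ I associate the subposet $L(\Bc) \subseteq L(\Ac)$, and to each pure, join-closed, proper order ideal $\Qc$ I associate its atom set $A(\Qc) \subseteq \Ac$. Both assignments are mutually inverse: $A(L(\Bc)) = \Bc$ because the atoms of $L(\Bc)$ are precisely the elements of $\Bc$, while $L(A(\Qc)) = \Qc$ uses that every $x \in \Qc$ is the join in the geometric lattice $L(\Ac)_{\le x}$ of the atoms below it (all of which lie in $A(\Qc)$ since $\Qc$ is an order ideal), together with the join-closedness of $\Qc$. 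Purity and join-closedness of $L(\Bc)$ are automatic, as $L(\Bc)$ is itself the intersection poset of the abelian arrangement $\Bc$ and the connected components of any intersection of members of $\Bc$ belong to $L(\Bc)$.

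With this dictionary in place, the core step is to match the M-ideal conditions. Since $L(\Ac)$ is geometric by Theorem \ref{thm:GP}, Lemma \ref{lem:GPMI} characterizes a pure, join-closed, proper order ideal $\Qc \subseteq L(\Ac)$ with $\rk(\Qc) = \rk(L(\Ac)) - 1$ as an M-ideal via the two-atom condition: for any distinct $a_1, a_2 \in A(L(\Ac)) \setminus A(\Qc)$ and every $x \in a_1 \vee a_2$, there exists $a_3 \in A(\Qc)$ with $x > a_3$. Via the dictionary (atoms $\leftrightarrow$ hyperplanes of $\Ac$, elements of $a_1 \vee a_2$ $\leftrightarrow$ connected components of $H_1 \cap H_2$, and $x > a_3 \leftrightarrow C \subseteq H_3$), this is exactly the two-hyperplane condition in Definition \ref{def:M-TM-ideal-arr}. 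For the TM refinement, condition (*) of Definition \ref{def:M-TM-ideal-arr} --- that $X \cap H$ is connected for every $X \in L(\Bc)$ and $H \in \Ac \setminus \Bc$ --- translates directly to condition (1*) of Definition \ref{def:M-TM-ideal}, namely $|a \vee y| = 1$; combined with the M-ideal part (which already supplies condition (2) via Lemma \ref{lem:GPMI}), this yields the poset TM-ideal condition. Hence $\Bc$ is an arrangement (T)M-ideal of $\Ac$ if and only if $L(\Bc)$ is a poset (T)M-ideal of $L(\Ac)$ of rank one less, and the theorem will follow by applying this equivalence step by step along the chain via $\Ac_i \leftrightarrow L(\Ac_i)$.

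The main technical point will be rank bookkeeping. Definition \ref{def:M-TM-ideal-arr} carries no explicit rank constraint, whereas Definition \ref{def:SSS} requires $\rk(L(\Ac_i)) = i$ and Lemma \ref{lem:GPMI} is invoked under the same hypothesis. I would resolve this by showing that whenever $\Bc$ is an arrangement M-ideal of $\Ac$, one has $\rk(L(\Ac)) \le \rk(L(\Bc)) + 1$. For this, given any $X \in L(\Ac)$, write $X$ as the join of $\rk(X)$ atoms; then iteratively use the exchange property in each local geometric lattice $L(\Ac)_{\le X}$ together with the two-hyperplane condition to replace any pair of atoms from $\Ac \setminus \Bc$ by a single atom from $\Bc$ whose presence preserves the join. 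This reduces the decomposition to at most one atom from $\Ac \setminus \Bc$, giving $\rk(X) \le \rk(L(\Bc)) + 1$. Since the chain $\varnothing = \Ac_0 \subsetneq \cdots \subsetneq \Ac_r = \Ac$ has $r$ strict inclusion steps covering a total rank increase of $r$, each step must then increase rank by exactly $1$, so Lemma \ref{lem:GPMI} applies cleanly at every stage and the equivalence closes by a straightforward induction on $r$.
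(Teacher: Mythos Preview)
Your proposal is correct and follows essentially the same route as the paper: both arguments use the dictionary $\Bc \leftrightarrow L(\Bc)$ (with inverse $\Qc \mapsto A(\Qc)$), invoke Theorem~\ref{thm:GP} to ensure $L(\Ac)$ is geometric, and then appeal to Lemma~\ref{lem:GPMI} to translate the two-hyperplane condition of Definition~\ref{def:M-TM-ideal-arr} into the poset M-ideal condition, with the TM refinement handled by matching condition~(*) with condition~(1*). Your treatment is in fact more explicit about the bijectivity of the dictionary than the paper's. One remark: your separate rank-bookkeeping step (the exchange argument showing $\rk(L(\Ac)) \le \rk(L(\Bc)) + 1$) is redundant, since the ``if'' direction of Lemma~\ref{lem:GPMI} already yields $\rk(\Qc) = \rk(\Pc) - 1$ as part of its conclusion; the paper simply cites the lemma for this.
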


  \begin{proof} 
Observe that if $\Bc\subseteq \Ac$, then $L(\Bc)$ is a pure, join-closed ideal of $L(\Ac)$.  
Note also that the poset of layers of an abelian arrangement is a geometric poset by Theorem \ref{thm:GP}. 
Thus by Lemma \ref{lem:GPMI}, if $\Bc$ is an M-ideal (resp., a TM-ideal) of $\Ac$, then $L(\Bc)$ is an M-ideal (resp., a TM-ideal) of $L(\Ac)$ with $\rk(\Bc) = \rk(\Ac)-1$. 
Therefore, if there exists an M-chain (resp., a TM-chain)
$$\varnothing= \Ac_0 \subseteq \Ac_1 \subseteq \cdots \subseteq  \Ac_r =\Ac,$$
then $L(\Ac)$ is supersolvable  (resp., strictly supersolvable)  with an M-chain (resp., a TM-chain)
$$\{\hat 0\} = L(\varnothing)  \subseteq L(\Ac_1) \subseteq \cdots \subseteq  L(\Ac_r) =L(\Ac),$$

Conversely, if $\Qc$ is an M-ideal (resp., a TM-ideal)  of $L(\Ac)$ with $\rk(\Qc) = \rk(\Ac)-1$, then again by  Lemma \ref{lem:GPMI}, the set $A(\Qc)$ of atoms is an M-ideal (resp., a TM-ideal) of $\Ac$. 
Thus if $L(\Ac)$ is supersolvable  (resp., strictly supersolvable), then any M-chain (resp., TM-chain) of $L(\Ac)$ induces  an M-chain (resp., a TM-chain) for $\Ac$.
\end{proof}

%********************************************************************************************************
\section{Localization of hyperplane and toric arrangements}
\label{sec:local}

In this section, we discuss the operation of localizing at a layer of an abelian arrangement in the sense of Definition \ref{def:loc-res}. 
Note from Remark \ref{rem:S} that (strict) supersolvability is closed under taking localization:  If $\Ac \in \mathbf{SS}$ (resp., $\Ac \in \mathbf{SSS}$), then $\Ac_X \in \mathbf{SS}$ (resp., $\Ac_X \in \mathbf{SSS}$)   for every $X \in L(\Ac)$. 
We will see that in general it is not the case for inductiveness or divisionality.
More explicitly, we give an example of an inductive toric arrangement with a non-factorable localization.

First let us recall from the previous section the definition of central (real) hyperplane and toric arrangements as abelian arrangements when the Lie group $G$ is $\R$ and $\mathbb{S}^1$, respectively. 
Let $\A$ be a finite set of integral vectors in $\Z^\ell$.
Given a vector $\alpha=(a_1,\ldots,a_\ell) \in \A$, we may define the hyperplane 
\begin{equation*}
H_{\alpha, \R}:=\{x \in \R^\ell \mid a_1x_1 + \cdots + a_\ell x_\ell= 0\},
\end{equation*}
and the hypertorus
\begin{equation*}
H_{\alpha, \mathbb{S}^1}:=\{t \in (\mathbb{S}^1)^\ell \mid t_1^{a_1} \cdots  t_\ell^{a_\ell}= 1\}.
\end{equation*}

The set $\A \subseteq \Z^\ell$ defines the central hyperplane arrangement
$$\Hc:=\{ H_{\alpha, \R}\mid \alpha \in \A\}.$$
and the central toric arrangement
$$\Ac:=\{\mbox{connected components of } H_{\alpha, \mathbb{S}^1} \mid \alpha \in \A\}.$$

Alternatively, given an integral matrix $S \in \Mat_{\ell \times m}(\Z)$, we may view each column as a vector in $\Z^\ell$ so that we may define the central hyperplane and toric arrangements from $S$ as above.

\begin{example}\label{eg:diShi321}   
Let $S \in \Mat_{3 \times 6}(\Z)$ be an integral matrix defined as below:
\begin{equation}
\label{eq:matrixS}
S =\begin{bmatrix}
1 & 0  & 1  & 0 & 1  & 0  \\
0 & 1 &  1 & 0 & 0 & 1\\
0 & 0 &  0 & 1 & -1 & -1
\end{bmatrix}.
\end{equation}

Let $\Hc_S$ and $\Ac_S$ be the central hyperplane and toric arrangements defined by $S$, respectively. 
Note that by definition of localization (Definition \ref{def:loc-res}) we may write $\Hc_S = (\Ac_S)_X$ where $X$ denotes the layer $(1,1,1) \in L(\Ac_S)$.

In fact, $\Hc_S$ is linearly isomorphic to the \emph{essentialization} of the \emph{cone} of the \emph{digraphic Shi arrangement} defined by the path $3 \to 2 \to 1$ in \cite[Figure 3]{Atha98}. 
The characteristic polynomial of $\Hc_S$ is given by
$$\chi_{\Hc_S}(t) = (t-1)(t^2-5t+7),$$
which implies that $\Hc_S$ is not divisional hence not inductive.

However, we may show that $\Ac_S$ is inductive with exponents $\{2,2,2\}$. 
Let $H_i$ denote the (connected) hypertorus defined by the $i$-th column of the matrix $S$. 
The poset of layers of $\Ac_S$ and an induction table are given in Figure \ref{fig:toric-inductive}. 
(Observe also that $\Ac_S$ is not locally supersolvable since the localization $\Hc_S$ is not supersolvable by the preceding discussion.)
\end{example}

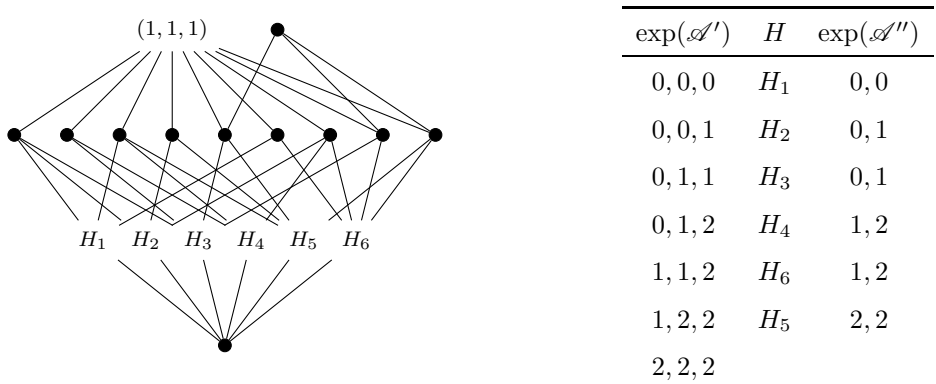
\begin{figure}[htbp]
\centering
\begin{subfigure}{.35\textwidth}
  \centering
\begin{tikzpicture}[scale=.7]

  \node (1) at (-2.5,2) {\tiny $H_1$};
    \node (2) at (-1.5,2) {\tiny $H_2$};
    \node (3) at (-.5,2) {\tiny $H_3$};
      \node (4) at (.5,2) {\tiny $H_4$};
        \node (5) at  (1.5,2)  {\tiny $H_5$};
                \node (6) at  (2.5,2) {\tiny $H_6$};              
                                \node (16) at  (-1,6) {\tiny $(1,1,1)$};

\draw (0,0) node[v](0){};

\draw (-4,4) node[v](7){};
\draw (-3,4) node[v](8){};
\draw (-2,4) node[v](9){};
\draw (-1,4) node[v](10){};
\draw (0,4) node[v](11){};
\draw (1,4) node[v](12){};
\draw (2,4) node[v](13){};
\draw (3,4) node[v](14){};
\draw (4,4) node[v](15){};

\draw (1,6) node[v](17){};

\draw (0)--(1);
\draw (0)--(2);
\draw (0)--(3);
\draw (0)--(4);
\draw (0)--(5);
\draw (0)--(6);

 \draw (1)--(7);
 \draw (1)--(9);
 \draw (1)--(12);
 
  \draw (2)--(7);
 \draw (2)--(10);
 \draw (2)--(13);
 
   \draw (3)--(7);
 \draw (3)--(8);
 \draw (3)--(11);
   \draw (3)--(14);

  \draw (4)--(8);
 \draw (4)--(9);
 \draw (4)--(13); 
 
    \draw (5)--(9);
 \draw (5)--(10);
 \draw (5)--(11);
   \draw (5)--(15);
 
    \draw (6)--(12);
 \draw (6)--(13);
 \draw (6)--(14);
   \draw (6)--(15);
   
   \draw (16)--(7);
   \draw (16)--(8);
   \draw (16)--(9);
   \draw (16)--(10);
   \draw (16)--(11);
   \draw (16)--(12);
      \draw (16)--(13);
         \draw (16)--(14);
            \draw (16)--(15);
            
                  \draw (17)--(11);
         \draw (17)--(14);
            \draw (17)--(15);
 
\end{tikzpicture}
\label{fig:222-poset}
\end{subfigure}%
\qquad
\begin{subfigure}{.45\textwidth}
  \centering
{\footnotesize\renewcommand\arraystretch{1.5} 
\begin{tabular}{ccc}
\hline
$\exp(\Ac')$ &  $H$ &   $\exp(\Ac'')$  \\
\hline
$0,0,0$ &  $H_1$ & $0,0$ \\
$0,0,1$ &  $H_2$ & $0,1$ \\
$0,1,1$ &  $H_3$ & $0,1$ \\
$0,1,2$ &  $H_4$ & $1,2$ \\
$1,1,2$ &  $H_6$ & $1,2$ \\
$1,2,2$ &  $H_5$ & $2,2$ \\
$2,2,2$ & & \\
\hline
\end{tabular}
}
  \label{tab:123}
\end{subfigure}
\caption{The poset of layers of the toric arrangement $\Ac_S$ defined by matrix $S$ in \eqref{eq:matrixS} and an induction table for its inductiveness.
}
\label{fig:toric-inductive}
\end{figure}

It happens quite often that the hyperplane arrangement defined by a matrix is inductive, but the toric arrangement defined by the same matrix is not (see the next section).
Example \ref{eg:diShi321} above deduces that the converse is also possible. 
This is a rare, perhaps counter-intuitive example that toric arrangement could be inductive, while hyperplane arrangement cannot be. 

%********************************************************************************************************
\section{Application to toric arrangements of ideals of root systems}
\label{sec:rootsystem}
Our standard reference for root systems is \cite{B68}.
Let $\Phi$ be an irreducible (crystallographic) root system in $V=\R^\ell$. 
Fix a positive system $\Phi^+ \subseteq \Phi$ and the associated set of simple roots (base) $\Delta := \{\alpha_1,\ldots,\alpha_\ell \} \subseteq\Phi^+$. 

Define the partial order $\ge$ on $\Phi^+$ such that $\beta_1 \ge\beta_2$ if and only if $\beta_1-\beta_2 = \sum_{i=1}^\ell n_i\alpha_i$ with all $n_i \in \Z_{\ge 0}$. 
A subset $\I\subseteq\Phi^+$ is called an \emph{ideal} if, for $\beta_1,\beta_2 \in \Phi^+$, $\beta_1 \ge \beta_2, \beta_ 1 \in \I$ then $\beta_2 \in \I$. 

For $\beta = \sum_{i=1}^\ell n_i\alpha_i \in \Phi^+$, the \emph{height}  of $\beta$ is defined by $ {\rm ht}(\beta) := \sum_{i=1}^\ell n_i $. 
Let $\I$ be an ideal of  $ \Phi^+$ and set $M:=\max\{{\rm ht}(\beta)\mid \beta \in \I\}$. 
Let $t_k: = |\{ \beta \in \I \mid  {\rm ht}(\beta)=k \}|$ for $1 \le k \le M$. 
The sequence $(t_1, \ldots , t_k, \ldots , t_{M})$ is called the \textit{height distribution} of $\I$. 
The  \textit{dual partition} $\mathrm{DP}(\I)$ of the height distribution of $\I$ is defined as the multiset of nonnegative integers
$$\mathrm{DP}(\I) := \{0^{\ell-t_1},1^{t_1-t_2},\ldots , M^{t_{M}}\},$$ 

For each $\Psi\subseteq\Phi^+$, let $S_\Psi$ denote the coefficient matrix of $\Psi$ with respect to the base $\Delta$, i.e., $S_\Psi = [s_{ij}]$ is the $\ell \times |\Psi |$ integral matrix that satisfies 
$$\Psi = \left\{\sum_{i=1}^\ell s_{ij}\alpha_i \mid 1 \le j \le |\Psi | \right\}.$$

Note that the matrix $S_\Psi$ depends only upon $\Phi$.

 \begin{definition}
\label{def:root-lattice}
Following the previous section, we define  $\Ac_{\Psi } := \Ac_{S_\Psi }(\Phi) $ and  $\Hc_{\Psi } := \Hc_{S_\Psi }(\Phi) $ as the central toric and hyperplane arrangements defined by $S_\Psi $ respectively. 
We call these arrangements the arrangements with respect to the \emph{root lattice}.
\end{definition}

   \begin{theorem}[\cite{ST06, ABCHT16, Hul16, Roh17, CRS19}]
\label{thm:ideal-free}
If $\I$ is an ideal of an irreducible root system $\Phi$, then $ \Hc_{\I } $ is inductive with exponents $\mathrm{DP}(\I)$. 
Moreover, $  \Hc_{\I }  $ is supersolvable if $\Phi$ is $A_\ell$, $B_\ell$, $C_\ell$, or $G_2$.
\end{theorem}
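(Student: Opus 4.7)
The plan is to prove both claims by induction on $|\I|$, using the addition-deletion machinery — for inductive freeness in the first half and for (strict) supersolvability in the second. The base case $\I = \emptyset$ gives the empty arrangement, which is trivially inductively free and supersolvable with exponents $\{0^\ell\} = \mathrm{DP}(\emptyset)$.

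For the general inductive-freeness statement, I would pick a maximal element $\beta \in \I$ with respect to the height order. Then $\I' := \I \setminus \{\beta\}$ is again an ideal, so by induction $\Hc_{\I'}$ is inductively free with $\exp(\Hc_{\I'}) = \mathrm{DP}(\I')$. The central task, following Sommers--Tymoczko \cite{ST06} and its refinement in \cite{ABCHT16}, is to identify the restriction $\Hc_\I^{H_\beta}$, after essentialization, with another ideal arrangement in a root subsystem of smaller rank. Once this identification is in place, the induction hypothesis applies to the restriction, and the divisibility $\chi_{\Hc_\I^{H_\beta}}(t) \mid \chi_{\Hc_{\I'}}(t)$ is read off from the product formula $\chi_{\Hc_\Psi}(t) = \prod_{d \in \mathrm{DP}(\Psi)}(t-d)$, which depends only on the height distribution of the ideal $\Psi$. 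The addition-deletion criterion in Definition \ref{def:IF-arr} then yields $\Hc_\I \in \mathbf{IF}$ with $\exp(\Hc_\I) = \mathrm{DP}(\I)$, the exponent multiset being updated in the obvious way when $\beta$ is reinserted.

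For the supersolvability claim in types $A_\ell$, $B_\ell$, $C_\ell$, and $G_2$, I would construct an explicit M-chain of modular flats tailored to each type. In type $A_{\ell-1}$, identify $\Phi^+$ with pairs $(i,j)$, $1 \le i < j \le \ell$, so that $\I$ corresponds to a Ferrers-type shape; then filter by successively adjoining the hyperplanes indexed by the first available ``column'' in a suitable lexicographic order and verify that each resulting flat is modular in the next. For $B_\ell$ and $C_\ell$, adapt this using the signed-coordinate description of positive roots (handling the short/long root pairs and the sign-change roots in tandem). For $G_2$ there are only finitely many ideals and modular flags can be exhibited by direct inspection, as in \cite{Hul16, Roh17, CRS19}.

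The principal technical obstacle lies in the restriction analysis for the inductive-freeness step: showing that $\Hc_\I^{H_\beta}$ is again, up to linear isomorphism, an ideal arrangement in a smaller-rank root system is subtle and requires a careful description of the residues of roots modulo $\beta^\perp$, together with the ``signed'' ideal-structure bookkeeping from \cite{ABCHT16} to track how $\mathrm{DP}(\I)$ evolves. A secondary difficulty is that the explicit M-chain construction for supersolvability is genuinely type-dependent; indeed, the analogous statement is known to fail for type $D_\ell$ and for the remaining exceptional types, so no uniform argument can cover all irreducible $\Phi$.
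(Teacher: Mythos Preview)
The paper does not supply its own proof of Theorem~\ref{thm:ideal-free}; the result is quoted from the literature with the citations \cite{ST06, ABCHT16, Hul16, Roh17, CRS19} and used as a black box. So there is no in-paper argument to compare your proposal against.

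As for the proposal itself, the outline is in the right spirit but contains a real gap in the inductive-freeness step. Your central claim---that the restriction $\Hc_\I^{H_\beta}$ is, after essentialization, again an ideal arrangement in a root subsystem of smaller rank---is not true in general; restrictions of ideal arrangements are typically \emph{not} ideal arrangements, and this is precisely why the problem remained open for a decade after \cite{ST06}. The proof of freeness in \cite{ABCHT16} sidesteps restrictions entirely by a ``Multiple Addition Theorem'' that adds all roots of a given maximal height at once, checking a local intersection condition rather than identifying each restriction. The subsequent upgrade to \emph{inductive} freeness in \cite{Roh17, CRS19} does require controlling restrictions, but this is done through considerably more intricate case analysis (and, in the exceptional types, computer verification) than your sketch indicates. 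Your supersolvability outline for the classical types and $G_2$ is closer to what is actually done in \cite{Hul16, CRS19}, though the $B_\ell$/$C_\ell$ chains need more care than ``handling the sign-change roots in tandem'' suggests.
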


In contrast to the hyperplane arrangement case, the toric arrangement $\Ac_{\I}$ is not factorable for most cases even when $\I =\Phi^+$.
It is known that  the characteristic polynomial of the central toric arrangement defined by an arbitrary matrix $S$ coincides with the last constituent of the \emph{characteristic quasi-polynomial} $\chi^{\rm \quasi}_S(q)$ defined by $S$ \cite[Corollary 5.6]{LTY21}.
Furthermore, an explicit computation shows that the last constituent of $\chi^{\rm \quasi}_{S_{\Phi^+} }(q)$  factors with all integer roots if and only if $\Phi$ is $A_\ell$, $B_\ell$ or $C_\ell$ \cite{KTT10, Su98}. 
Thus, $\Ac_{{\Phi^+} }$ is factorable if and only if $\Phi$ is of one of these three types.

Even more is true: If $\I$  is an ideal of an irreducible root system of type $A$, $B$ or $C$, then $\Ac_{{\I} }$ is factorable whose combinatorial exponents can be described by the \emph{signed graph} associated to $\I$ \cite{T19}. 
Our third main result Theorem \ref{thm:ABC-IA-intro} strengthens this result.  
Furthermore, we give an explicit description of the exponents of $ \Ac_{\I } $ derived from an explicit induction table. 
This description turns out to be equivalent to the ones in \cite{T19}.
We also give a characterization for supersolvability of $\Ac_{{\Phi^+} }$ when $\Phi$ is of type $B$ (Theorem \ref{cor:BST-NSS}).

\begin{proof}[\tbf{Proof of  Theorem \ref{thm:ABC-IA-intro}}]
It follows from Corollary \ref{cor:ideal-SS-A},  Theorem \ref{thm:CS-IA} and Corollary \ref{cor:BST-IA} below.
\end{proof}

The proof for the type $A$ case in Theorem \ref{thm:ABC-IA-intro} is a simple consequence of Theorem \ref{thm:ideal-free}, which we give below.

   \begin{corollary}
\label{cor:ideal-SS-A}
If $\I$ is an ideal of a root system of type $A$, then the toric arrangement $ \Ac_{\I } $ with respect to the root lattice is strictly supersolvable (equivalently, supersolvable) hence inductive with exponents $\mathrm{DP}(\I)$. 
\end{corollary}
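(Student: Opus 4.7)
The plan is to reduce the toric case to the hyperplane case by exhibiting a poset isomorphism $L(\Ac_\I) \simeq L(\Hc_\I)$ specific to type $A$. For $\Phi = A_\ell$ with simple roots $\alpha_1, \ldots, \alpha_\ell$, every positive root has the form $\alpha_i + \alpha_{i+1} + \cdots + \alpha_j$, so for any $\Psi \subseteq \Phi^+$ the coefficient matrix $S_\Psi$ with respect to $\Delta$ is an \emph{interval matrix}: a $\{0,1\}$-matrix whose columns have consecutive $1$'s. Such matrices are classically known to be totally unimodular, so every square submatrix has determinant in $\{-1, 0, 1\}$.

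Using this, for any subset $B \subseteq \I$ the intersection $\bigcap_{\alpha \in B} H_{\alpha, \mathbb{S}^1}$ is the kernel of the homomorphism $(\mathbb{S}^1)^\ell \to (\mathbb{S}^1)^{|B|}$ encoded by $S_B^\top$, and its connectedness is equivalent to the cokernel of $S_B : \Z^{|B|} \to \Z^\ell$ being torsion-free. Total unimodularity guarantees that the Smith normal form of $S_B$ has only $0$'s and $1$'s, so every such intersection is connected (and nonempty by centrality). Thus the natural assignment $H_B \mapsto \bigcap_{\alpha \in B} H_{\alpha, \mathbb{S}^1}$ is a rank-preserving bijection, yielding a poset isomorphism $L(\Hc_\I) \simeq L(\Ac_\I)$.

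I would then invoke Theorem \ref{thm:ideal-free}, which asserts that $\Hc_\I$ is supersolvable, hence $L(\Hc_\I)$ is a supersolvable geometric lattice. By Remark \ref{rem:S}, on the class of geometric lattices $\mathbf{SS} = \mathbf{SSS}$, so $L(\Hc_\I)$ is strictly supersolvable. Transporting this along the isomorphism shows $L(\Ac_\I)$ is strictly supersolvable, i.e., $\Ac_\I \in \mathbf{SSS}$ in the sense of Definition \ref{def:SS-abearr}. Theorem \ref{thm:SSS<IP-intro} now gives $L(\Ac_\I) \in \mathbf{IP}$, and Theorem \ref{thm:IP=IA-DP=DA} translates this to $\Ac_\I \in \mathbf{IA}$. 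Because the characteristic polynomials agree under the isomorphism, I conclude $\exp(\Ac_\I) = \exp(\Hc_\I) = \mathrm{DP}(\I)$.

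The only step where the type $A$ hypothesis plays a role is the connectivity claim; the rest is a formal application of the machinery already developed. I expect this connectivity step, based on total unimodularity of interval matrices, to be the main (but routine) technical point.
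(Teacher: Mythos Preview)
Your proposal is correct and follows essentially the same route as the paper's proof: both establish that every intersection in the toric arrangement is connected (so $L(\Ac_\I)\simeq L(\Hc_\I)$ is a geometric lattice), then invoke Theorem~\ref{thm:ideal-free} for supersolvability and Remark~\ref{rem:S} for the equivalence $\mathbf{SS}=\mathbf{SSS}$ on geometric lattices. The only difference is that you justify the connectivity claim explicitly via total unimodularity of the interval matrix $S_\Psi$, whereas the paper simply asserts it as ``not hard to see''; your extra detail is welcome but not a different strategy.
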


  \begin{proof}
It is not hard to see that for any $\Psi\subseteq\Phi^+(A_\ell)$, each layer in $L(\Ac_{\Psi }(A_\ell))$ is connected. 
Thus $L(\Ac_{\Psi }(A_\ell)) \simeq L(\Hc_{\Psi }(A_\ell))$ which is a geometric lattice. 
By Remark \ref{rem:S}, its supersolvability and strict supersolvability are equivalent.
Moreover, $ \Ac_{\I } $ is indeed supersolvable with exponents $\mathrm{DP}(\I)$ by Theorem \ref{thm:ideal-free}. 
\end{proof}

Hence we are left with the computation on types $B$ and $C$.
First we need a construction of root systems of these types via a choice of basis for $V$ following \cite[Chapter VI, \S4]{B68}.

Let $\E : =\{\epsilon_1, \ldots, \epsilon_{\ell}\}$ be an orthonormal basis for $V$. 
For $\ell \ge 1$,
$$\Phi(B_\ell)  = \{\pm\epsilon_i \,(1 \le i  \le \ell),\pm(\epsilon_i \pm \epsilon_j)  \,(1 \le i < j \le \ell )\}$$
 is an irreducible root system of type $B_{\ell}$. 
We may choose a positive system
$$\Phi^+(B_\ell)  = \{\epsilon_i \,(1 \le i  \le \ell), \epsilon_i \pm \epsilon_j \,(1 \le i < j \le \ell )\}.$$
Define $\alpha_i := \epsilon_i - \epsilon_{i+1}$ for $1 \le i  \le \ell-1$, and $\alpha_{\ell} := \epsilon_{\ell}$. 
Then $\Delta(B_\ell)  = \{\alpha_1,\ldots, \alpha_{\ell}\}$ is the base associated to $\Phi^+(B_\ell) $. 
We may express
\begin{align*}
\Phi^+(B_\ell)  & =  \Bigl\{ \epsilon_i=\sum_{i\le k \le \ell }\alpha_k \,(1 \le i \le \ell), \epsilon_i-\epsilon_j=\sum_{i\le k < j }\alpha_k \,(1 \le i <  j \le \ell), \\
& \epsilon_i+\epsilon_j=\sum_{i\le k<j }\alpha_k+2 \sum_{j\le k \le \ell }\alpha_k\,(1 \le i < j \le \ell) \Bigr\}.
\end{align*}

For $\Psi\subseteq\Phi^+(B_\ell)$, write $T_\Psi = [t_{ij}]$ for the coefficient matrix of $\Psi$ with respect to the basis $\E$. 
The matrices $T_\Psi$ and $S_\Psi$ are related by $T_\Psi=P(B_\ell)\cdot S_\Psi$, where $P(B_\ell) $ is an unimodular matrix of size $\ell \times \ell$ given by
$$P(B_\ell) =\begin{bmatrix}
1 &   &   & &  \\
-1 & 1 &   &  & \\
  & -1 &   &  & \\
  &   & \ddots  &  & \\
    &   &  & 1 & \\
      &   &  & -1 & 1
\end{bmatrix}.$$

Similarly, an irreducible root system of type $C_{\ell}$ for $\ell \ge 1$  is given by
\begin{align*}
\Phi(C_\ell)  &= \{\pm2\epsilon_i \,(1 \le i  \le \ell),\pm(\epsilon_i \pm \epsilon_j)  \,(1 \le i < j \le \ell )\}, \\
\Phi^+(C_\ell)  &= \{2\epsilon_i \,(1 \le i  \le \ell), \epsilon_i \pm \epsilon_j \,(1 \le i < j \le \ell )\}, \\
\Delta(C_\ell) &=\{\alpha_i = \epsilon_i - \epsilon_{i+1}\,(1 \le i  \le \ell-1),\, \alpha_{\ell} =2\epsilon_{\ell} \} ,\\
\Phi^+(C_\ell)  &= \{ 2\epsilon_i=2\sum_{i\le k< \ell }\alpha_k+\alpha_\ell \,(1 \le i \le \ell), \epsilon_i-\epsilon_j=\sum_{i\le k<j }\alpha_k \,(1 \le i < j \le \ell),  \\
& \epsilon_i+\epsilon_j=\sum_{i\le k<j }\alpha_k+2 \sum_{j\le k <\ell }\alpha_k+\alpha_\ell\,(1 \le i < j \le \ell) \}.
\end{align*}
$$P(C_\ell) =\begin{bmatrix}
1 &   &   & &  \\
-1 & 1 &   &  & \\
  & -1 &   &  & \\
  &   & \ddots  &  & \\
    &   &  & 1 & \\
      &   &  & -1 & 2
\end{bmatrix}.$$

\begin{example}
\label{ex:B2C2}   
Let $\Phi=B_2$ with $\Phi^+= \{\alpha_1 = \epsilon_1 - \epsilon_2 ,\alpha_2 = \epsilon_2,  \alpha_1+\alpha_2 = \epsilon_1, \alpha_1+2\alpha_2= \epsilon_1 + \epsilon_2\}$ where $\Delta=\{\alpha_1,\alpha_2\}$ and $\E =\{\epsilon_1,  \epsilon_2\}$.
 The coefficient matrices of  $\Phi^+$ w.r.t. $\Delta$ and $\E$ are given by 
$$
S_{\Phi^+} = 
\begin{pmatrix}
1 & 0 & 1 &   1 \\
0 & 1 & 1 &   2 
\end{pmatrix},
\quad
T_{\Phi^+} = 
\begin{pmatrix}
1 & 0 & 1 &   1 \\
-1 & 1 & 0 &   1 
\end{pmatrix}.
$$
 Let $\Phi=C_2$. 
  The coefficient matrix of  $\Phi^+$ w.r.t. $\Delta$ is $S_{\Phi^+}$ above with rows switched (this is not the case when $\ell \ge 3$). 
 The coefficient matrix of  $\Phi^+$ w.r.t. $\E =\{\epsilon_1,  \epsilon_2\}$  is given by 
$$
T_{\Phi^+} = 
\begin{pmatrix}
1 & 0 & 1 &   2 \\
-1 & 2 & 1 &   0 
\end{pmatrix}.
$$
\end{example}

 \begin{definition}
\label{def:integer-lattice}
Let $\Phi = B_\ell$ or $C_\ell$. 
For $\Psi\subseteq\Phi^+$, denote by  $\Ac_{T_\Psi }$ and  $\Hc_{T_\Psi }$ the central toric and hyperplane arrangements defined by the matrix $T_\Psi $, respectively. 
We call these arrangements the arrangements with respect to the \emph{integer lattice}.
\end{definition}

\begin{remark} 
\label{rem:A-const}
Since the matrix $P(B_\ell)$ is unimodular, for every $\Psi\subseteq\Phi^+(B_\ell)$ we have an isomorphism of posets of layers: $L( \Ac_{{\Psi}  }) \simeq L( \Ac_{T_\Psi } )$ (see e.g., \cite[\S5]{PP21}).
However, $\det P(C_\ell) = 2$. In general, $L( \Ac_{{\Psi} }) \not\simeq L( \Ac_{T_\Psi })$  for $\Psi\subseteq\Phi^+(C_\ell)$ (although $L( \Hc_{{\Psi} }) \simeq L( \Hc_{T_\Psi } )$).

A positive system $\Phi^+(A_{\ell-1})$ of an irreducible root system $\Phi$ of type $A_{\ell-1}$ for $\ell \ge 2$ can be defined as the ideal of $\Phi^+(B_\ell)$ (or $\Phi^+(C_\ell)$) generated by $ \epsilon_1-\epsilon_\ell=\sum_{k=1}^{\ell-1}\alpha_k$. 
Thus $L( \Ac_{{\Psi}  }) \simeq L( \Ac_{T_\Psi } )$ for every $\Psi\subseteq\Phi^+(A_{\ell-1})$.
\end{remark}

To describe the exponents of  $\Ac_{\I}$ when $\Phi$ is $B_\ell$ or $C_\ell$, we need information from the signed graph associated to $\I$.

\begin{definition} 
\label{def:SG}
Let $\Phi = B_\ell$ or $C_\ell$. 
For $\Psi\subseteq\Phi^+$ and $1 \le i \le \ell$, define the subset $E_i = E_i(\Psi)  \subseteq \Psi$ by 
$$E_i := E^+_i \sqcup E^-_i, \, \mbox{where}\, 
E^{\pm}_i := \{ \epsilon_i\pm\epsilon_j \in \Psi \mid i <j \} \subseteq \Psi.
$$
For $\alpha \in E_i$, let $H_\alpha$ denote the hypertorus defined by $\alpha$. 
For example, $\alpha =  \epsilon_i+\epsilon_j $ defines the hypertorus $H_\alpha =\{t_i t_j =1\}$.
We then define the subarrangement $\Bc_i = \Bc_i(\Psi)\subseteq \Ac_{{\Psi} }$ by 
$$\Bc_i  :=  \Bc^+_i \sqcup  \Bc^+_i ,\, \mbox{where}\, \Bc^{\pm}_i  := \{H_\alpha \mid \alpha \in E^{\pm}_i  \} \subseteq \Ac_\Psi.
$$
Finally, define $b^{\pm}_i  : = |\Bc^{\pm}_i |$ and $b_i : = |\Bc_i | = b^+_i + b^-_i$.
\end{definition}

In the language of  signed graphs (e.g., following \cite[\S5]{Z81}), the elements in $E^+_i(\Psi)$ and $E^-_i(\Psi)$ correspond to the \emph{positive} and  \emph{negative edges} of the signed graph defined by $\Psi$, respectively.

It is not hard to see that for each ideal $\I$ of $\Phi^+(B_\ell)$ or $\Phi^+(C_\ell)$, the elements of the dual partition $\mathrm{DP}(\I)$ can be expressed in terms of $b_i(\I)$'s and vice versa. 
However, the numbers $b_i$'s are a bit more convenient for our subsequent discussion. 

%********************************************************************************************************
\subsection{Type \texorpdfstring{$C$}{C}.}
We first present the results on type $C$ as the proofs are simpler than those on type $B$. 
We begin by proving a lemma which serves as a template for some arguments later. 
   \begin{lemma}
\label{lem:CT-TM}
Let $\I \subseteq \Phi^+(C_\ell)$ be an ideal such that $E_1(\I) \ne \emptyset $. 
Define
$$\scD  :=  \begin{cases}
\I \setminus (E_1(\I) \cup \{2\epsilon_1\}) & \mbox{ if $2\epsilon_1 \in \I$}, \\
\I \setminus E_1(\I)  & \mbox{ otherwise}.
 \end{cases} 
 $$
Then $\scD$ can be regarded as an ideal of $\Phi^+(C_{\ell-1})$ and $  \Ac_{T_\scD}$ is a TM-ideal of $\Ac_{T_\I }$. 
\end{lemma}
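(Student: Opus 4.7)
The plan is to verify directly the conditions of Definition \ref{def:M-TM-ideal-arr}. Identify $\Phi^+(C_{\ell-1})$ with the positive subsystem of $\Phi^+(C_\ell)$ spanned by $\{\epsilon_2,\ldots,\epsilon_\ell\}$, with base $\{\alpha_2,\ldots,\alpha_{\ell-1},\alpha_\ell=2\epsilon_\ell\}$. Since $\alpha_1$ is the only simple root of $\Phi^+(C_\ell)$ with nonzero $\epsilon_1$-coefficient, a positive root lies in $\Phi^+(C_{\ell-1})$ precisely when its $\alpha_1$-coefficient vanishes, and the partial orders of the two root systems agree on such roots. By construction $\scD=\I\cap\Phi^+(C_{\ell-1})$, so $\scD$ inherits the ideal property from $\I$. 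To see that $L(\Ac_{T_\scD})$ is a proper order ideal of $L(\Ac_{T_\I})$, observe that every layer $X\in L(\Ac_{T_\scD})$ is a connected component of an intersection of hypertori defined by $\epsilon_1$-free roots; thus $t_1$ runs freely on $X$, and $X$ is contained in no atom of $\Ac_{T_\I}\setminus\Ac_{T_\scD}$. Hence any layer $Y\supseteq X$ in $L(\Ac_{T_\I})$ is cut out by roots of $\scD$, so $Y\in L(\Ac_{T_\scD})$; properness follows from $E_1(\I)\ne\emptyset$.

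Next, I verify the M-ideal pairwise condition by case analysis. The atoms of $\Ac_{T_\I}\setminus\Ac_{T_\scD}$ are the hypertori $H_{\epsilon_1\pm\epsilon_j}$ and, when $2\epsilon_1\in\I$, the two components $\{t_1=\pm 1\}$ of $\{t_1^2=1\}$. For any two distinct such atoms, solving the defining equations shows that $H_1\cap H_2$ is contained in a hypertorus $H_\gamma$ for some $\gamma\in\{\epsilon_j\pm\epsilon_k,\,2\epsilon_j\}$ with $j,k\ge 2$; typical instances include $H_{\epsilon_1-\epsilon_j}\cap H_{\epsilon_1-\epsilon_k}\subseteq H_{\epsilon_j-\epsilon_k}$, $H_{\epsilon_1-\epsilon_j}\cap H_{\epsilon_1+\epsilon_k}\subseteq H_{\epsilon_j+\epsilon_k}$, $H_{\epsilon_1-\epsilon_j}\cap H_{\epsilon_1+\epsilon_j}\subseteq H_{2\epsilon_j}$, and $\{t_1=\pm 1\}\cap H_{\epsilon_1\pm\epsilon_j}\subseteq H_{2\epsilon_j}$. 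The explicit formulas in the excerpt yield $\gamma\le\alpha_i$ for some $i$ (for instance $\epsilon_1-\epsilon_k-(\epsilon_j-\epsilon_k)=\alpha_1+\cdots+\alpha_{j-1}$ and $\epsilon_1+\epsilon_j-2\epsilon_j=\epsilon_1-\epsilon_j$), so $\gamma\in\I$ and, being $\epsilon_1$-free, $\gamma\in\scD$. Each connected component of $H_1\cap H_2$ then lies in a single connected component of $H_\gamma$, which is an atom of $\Ac_{T_\scD}$. The remaining subcase, $\{t_1=1\}\cap\{t_1=-1\}=\emptyset$, is vacuous.

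Finally, for the TM-condition, fix $X\in L(\Ac_{T_\scD})$, a component of $\bigcap_{\beta\in J}H_\beta$ with $J\subseteq\scD\subseteq\Z^{\ell-1}$, and fix $H\in\Ac_{T_\I}\setminus\Ac_{T_\scD}$. The character $\alpha$ defining $H$ is of the form $\epsilon_1\pm\epsilon_j$ or $\epsilon_1$ (the latter for each component of $\{t_1^2=1\}$), with $\epsilon_1$-coefficient $\pm 1$ in every case. Using the decomposition $\Z^\ell=\Z\epsilon_1\oplus\Z^{\ell-1}$ and $\langle J\rangle\subseteq\Z^{\ell-1}$, the image of $\alpha$ in the character lattice $(\Z^\ell/\langle J\rangle)^{\free}=\Z\oplus(\Z^{\ell-1}/\langle J\rangle)^{\free}$ of the underlying subtorus of $X$ has first coordinate $\pm 1$, hence is primitive; by Pontryagin duality, $X\cap H$ is connected. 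The main obstacle is the case-by-case M-condition verification: in type $C$, the presence of the long root $\alpha_\ell=2\epsilon_\ell$ and the need to track the two connected components of each $H_{2\epsilon_j}$ require careful bookkeeping, and the exclusion of $2\epsilon_1$ from $\scD$ when $2\epsilon_1\in\I$ is precisely what guarantees that the candidate root $\gamma$ always lies in $\scD$.
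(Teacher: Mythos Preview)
Your proof is correct and follows essentially the same approach as the paper's: both verify Definition~\ref{def:M-TM-ideal-arr} directly, using that the defining equations of $\Ac_{T_\scD}$ omit $t_1$ for the order-ideal and TM-conditions, and a case-by-case check for the M-condition. One notational remark: your phrase ``$\gamma\le\alpha_i$ for some $i$'' momentarily clashes with the paper's use of $\alpha_i$ for simple roots---you evidently mean $\gamma$ is dominated by the root defining $H_1$ or $H_2$, as your parenthetical examples make clear.
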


  \begin{proof}
  The first assertion is clear via the transformation $x_i \mapsto x_{i-1}$ for $2 \le i \le \ell$. 
  Denote $\Ac :=\Ac_{T_\I }$ and $ \Dd : =  \Ac_{T_\scD}$.
There do not exist $X \in L( \Dd)$ and $Y \in  L(\Ac)  \setminus L( \Dd)$ such that $X \subseteq Y$ since  the defining equations of any $X \in L( \Dd)$ do not involve $t_1$. 
Therefore, $L(\Dd)$ is a proper order ideal of $L(\Ac)$. 
Note also that the power of variable $t_1$ in the defining equation of any $H \in\Ac  \setminus \Dd$ is equal to $1$. 
This shows Condition \ref{def:M-TM-ideal-arr}(*).  

It remains to show that for any two distinct $H_1,H_2 \in \Ac  \setminus \Dd$ and every connected component $C$ of the intersection $H_1\cap H_2$,  there exists $H_3 \in  \Dd$ such that   $C \subseteq H_3$. 
We consider three main cases, the remaining cases are similar to one of these.
\begin{enumerate}[(a)]
	\item Assume $H_1 = \{ t_1t_j =1\}$ (i.e., $\epsilon_1+\epsilon_j \in \I$) and  $H_2= \{ t_1t^{-1}_k =1\} $ for $j>1$, $k>1$, $j \ne k$. 
Then by the definition of an ideal we must have $\epsilon_j+\epsilon_k \in \scD$ (since $\epsilon_1+\epsilon_j  > \epsilon_j+\epsilon_k$). 
Hence $H_3 := \{ t_jt_k =1\} \in \Dd$. 
 Moreover, $H_1\cap H_2$ is connected and $H_1\cap H_2 \subseteq H_3$. 
 	\item Assume $H_1 = \{ t_1t_j =1\}$ and  $H_2= \{ t_1t^{-1}_j =1\} $ for $j>1$. 
Then $H_3 := \{ t_j =1\} \in \Dd$ and $H'_3 := \{ t_j =-1\} \in \Dd$ (since $\epsilon_1+\epsilon_j  > 2\epsilon_j$). 
 Moreover, $H_1\cap H_2$ has two connected components; one is contained in $H_3$, the other is contained in $H'_3$.
  	\item Assume $H_1 = \{ t_1=1\}$ (i.e., $2\epsilon_1 \in \I$) and  $H_2= \{ t_1t_j =1\} $ for $j>1$. 
Then $H_3 := \{ t_j =1\} \in \Dd$  (since $2\epsilon_1 > 2\epsilon_j$). 
 Moreover, $H_1\cap H_2$ is connected and $H_1\cap H_2 \subseteq H_3$. 
\end{enumerate}
 
This concludes that $\Dd$ is a TM-ideal of $\Ac$ as desired.
\end{proof}

   \begin{theorem}
\label{thm:CT-SSS}
Let $\I \subseteq \Phi^+(C_\ell)$ be an ideal. Define 
\begin{align*}
n & :=   \begin{cases}
 \min \{ 1 \le i \le \ell \mid E_i(\I) \ne \emptyset \} & \mbox{ if } \I \ne \emptyset, \\
\ell+1 & \mbox{ otherwise},
 \end{cases} \\
s & :=  \begin{cases}
\min \{ 1 \le i \le \ell \mid 2\epsilon_i \in \I\} & \mbox{ if there exists $2\epsilon_i \in \I$ for some $1 \le i \le \ell $}, \\
\ell+1 & \mbox{ otherwise}.
 \end{cases} 
\end{align*}
Then the toric arrangement  $\Ac_{T_\I }$ with respect to the integer lattice is strictly supersolvable with exponents
$$\exp(\Ac_{T_\I }) = \{0^{n-1} \} \cup \{ b_i\}_{i=n}^{s-1} \cup \{ 2(\ell-i+1)\}_{i=s}^{\ell}.$$ 
(See Definition \ref{def:SG} for the definition of $b_i$'s.)
\end{theorem}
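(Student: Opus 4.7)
The plan is to build an explicit TM-chain of subarrangements of $\Ac_{T_\I}$ by peeling off atoms ``level by level,'' using Lemma \ref{lem:CT-TM} at each step. For $0\le i\le\ell$, set
$$\I^{(i)}:=\{\alpha\in\I\mid \mathrm{supp}(\alpha)\subseteq\{\epsilon_{\ell-i+1},\ldots,\epsilon_\ell\}\},$$
so that $\I^{(0)}=\emptyset$ and $\I^{(\ell)}=\I$. Using the explicit expansions of $\Phi^+(C_\ell)$ in terms of the simple roots $\alpha_1,\ldots,\alpha_\ell$, one checks that any positive root below an element of $\I^{(i)}$ again has support in $\{\epsilon_{\ell-i+1},\ldots,\epsilon_\ell\}$, so $\I^{(i)}$ is an order ideal of $\Phi^+(C_\ell)$; under the identification $\epsilon_{\ell-i+k}\leftrightarrow\epsilon_k$ it is also naturally an order ideal of $\Phi^+(C_i)$.

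The first key step is to show: whenever $\I^{(i-1)}\subsetneq\I^{(i)}$, the arrangement $\Ac_{T_{\I^{(i-1)}}}$ is a TM-ideal of $\Ac_{T_{\I^{(i)}}}$ inside $G^\ell$. Viewing $\I^{(i)}$ as an ideal of $\Phi^+(C_i)$, Lemma \ref{lem:CT-TM} already gives this TM-ideal relation inside $G^i$. Because $\I^{(i)}$ uses none of $\epsilon_1,\ldots,\epsilon_{\ell-i}$, every hypertorus in $\Ac_{T_{\I^{(i)}}}\subseteq G^\ell$ has the form $G^{\ell-i}\times H'$, and the same is true for every connected component of an intersection; consequently the three axioms of Definition \ref{def:M-TM-ideal-arr} transfer verbatim from $G^i$ to $G^\ell$ under the product embedding.

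Next determine the ``trivial'' steps. Writing $k=\ell-i+1$, we have $\I^{(i-1)}=\I^{(i)}$ iff no root of $\I$ has $\epsilon_k$ as its smallest-index support, i.e.\ iff $E_k(\I)=\emptyset$ and $2\epsilon_k\notin\I$. A short argument using the partial order on $\Phi^+(C_\ell)$ shows that for $k\ge n$ at least one of $E_k(\I)\ne\emptyset$ (for $k<\ell$, from $\alpha_k\in\I$) or $2\epsilon_k\in\I$ (for $k\ge s$) always holds, while for $k<n$ neither does. Hence, after removing duplicates, the chain
$$\varnothing=\Ac_{T_{\I^{(0)}}}\subsetneq\Ac_{T_{\I^{(1)}}}\subsetneq\cdots\subsetneq\Ac_{T_{\I^{(r)}}}=\Ac_{T_\I}$$
is a genuine TM-chain of length $r=\ell-n+1=\rk(\Ac_{T_\I})$, and Theorem \ref{thm:SS=SS} certifies strict supersolvability.

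Finally, Theorem \ref{thm:SSS-factor} gives the $i$-th exponent as the number of atoms added at step $i$, namely $|E_k(\I)|+2\cdot[2\epsilon_k\in\I]$ where $k=\ell-i+1$, the factor $2$ accounting for the two connected components $\{t_k=\pm1\}$ of the hypertorus $2\epsilon_k$. A case split on whether $k\ge s$ (so $b_k=2(\ell-k)$ and the exponent becomes $2(\ell-k+1)$) or $n\le k<s$ (exponent $b_k$), together with the $n-1$ zero exponents from the non-essential factor $G^{n-1}$ in the ambient $T\simeq G^\ell$, reproduces exactly the multiset $\{0^{n-1}\}\cup\{b_i\}_{i=n}^{s-1}\cup\{2(\ell-i+1)\}_{i=s}^\ell$ (with the one degenerate case $\I=\{2\epsilon_\ell\}$, where $n>s$, handled by hand). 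The main obstacle is verifying that Lemma \ref{lem:CT-TM} transfers cleanly across the relabeling $\Phi^+(C_i)\hookrightarrow\Phi^+(C_\ell)$ and the product embedding $G^i\hookrightarrow G^\ell$; once that is granted, the rest is combinatorial bookkeeping.
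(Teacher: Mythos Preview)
Your approach is essentially the paper's: both peel off, at step $k$ (from $k=\ell$ down to $k=n$), all atoms whose smallest $\epsilon$-index equals $k$, invoking Lemma~\ref{lem:CT-TM} (after the obvious relabeling) at each step. Your $\Ac_{T_{\I^{(\ell-k+1)}}}$ is exactly the paper's $\Ac_k$, and the transfer from $G^i$ to $G^\ell$ is the product argument the paper leaves implicit.

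One concrete claim in your bookkeeping is false, however. You assert that for every $k$ with $n\le k\le\ell$ one has $E_k(\I)\ne\emptyset$ or $2\epsilon_k\in\I$ (``from $\alpha_k\in\I$''), and hence that the chain is strictly increasing on this range with length $\ell-n+1=\rk(\Ac_{T_\I})$. This fails: take $\ell=4$ and let $\I$ be the ideal generated by $\alpha_1=\epsilon_1-\epsilon_2$ and $2\epsilon_3$. Then $n=1$, $s=3$, but $\alpha_2\notin\I$, $E_2(\I)=\emptyset$, and $2\epsilon_2\notin\I$, so $\I^{(2)}=\I^{(3)}$; the rank of $\Ac_{T_\I}$ is $3$, not $4$. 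The fix is easy: also allow collapses at those indices $k\in[n,s-1]$ with $E_k(\I)=\emptyset$; the surviving chain is still a TM-chain, and each such $b_k=0$ simply joins the zero part of $\exp(\Ac_{T_\I})$. (The paper treats the extreme case $s=\ell+1$ separately via type~$A$, and is equally informal about possible collapses in the range $n\le k<s$.)
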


  \begin{proof}
  Denote $\Ac :=\Ac_{T_\I }$.
  Note that $n \le s$ and $b_i =0$ for $1 \le i <n$.
  If $2\epsilon_i\notin \I$ for all $1 \le i \le \ell $, then $\I$ can be regarded as an ideal of $\Phi^+(A_{\ell-1})$ by Remark \ref{rem:A-const}. 
  Thus, $L( \Ac_{{\I}  }) \simeq L( \Ac_{T_\I } )$. 
  By Corollary \ref{cor:ideal-SS-A}, $\Ac \in \mathbf{SSS}$ with exponents $\mathrm{DP}(\I) = \{b_1, \ldots, b_{\ell}\}$. 
  
Now we may assume $1 \le n \le s \le \ell$. 
  Then $ 2\epsilon_i \in \I$ and $E_i(\I) \ne \emptyset $ for all $s \le i \le \ell$. 
  Define 
 $$
 \Ac_i :=
 \begin{cases}
 \bigcup_{j=i}^\ell \left( \Bc_j \cup \{t^2_j =1 \}\right) & \mbox{ if } s \le i \le \ell, \\
 \bigcup_{j=i}^{s-1} \Bc_j  \cup \Ac_s & \mbox{ if } n \le i < s.
 \end{cases}
 $$
 In particular, $\Ac_s$ can be identified with $\Ac_{T_{\Phi^+}} (C_{\ell-s+1})$ (via $x_i \mapsto x_{i-s+1}$ for $s \le i \le \ell$). 
 Then $b_i =2(\ell-i)$ for $s \le i \le \ell$.
 
 By Theorem \ref{thm:SS=SS}, it suffices to show that the chain
 $$\varnothing \subsetneq \Ac_\ell \subsetneq \cdots \subsetneq  \Ac_n =\Ac$$
 is a TM-chain of $\Ac$. 
A similar argument as in the proof of Lemma \ref{lem:CT-TM} shows that $\Ac_{i+1} $ is a TM-ideal of $\Ac_{i} $ for each $n \le i \le \ell-1$. 

Thus $\Ac \in \mathbf{SSS}$ with the desired exponents. 
\end{proof}

Recall the definitions of the parameters $n\le s$ in Theorem \ref{thm:CT-SSS}.
  \begin{theorem}
\label{thm:CS-IA}
Let $\I \subseteq \Phi^+(C_\ell)$ be an ideal. 
Then the toric arrangement  $\Ac_{\I }$ with respect to the root lattice is inductive with exponents
$$\exp(\Ac_{\I }) = \{0^{n-1} \} \cup \{ b_i\}_{i=n}^{s-1}  \cup \{ 2(\ell-i)\}_{i=s}^{\ell-1} \cup  \{\ell-s+1\}.$$ 
\end{theorem}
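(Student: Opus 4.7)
The plan is to prove inductiveness by a double induction on $\ell$ and $|\I|$, constructing an explicit induction table via the addition theorem \ref{thm:IA-add}. The base case $\I = \varnothing$ is immediate: $\Ac_{\varnothing}$ is trivially inductive with exponents $\{0^\ell\}$, which matches the formula when $n=\ell+1$.

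For the inductive step, first dispense with the subcase $s = \ell+1$. Here $\I$ contains no long root $2\epsilon_i$, and since $\epsilon_i + \epsilon_j > 2\epsilon_j$ in the root ordering of $C_\ell$, $\I$ contains no root of the form $\epsilon_i + \epsilon_j$ either. Thus $\I \subseteq \Phi^+(A_{\ell-1})$ under the embedding described in Remark \ref{rem:A-const}, giving $L(\Ac_\I) \simeq L(\Ac_{T_\I})$. Theorem \ref{thm:CT-SSS} combined with Theorem \ref{thm:SSS<IP-intro} then yields inductiveness with exponents $\{0^{n-1}\} \cup \{b_i\}_{i=n}^{\ell}$, as required.

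For the main subcase $s \leq \ell$, I would follow the TM-chain architecture of $\Ac_{T_\I}$ from Theorem \ref{thm:CT-SSS} but perform the construction atom-by-atom at the root-lattice level. Specifically, order the atoms of $\Ac_\I$ in phases indexed by the leading index $i$, running from $i=\ell$ down to $i=n$: within phase $i$, insert the atoms in $\Bc_i(\I)$ together with the hypertorus $H_{2\epsilon_i}$ whenever $i \geq s$. At each individual insertion of an atom $H$, Theorem \ref{thm:IA-add} reduces the claim to two verifications: (i) the restriction $\Ac^H$ is inductive, and (ii) its exponents form the predicted initial segment of the desired exponent list. The critical identification is that, under the isomorphism $H \simeq (\mathbb{S}^1)^{\ell-1}$, the restriction $\Ac^H$ corresponds to $\Ac_{\I_H}$ for some ideal $\I_H$ of a smaller root system of type $A$ or $C$, whence the inductive hypothesis on $\ell$ (or, at an intermediate step, on $|\I|$) applies.

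The hard part will be tracking these restrictions precisely, especially around the transition between the type-$C$ phase ($i \geq s$) and the type-$A$ phase ($i < s$). The special exponent $\ell - s + 1$ arises from this transition, corresponding to the restriction onto the last long-root hypertorus added at phase $s$. Unlike the integer-lattice case of Theorem \ref{thm:CT-SSS}, we cannot expect a TM-chain here (consistent with Remark \ref{rem:JT-SSS<IP} and the phenomenon in Figure \ref{fig:B2}), so the addition theorem \ref{thm:IA-add}, rather than Lemma \ref{lem:Q<P}, is the essential device at each step. In practice I anticipate needing an analogue of Lemma \ref{lem:CT-TM} asserting that the subarrangement of atoms with leading index strictly greater than $1$ is at least an M-ideal (though generally not a TM-ideal) of the full root-lattice arrangement, which will provide the bookkeeping needed to verify the divisibility of characteristic polynomials at every phase transition.
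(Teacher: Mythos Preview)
Your plan has a genuine gap at precisely the step you identify as ``the hard part.'' You assert that each restriction $\Ac^{H}$ can be identified with $\Ac_{\I_H}$ for some ideal $\I_H$ of a smaller root system, so that the inductive hypothesis on $\ell$ applies. This is false at the decisive step. When $H=H_{\delta}$ with $\delta=2\epsilon_s$ (the highest long root present), the restriction is \emph{not} a root-lattice arrangement at all: setting $t_\ell = t_1^{-2}\cdots t_{\ell-1}^{-2}$ in the $C_{\ell-s+1}$ coordinates identifies $\Ac^{H_\delta}$ with the \emph{integer}-lattice arrangement $\Ac_{T_{\Phi^+}}(C_{\ell-s})$. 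Its exponents are $\{2(\ell-i)\}_{i=s}^{\ell-1}$ by Theorem~\ref{thm:CT-SSS}, which differ from those of any $\Ac_{\J}(C_{\ell-s})$; so no choice of $\I_H$ makes your identification work, and the inductive hypothesis on Theorem~\ref{thm:CS-IA} cannot close this step. You must instead invoke Theorem~\ref{thm:CT-SSS} (together with Theorem~\ref{thm:SSS<IP-intro}) to certify that $\Ac^{H_\delta}$ is inductive.

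There is also a structural overcomplication. You anticipate that the subarrangement with leading index $>s$ is only an M-ideal of the root-lattice arrangement, and therefore plan to bypass Lemma~\ref{lem:Q<P} by checking every restriction atom-by-atom. In fact, once $H_\delta$ is removed, that subarrangement \emph{is} a TM-ideal of $\Ac\setminus\{H_\delta\}$: in root-lattice coordinates every root of $E_s(\Phi^+)$ has coefficient $1$ at $\alpha_s$, so Condition~\ref{def:M-TM-ideal-arr}(*) holds. This is exactly the paper's route: show $\Dd=\Ac_{\Phi^+(C_{\ell-s})}$ is a TM-ideal of $\Ac':=\Ac\setminus\{H_\delta\}$, apply Lemma~\ref{lem:Q<P} to get $\Ac'\in\mathbf{IA}$, then perform a \emph{single} application of Theorem~\ref{thm:IA-add} at $H_\delta$, using the integer-lattice identification above for the restriction. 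The type-$A$ layers $E_i(\I)$ for $n\le i<s$ are then again TM-ideals (same coefficient argument at $\alpha_i$), and Lemma~\ref{lem:Q<P} finishes. Your atom-by-atom scheme is not needed and, as written, does not close.
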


  \begin{proof}   Denote $\Ac :=\Ac_{\I }$.
  
  Case $1$.  First we prove the assertion when $s=1$. 
 In this case,  $\I = \Phi^+$. 
    We show that $\Ac  \in  \mathbf{IA}$ with the desired exponents by induction on $\ell$. 
    The case $\ell=1$ is clear. 
     
Suppose $\ell \ge 2$. 
Let $\delta: =2\epsilon_1=2\sum_{1\le k< \ell }\alpha_k+\alpha_\ell $ denote the highest root of $\Phi^+$. 
Define 
$$\scD:=\Phi^+ \setminus (E_1(\Phi^+)\cup \{\delta\}), \mbox{ and } \Dd : =  \Ac_{{\scD}}.$$
Then $\scD=\Phi^+(C_{\ell-1})$ (via $x_i \mapsto x_{i-1}$) . 
By the induction hypothesis, $\Dd  \in  \mathbf{IA}$ with  exponents
$$\exp(\Dd) = \{ 2(\ell-i)\}_{i=2}^{\ell-1} \cup  \{\ell-1\}.$$ 

Denote $\Ac' := \Ac  \setminus \{H_\delta\}$. 
Note that $\Ac' \setminus \Dd$ consists of the hypertori defined by the roots in $E_1(\Phi^+)$. 
These roots are given by
 \begin{align*}
\epsilon_1-\epsilon_j &=\sum_{1\le k<j }\alpha_k \quad (1  < j \le \ell),  \\
 \epsilon_1+\epsilon_j &=\sum_{1\le k<j }\alpha_k+2 \sum_{j\le k <\ell }\alpha_k+\alpha_\ell \quad(1   < j \le \ell) .
\end{align*}

Using a similar argument as in the proof of Lemma \ref{lem:CT-TM}, we may show that $\Dd$ is an M-ideal of $\Ac'$. 
Moreover, it is indeed a TM-ideal since Condition  \ref{def:M-TM-ideal-arr}(*) is satisfied because the coefficient at the simple $\alpha_1$ of all roots in $E_1(\Phi^+)$ is $1$, while that of the roots in $\scD$ is $0$.
Apply Lemma \ref{lem:Q<P} for $L(\Dd)$ and  $L(\Ac')$ we have that $\Ac' \in  \mathbf{IA}$ with exponents 
$$\exp(\Ac') = \exp(\Dd) \cup  \{2(\ell-1)\}  =  \{ 2(\ell-i)\}_{i=1}^{\ell-1} \cup  \{\ell-1\}.$$

Furthermore, one may check that the restriction $\Ac^{H_\delta}$ can be identified with $\Ac_{T_{\Phi^+}} (C_{\ell-1})$. 
(To see this just set $t_\ell = t_1^{-2}\cdots t_{\ell-1}^{-2}$ in the equations involving $t_\ell$. 
For example, the equation $t_2^2\cdots t_{\ell-1}^2t_\ell=1$ becomes $t_1^2=1$.) 
Thus by Theorem \ref{thm:CT-SSS}, $\Ac^{H_\delta} \in  \mathbf{IA}$ with exponents 
$$\exp(\Ac^{H_\delta}) =   \{ 2(\ell-i)\}_{i=1}^{\ell-1}.$$
   
Apply Theorem \ref{thm:IA-add}, we know that  $\Ac  \in  \mathbf{IA}$ with the desired exponents
$$\exp(\Ac) = \{ 2(\ell-i)\}_{i=1}^{\ell-1} \cup  \{\ell\}.$$

  Case $2$.  Now we prove the assertion when $s>1$. 
The set
$$\J : = \I \setminus \bigcup_{i=n}^{s-1} E_i(\I)$$ 
can be identified with $\Phi^+(C_{\ell-s+1})$. 
By   Case $1$ above, $\Pp: =\Ac_{{ \J}} \in  \mathbf{IA}$ with exponents 
$$
\exp(\Pp) = \{ 2(\ell-i)\}_{i=s}^{\ell-1} \cup  \{\ell-s+1\}.
$$

Using a similar argument as in Case $1$, we may show that the sets $E_i(\I)$ for $n \le i \le s-1$ give rise to a chain of TM-ideals for $\Ac$ starting from $\Pp$. 
Applying Lemma \ref{lem:Q<P} repeatedly, we may conclude that $\Ac  \in  \mathbf{IA}$ with the desired exponents. 
 \end{proof}

\begin{example}\label{eg:C-ideal}   
 Table \ref{tab:C5-ideal} shows an ideal $\I\subsetneq  \Phi^+(C_5)$ (in enclosed region) with $n=1$, $s=3$. 
By Theorem \ref{thm:CT-SSS}, $\Ac_{T_\I }\in \mathbf{SSS}$ with exponents $\{4,6,6,4,2\}$. 
By Theorem \ref{thm:CS-IA}, $\Ac_{\I }\in \mathbf{IA}$ with exponents $\{4,6,4,2,3\}$.
\end{example}

\begin{table}[htbp]
\centering
{\footnotesize\renewcommand\arraystretch{1.5} 
\begin{tabular}{ccccccc}
\mbox{Height} & & & && \\
9 &  $2\epsilon_1$ &   &  & && \\
8 & $\epsilon_1+\epsilon_2$ &   & & &&\\
7 &$\epsilon_1+\epsilon_3$ & $2\epsilon_2$  & && \\
\cline{3-3}
6 & $\epsilon_1+\epsilon_4$ & \multicolumn{1}{|c|}{$\epsilon_2+\epsilon_3$}  & && \\
 \cline{4-4}
5 &$\epsilon_1+\epsilon_5$ & \multicolumn{1}{|c}{$\epsilon_2+\epsilon_4$} &\multicolumn{1}{c|}{\cellcolor{red!25}{$2\epsilon_3$} } &&& \\
\cline{2-2}
4& \multicolumn{1}{|c}{$\epsilon_1-\epsilon_5$} & $\epsilon_2+\epsilon_5$& \multicolumn{1}{c|}{$\epsilon_3 +\epsilon_4$} &&&\\
\cline{5-5}
3 & \multicolumn{1}{|c}{$\epsilon_1-\epsilon_4$} & $\epsilon_2-\epsilon_5$ &$\epsilon_3 +\epsilon_5$ &  \multicolumn{1}{c|}{\cellcolor{red!25}{$2\epsilon_4$}} &&\\
2& \multicolumn{1}{|c}{$\epsilon_1-\epsilon_3$} & $\epsilon_2-\epsilon_4$ & $\epsilon_3-\epsilon_5$ &\multicolumn{1}{c|}{$\epsilon_4+\epsilon_5$}&&\\
\cline{6-6}
1 & \multicolumn{1}{|c}{$\epsilon_1-\epsilon_2$} & $\epsilon_2 -\epsilon_3$& $\epsilon_3-\epsilon_4$  &  $\epsilon_4-\epsilon_5$ & \multicolumn{1}{c|}{\cellcolor{red!25}{$2\epsilon_5$}}  &\\
\cline{2-6}
& $E_1$ & $E_2$ & $E_3$ & $E_4$  & $E_5 = \emptyset$ &  
\end{tabular}
}
\bigskip
\caption{An ideal $\I$ in $\Phi^+(C_5)$.}
\label{tab:C5-ideal}
\end{table}

%********************************************************************************************************
\subsection{Type \texorpdfstring{$B$}{B}.}

The restriction of an ideal toric arrangement of type $B$ is in general not an ideal toric arrangement. 
We need an extension of the ideals so that the corresponding arrangements contain sufficient deletions and restrictions in order to apply the addition theorem \ref{thm:IA-add} to guarantee the inductiveness.

   \begin{lemma}
\label{lem:BT-IA}
Let $\I \subseteq \Phi^+(B_\ell)$ be an ideal such that $E^+_1(\I) \ne \emptyset $. 
Let $m=m(\I) $ be the integer so that  $\epsilon_1+\epsilon_m$ is the highest root in  $E^+_1(\I)$.
 (In particular, $2 \le m \le \ell$ and $2\ell-m =b_1$.)
 Let $1 \le p \le \ell+1$. Define the \emph{extension $ \I (p)$ of  $\I$ with parameter $p$} as follows:
  $$ \I (p) := \left( \I \setminus  \{ \epsilon_i  \mid p \le i \le \ell \} \right) \cup \{ 2\epsilon_i  \mid p \le i \le \ell \} .$$
If $m<p$, then  $\Ac_{T_{\I (p)}}$ is inductive with exponents 
$$\exp(\Ac_{T_{\I (p)}}) =\{2\ell-p+1\} \cup \{ b_i\}_{i=1}^{\ell-1}.$$ 
\end{lemma}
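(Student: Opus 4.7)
The plan is to prove the lemma by induction on the rank $\ell$, with an auxiliary downward recursion on $p$ (from $p=\ell+1$ to $p=m+1$), applying the addition theorem for inductive abelian arrangements (Theorem \ref{thm:IA-add}) at each step to a triple $(\Ac,\Ac\setminus\{H\},\Ac^H)$ with a carefully chosen atom $H$, in the spirit of the type $C$ proof of Theorem \ref{thm:CS-IA}. The extended family $\I(p)$ is introduced precisely so that restriction may be brought back inside the family: restriction of a type $B$ ideal toric arrangement generically produces shifted hypertori $\{t_i=-1\}$, and the ``double root'' coordinates of $\I(p)$ are designed to absorb them.

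For the inductive step on $p$, suppose $p\le\ell$ and delete $H:=\{t_p=-1\}$, one of the two connected components of the double root $2\epsilon_p$. Then $\Ac\setminus\{H\}=\Ac_{T_{\I(p+1)}}$, which by the inner hypothesis is inductive with exponents $\{2\ell-p\}\cup\{b_i\}_{i=1}^{\ell-1}$. For the restriction, substitute $t_p=-1$: each atom $\{t_it_p^{\pm1}=1\}$ collapses to the shifted hypertorus $\{t_i=-1\}$, which pairs with the preexisting $\{t_i=1\}$ to produce a ``double root'' $\{t_i=\pm1\}$. After shifting past the $p$-th coordinate, one aims to identify $\Ac^H$ with $\Ac_{T_{\J(q)}}$ for a suitable ideal $\J\subseteq\Phi^+(B_{\ell-1})$ and parameter $q$ satisfying $m(\J)<q$, invoke the outer induction on $\ell$ to obtain $\Ac^H\in\mathbf{IA}$ with compatible exponents, and conclude via Theorem \ref{thm:IA-add} that $\Ac\in\mathbf{IA}$ with exponents $\{2\ell-p+1\}\cup\{b_i\}_{i=1}^{\ell-1}$. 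The inner base case $p=\ell+1$ (so $\I(p)=\I$) is handled separately by deleting instead $H=H_{\epsilon_1+\epsilon_m}$, the atom for the maximal root of $E_1^+(\I)$; since $\epsilon_1+\epsilon_m$ is maximal in $\I$, $\I\setminus\{\epsilon_1+\epsilon_m\}$ remains an ideal, and the restriction is analyzed analogously, the new feature being that the atom $\{t_1t_m^{-1}=1\}$ collapses to $\{t_m^{-2}=1\}$, creating the shifted atom $\{t_m=-1\}$ that pairs with $\{t_m=1\}$ to form a new double root.

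The main obstacle will be the second step, namely the precise recognition of $\Ac^H$ as an arrangement $\Ac_{T_{\J(q)}}$ of the same extended-ideal form with $m(\J)<q$. Since $\I$ contains $\epsilon_1+\epsilon_m$ and is downward-closed, one finds $\epsilon_i-\epsilon_p\in\I$ for \emph{every} $i<p$ (as $\epsilon_i-\epsilon_p\le\epsilon_1+\epsilon_m$), so naively every coordinate becomes ``doubled'' on $H$, which would force $q=1$ and contradict the required $m(\J)\ge 2>q$. Reconciling this will require a more delicate identification of the restriction -- either a different choice of deleted atom (e.g.\ a long-root atom $H_{\epsilon_1+\epsilon_j}$ rather than a component of a double root), a careful combinatorial analysis separating into cases based on which long roots $\epsilon_i+\epsilon_j$ lie in $\I$, or a permutation of coordinates realigning the doubled indices into a tail $\{q,q+1,\dots,\ell-1\}$ as demanded by the definition of $\J(q)$. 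The boundary situations ($m=\ell$, $p=m+1$, and small $\ell$) are likely to need separate verification, and closing the joint induction on $(\ell,p)$ will depend on this bookkeeping being uniform.
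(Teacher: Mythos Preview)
Your diagnosis of the obstacle is accurate, and it is a real gap: restricting to $H=\{t_p=-1\}$ doubles \emph{every} surviving coordinate, since $\epsilon_i\pm\epsilon_p\le\epsilon_1+\epsilon_m\in\I$ for all $i<p$, while indices $j>p$ are already doubled in $\I(p)$. The restriction is therefore an extension with parameter $q=1$, outside the hypothesis $m(\J)<q$; no reindexing helps, because $m(\J)\ge 2$ is forced whenever $E_1^+(\J)\ne\emptyset$. So the double induction on $(\ell,p)$ as you set it up does not close.

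The paper's proof has a different architecture. It inducts only on $\ell$, and first strips off the entire index-$1$ column via a TM-ideal argument (Lemma~\ref{lem:Q<P}) rather than addition--deletion: with $\J:=\I\setminus(E_1(\I)\cup\{\epsilon_1\})\subseteq\Phi^+(B_{\ell-1})$, the arrangement $\Pp:=\Ac_{T_{\J(p-1)}}$ is a TM-ideal of the subarrangement $\Dd$ obtained from $\Ac_{T_{\I(p)}}$ by removing only the hypertori $\{t_1t_i=1\}$ for $m\le i\le p-1$; induction on $\ell$ gives $\Pp\in\mathbf{IA}$, hence $\Dd\in\mathbf{IA}$. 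Those $p-m$ hypertori are then added back one at a time via Theorem~\ref{thm:IA-add}; the restriction at each step is of the form $\Pp\cup\{t_k=-1\}$, and restricting \emph{that} to $\{t_k=-1\}$ lands in the parameter-$1$ case $\Ac_{T_{\scR(1)}}$. Here is the idea you were missing: this case is \emph{not} fed back into the lemma's own hypothesis but is handled by the external input of Theorem~\ref{thm:CT-SSS}, since an extension with parameter $1$ replaces every $\epsilon_i$ by $2\epsilon_i$ and is therefore a type-$C$ integer-lattice arrangement, strictly supersolvable with the required exponents. Your instinct to delete long-root atoms is close to what the paper does in its second phase, but the TM-ideal shortcut in the first phase---collapsing most of the first column in one step---is what makes the bookkeeping tractable and avoids the trap you identified.
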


  \begin{proof}
  Denote $\Ac :=\Ac_{T_{\I (p)}}$. 
  We may write 
   $$\Ac = \Ac_{T_\I } \cup  \{ t_i =-1 \mid p \le i \le \ell \}.$$
   
   We show that $\Ac  \in  \mathbf{IA}$ with the desired exponents by induction on $\ell$. 
   If $\ell \le 2$, then $\Ac$ is always strictly supersolvable except when $p=3$ and $\I = \I(3) = \Phi^+(B_2)$. 
   In which case, $\Ac$ is indeed inductive with exponents $\{2,2\}$ by Figure \ref{fig:B2}.
   
   Now suppose $\ell\ge3$.
Since $\epsilon_1+\epsilon_m \in \I$, we must have $\epsilon_2+\epsilon_m \in \I$.
Define 
$$ \J : = \I \setminus (E_1(\I) \cup \{ \epsilon_1\}).$$ 
Then $\J$ can be regarded as an ideal of $\Phi^+(B_{\ell-1})$ (via $x_i \mapsto x_{i-1}$) with $m(\J) \le m(\I)-1$. 
Also, $E_i^\pm(\J) = E_{i+1}^\pm(\I) $ hence $b_i(\J) = b_{i+1}(\I) $ for all $1 \le i \le \ell-1$.

Moreover, $\I(p) \setminus (E_1(\I) \cup \{ \epsilon_1\})$ can be identified with the extension  $\J(p-1)$ since $2 \le m<p$. 
By the induction hypothesis, $\Pp: =\Ac_{T_{ \J(p-1)}} \in  \mathbf{IA}$ with exponents 
\begin{equation}
\label{eq:expP}
\exp(\Pp) = \{2\ell-p\} \cup \{ b_i(\I)\}_{i=2}^{\ell-1}.
\end{equation}

Define 
$$\scD:=\I(p) \setminus \{\epsilon_1+\epsilon_i   \mid m \le i \le p-1 \}, \mbox{ and } \Dd : =  \Ac_{T_{\scD}}.$$
Since $2\epsilon_i \in \scD$ for all $p \le i \le \ell$, using a similar argument as in the proof of Lemma \ref{lem:CT-TM} we may show that $\Pp$ is a TM-ideal of $\Dd$.
Apply Lemma \ref{lem:Q<P} for $L(\Dd)$ and  $L(\Pp)$ we have that $\Dd \in  \mathbf{IA}$ with exponents 
$$\exp(\Dd) = \exp(\Pp) \cup  \{2\ell-p+1\}  = \{2\ell-p+1,2\ell-p\} \cup \{  b_i(\I) \}_{i=2}^{\ell-1}.$$

Now we show that adding the $p-m$ hypertori $t_1t_{p-1}=1,t_1t_{p} =1 , \ldots, t_1t_m =1$ to $ \Dd$ in any order and applying Theorem \ref{thm:IA-add} to each addition step, we are able to conclude that  $\Ac  \in  \mathbf{IA}$ with the desired exponents. 
Since $2\ell-m =b_1$, it suffices to show that  the restriction at each addition step is inductive with exponents $ \{2\ell-p+1\} \cup \{  b_i(\I) \}_{i=2}^{\ell-1}.$

Indeed, the restriction at each step has the form $\Pp \cup \{H_k\}$ where $H_k$ denotes the hypertorus $t_k=-1$ for some $ m \le k \le p-1 $. 
Fix $ m \le k \le p-1 $. 
Note that $\epsilon_i+\epsilon_k \in \I \subseteq \J(p-1)$ for all $1 < i \ne k$ since $\epsilon_1+\epsilon_k \in \I$. 
Thus, the restriction $(\Pp \cup \{H_k\})^{H_k}$ can be identified with the arrangement $\Ac_{T_{ \scR(1)}}$, where $\scR(1)$ is the extension with parameter $p=1$ of an ideal $\scR$ of $\Phi^+(B_{\ell-2})$ (via $x_i \mapsto x_{i-1}$ ($2 \le i <k $) and $x_i \mapsto x_{i-2}$ ($k< i \le \ell$)) with
 $
b_i^\pm(\scR) = b_{i+1}^\pm(\I) -1
 $
 for $1 \le i \le \ell-2$. 
(Note that the equations  $
b_i^\pm(\scR) = b_{i+1}^\pm(\I) -1
 $
 for $k-1 \le i \le \ell-2$ follow from the fact that $\bigcup_{i=k-1}^{\ell-2} (E_i(\scR) \cup \{ 2\epsilon_i \})$ is a root system of type $C$.)
 
 Now using a similar argument as in the proof of Theorem \ref{thm:CT-SSS}, we know that $(\Pp \cup \{H_k\})^{H_k}$ is strictly supersolvable hence inductive with  exponents 
$$\exp((\Pp \cup \{H_k\})^{H_k})= \{  b_i(\scR)+2 \}_{i=1}^{\ell-2} =  \{  b_i(\I) \}_{i=2}^{\ell-1}.$$

By Theorem \ref{thm:IA-add} and Equation \eqref{eq:expP} above, we know that $\Pp \cup \{H_k\} \in \mathbf{IA}$ for every $ m \le k \le p-1 $ with the desired exponents 
$$\exp(\Pp \cup \{H_k\})=  \{2\ell-p+1\} \cup \{  b_i(\I) \}_{i=2}^{\ell-1}.$$

This completes the proof.
\end{proof}

   \begin{theorem}
\label{thm:BT-IA}
Let $\I \subseteq \Phi^+(B_\ell)$ be an ideal such that $\epsilon_k \in \I$ for some $1 \le k \le \ell$.
Define 
\begin{align*}
n & :=  \min \{ 1 \le i \le \ell \mid E_i(\I) \ne \emptyset \}, \\
a & :=  \min \{ n \le i \le \ell \mid \epsilon_i \in \I \mbox{ and } E_i^+(\I) = \emptyset \},\\
s & :=  \min \{ a \le i \le \ell \mid E_i^+(\I) \ne \emptyset \}.
\end{align*}
For each $s \le i \le \ell$, let $m(i) $ be the integer so that  $\epsilon_i+\epsilon_{m(i)}$ is the highest root in  $E_i(\I)$. 
 (In particular, $m(j) \le m(i)$ if $i<j$.)
Let $s \le p \le \ell+1$, recall the definition of the extension $ \I (p)$ of  $\I$ with parameter $p$ in Lemma \ref{lem:BT-IA}. Define 
 \begin{align*}
t  :=  \min \{ s \le i \le \ell \mid m(i) <p \}.
\end{align*}
Then  $\Ac_{T_{\I (p)}}$ is inductive with exponents 
$$\exp(\Ac_{T_{\I (p)}}) = \{0^{n-1}, 2\ell-p-t+2\} \cup \{ b_i+1 \mid i \in [a, t-1]\} \cup \{ b_i \mid i \in [n, \ell-1] \setminus [a, t-1]\ \}.$$
\end{theorem}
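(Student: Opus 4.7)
The plan is to prove the theorem by induction on $\ell$ (with the parameter $p$ free), using Lemma \ref{lem:BT-IA} and Theorem \ref{thm:CT-SSS} as building blocks, in a manner that parallels the proof of Theorem \ref{thm:CS-IA}. The overall strategy is to construct $\Ac_{T_{\I(p)}}$ by successive TM-ideal extensions (handled via Lemma \ref{lem:Q<P}) and inductive additions (handled via Theorem \ref{thm:IA-add}), stratifying the contributions to the exponents according to the row index $i \in [1, \ell]$.

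First I would reduce to the case $n = 1$. If $n > 1$, then $E_i(\I) = \emptyset$ for all $i < n$, and the ideal property together with the hypothesis $\epsilon_k \in \I$ forces $\epsilon_i \notin \I$ for $i < n$: otherwise $\epsilon_i - \epsilon_{i+1} \le \epsilon_i$ would lie in $E_i(\I)$. Consequently no defining equation of $\Ac_{T_{\I(p)}}$ involves $t_1, \ldots, t_{n-1}$, and the arrangement lives in a rank-$(\ell - n + 1)$ subtorus. The $n-1$ trivial directions account exactly for the $n - 1$ zero exponents. Relabeling $\epsilon_i \mapsto \epsilon_{i - n + 1}$ identifies $\I$ with an ideal in $\Phi^+(B_{\ell - n + 1})$ having $n' = 1$ and parameters $a' = a - n + 1$, $s' = s - n + 1$, $t' = t - n + 1$, and $p' = p - n + 1$; the claimed exponent formula is preserved under this reparametrization.

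Assuming $n = 1$, I would stratify the construction by row index into a top stratum $t \le i \le \ell$, a middle stratum $a \le i < t$, and a bottom stratum $1 \le i < a$. For the top stratum, apply Lemma \ref{lem:BT-IA} to the sub-ideal $\J$ of $\Phi^+(B_{\ell - t + 1})$ consisting of the roots $\epsilon_i \pm \epsilon_j$ and $\epsilon_i$ (or $2\epsilon_i$) with $i \ge t$ (via $\epsilon_i \mapsto \epsilon_{i - t + 1}$) with parameter $p - t + 1$; the hypothesis $m(\J) < p - t + 1$ translates precisely to $m(t) < p$, which holds by the definition of $t$. This invocation contributes the special exponent $2\ell - p - t + 2$ together with exponents $b_i$ for $t \le i \le \ell - 1$. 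For the middle stratum each row contributes exponent $b_i + 1$ via a TM-ideal extension in the sense of Definition \ref{def:M-TM-ideal-arr} and Lemma \ref{lem:Q<P}, the extra $+1$ coming from the short root $\epsilon_i$ or $2\epsilon_i$ present in $\I(p)$. For the bottom stratum each row contributes $b_i$, either via a pure type-$A$ TM-ideal argument (when $\epsilon_i \notin \I$) or, when $\epsilon_i \in \I$ with $E_i^+(\I) \ne \emptyset$, via a direct application of Theorem \ref{thm:IA-add} after computing the restriction and recognizing it as a smaller ideal toric arrangement already covered by the induction hypothesis or by Theorem \ref{thm:CT-SSS}.

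At each addition step one verifies either the TM-ideal condition of Definition \ref{def:M-TM-ideal-arr} (by the same argument as in the proof of Lemma \ref{lem:CT-TM}, checking that all relevant pairwise intersections are covered by already-added hypertori) or the divisibility hypothesis of Theorem \ref{thm:IA-add} (by explicit computation of deletion and restriction). The main obstacle is the bookkeeping at the boundary row $t$, where the transition between the top and middle strata requires showing that the restriction of $\Ac_{T_{\I(p)}}$ at each of the top-stratum hypertori decomposes as an extension $\scR(1)$ of an ideal $\scR \subseteq \Phi^+(B_{\ell - 2})$ compatible with the inductive parameters, echoing the delicate restriction computation carried out in the proof of Lemma \ref{lem:BT-IA}; this is further complicated by the fact that $t$ need not bear a fixed relationship to $p$, so two sub-cases ($t < p$ and $t \ge p$) must be distinguished. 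Once the boundary analysis is in place, assembling the contributions from the three strata yields the claimed exponents $\{0^{n-1}, 2\ell - p - t + 2\} \cup \{b_i + 1 : i \in [a, t-1]\} \cup \{b_i : i \in [n, \ell - 1] \setminus [a, t-1]\}$.
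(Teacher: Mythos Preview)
Your overall architecture matches the paper's: apply Lemma~\ref{lem:BT-IA} to the ``core'' living on indices $i\ge t$, obtaining the special exponent $2\ell-p-t+2$ together with $\{b_i\}_{i=t}^{\ell-1}$, and then build $\Ac_{T_{\I(p)}}$ from this core by successive TM-ideal extensions, one row at a time, via Lemma~\ref{lem:Q<P}. That is exactly what the paper does, and it is enough.

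Where you diverge is in the amount of machinery you bring in. The paper's proof uses \emph{only} TM-ideal extensions after the single invocation of Lemma~\ref{lem:BT-IA}; it never calls Theorem~\ref{thm:IA-add}, never computes a restriction, and does not proceed by induction on $\ell$. Concretely: for $a\le i\le t-1$ the set $E_i(\I)\cup\{\epsilon_i\}$ is added as a TM-ideal (contributing $b_i+1$), and for $n\le i\le a-1$ the set $E_i(\I)$ alone is added as a TM-ideal (contributing $b_i$). Your case split in the bottom stratum is unnecessary: the ideal structure forces $\epsilon_i\notin\I$ for $n\le i<a$ (if $\epsilon_i\in\I$ and $E_i^+(\I)\ne\emptyset$ for some $i<a$, then $\epsilon_i+\epsilon_\ell\in\I$ forces $\epsilon_a+\epsilon_\ell\in\I$, contradicting $E_a^+(\I)=\emptyset$), so the ``or'' branch you worry about is vacuous and only the TM-ideal argument is needed.

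Likewise, your anticipated ``main obstacle'' at the boundary row $t$ does not arise. Lemma~\ref{lem:BT-IA} is invoked as a black box on $\J(p-t+1)$; its hypothesis $m(\J)<p-t+1$ is exactly $m(t)<p$, which holds by definition of $t$, and the remark in the paper that $m(i)\ge p$ for $s\le i\le t-1$ is precisely what makes each intermediate extension a TM-ideal (it guarantees the needed $2\epsilon_j$ or $\epsilon_j+\epsilon_k$ are already present to catch intersections). There is no restriction to compute and no dichotomy on $t$ versus $p$ to consider. Dropping the extra induction, the addition-theorem branch, and the boundary analysis will give you the paper's short proof.
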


  \begin{proof}
    Denote $\Ac :=\Ac_{T_{\I (p)}}$. 
The set
$$(\I(p) \setminus \bigcup_{i=n}^{a-1} E_i(\I)) \setminus \bigcup_{i=a}^{t-1}  (E_i(\I)) \cup \{ \epsilon_i\})$$ 
can be identified with the extension $\J(p-t+1)$, where $\J$ is an ideal of $\Phi^+(B_{\ell-t+1})$  with $m(i) < p-t+1$ for all $1 \le i \le \ell-t+1$. 
By Lemma \ref{lem:BT-IA}, $\Pp: =\Ac_{T_{ \J(p-t+1)}} \in  \mathbf{IA}$ with exponents 
$$
\exp(\Pp) = \{2\ell-p-t+2\} \cup \{ b_i(\I)\}_{i=t}^{\ell-1}.
$$

Using a similar argument as in the proof of Lemma \ref{lem:CT-TM}, we may show that the sets $E_i(\I)$ for $n \le i \le a-1$ and $E_i(\I) \cup\{\epsilon_i\}$ for $a \le i \le t-1$ give rise a chain of TM-ideals for $\Ac$ starting from $\Pp$. (Note that by definition $m(i) \ge p$ for all $s \le i \le t-1$.) 
Applying Lemma \ref{lem:Q<P} repeatedly we may conclude that $\Ac  \in  \mathbf{IA}$ with the desired exponents. 
Indeed, the sets above contribute to $\exp(\Ac)$ the exponents $b_i$ for $n \le i \le a-1$ and $b_i+1$ for $a \le i \le t-1$. 
\end{proof}

\begin{example}\label{eg:B-ideal}   
 Table \ref{tab:B5-ideal} shows the extension $\I(4)$ of an ideal $\I\subsetneq  \Phi^+(B_5)$ with parameter $p=4$. 
 In this case, $n=a=s=1$ and $t = 2$ with $m(t)=3<p$. 
 By Theorem \ref{thm:BT-IA}, 
 $\Ac_{T_{\I(4)} }\in \mathbf{IA}$ with exponents $\{6,7,6,4,2\}$.
\end{example}

\begin{table}[htbp]
\centering
{\footnotesize\renewcommand\arraystretch{1.5} 
\begin{tabular}{ccccccc}
\mbox{Height} & & & && \\
9 &   $\epsilon_1+\epsilon_2$ &   &  & && \\
8 &  $\epsilon_1+\epsilon_3$ &   & & &&\\
\hhline{~--}
7 & \multicolumn{1}{|c}{\cellcolor{blue!25}{$\epsilon_1+\epsilon_4$}} & \multicolumn{1}{c|}{\cellcolor{blue!25}{$\epsilon_2+\epsilon_3$}}  & && \\
6 & \multicolumn{1}{|c}{\cellcolor{blue!25}{$\epsilon_1+\epsilon_5$}} & \multicolumn{1}{c|}{\cellcolor{blue!25}{$\epsilon_2+\epsilon_4$}}  & && \\
\cline{4-4}
5 & \multicolumn{1}{|c}{\cellcolor{red!25}{$\epsilon_1$}} & \cellcolor{blue!25}{$\epsilon_2+\epsilon_5$} &\multicolumn{1}{c|}{\cellcolor{blue!25}{$\epsilon_3 +\epsilon_4$}} &&& \\
4& \multicolumn{1}{|c}{\cellcolor{yellow!25}$\epsilon_1-\epsilon_5$} & \cellcolor{red!25}{$\epsilon_2$} & \multicolumn{1}{c|}{\cellcolor{blue!25}{$\epsilon_3 +\epsilon_5$}} &&&\\
\cline{5-5}
3 & \multicolumn{1}{|c}{\cellcolor{yellow!25}$\epsilon_1-\epsilon_4$} & \cellcolor{yellow!25}$\epsilon_2-\epsilon_5$ & \cellcolor{red!25}{$\epsilon_3$} &  \multicolumn{1}{c|}{\cellcolor{blue!25}{$\epsilon_4+\epsilon_5$}} &&\\
2& \multicolumn{1}{|c}{\cellcolor{yellow!25}$\epsilon_1-\epsilon_3$} & \cellcolor{yellow!25}$\epsilon_2-\epsilon_4$ & \cellcolor{yellow!25}$\epsilon_3-\epsilon_5$ & \multicolumn{1}{c|}{\cellcolor{red!25}{$2\epsilon_4$}} &&\\
\cline{6-6}
1 & \multicolumn{1}{|c}{\cellcolor{yellow!25}$\epsilon_1-\epsilon_2$} & \cellcolor{yellow!25}$\epsilon_2 -\epsilon_3$& \cellcolor{yellow!25}$\epsilon_3-\epsilon_4$  & \cellcolor{yellow!25}$\epsilon_4-\epsilon_5$ & \multicolumn{1}{c|}{\cellcolor{red!25}{$2\epsilon_5$}} &\\
\cline{2-6}
\end{tabular}
}
\bigskip
\caption{Extension of an ideal $\I$ in $\Phi^+(B_5)$ with parameter $p=4$.}
\label{tab:B5-ideal}
\end{table}

 Recall from Remark \ref{rem:A-const} that $\Ac_{{\Psi}  }$ and  $\Ac_{T_\Psi }$ have isomorphic poset of layers  for every $\Psi\subseteq\Phi^+(B_\ell)$.
 
   \begin{corollary}
\label{cor:BST-IA}
If $\I \subseteq \Phi^+(B_\ell)$, then the toric arrangement $\Ac_{{\I}  }$  with respect to the root  lattice is inductive. 
\end{corollary}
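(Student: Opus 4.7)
The plan is to reduce Corollary~\ref{cor:BST-IA} to Theorem~\ref{thm:BT-IA} (which handles the integer-lattice arrangements attached to extensions of ideals) together with Corollary~\ref{cor:ideal-SS-A} (which handles the type-$A$ case).

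First, I would pass from the root-lattice arrangement $\Ac_{\I}$ to its integer-lattice counterpart $\Ac_{T_\I}$. By Remark~\ref{rem:A-const} the change-of-basis matrix $P(B_\ell)$ is unimodular, so $L(\Ac_{\I}) \simeq L(\Ac_{T_\I})$ as posets. Because inductiveness of an abelian arrangement depends only on its poset of layers (Corollary~\ref{cor:IA-DA-combinatorial}, i.e.\ Theorem~\ref{thm:IP=IA-DP=DA}), it suffices to prove $\Ac_{T_\I} \in \mathbf{IA}$.

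Second, I would dichotomize on whether $\I$ contains a short root. If some $\epsilon_k$ lies in $\I$, I would apply Theorem~\ref{thm:BT-IA} with the trivial extension parameter $p = \ell + 1$: then $\I(p) = \I$ by definition, the hypothesis $m(i) < p$ holds automatically, and the theorem delivers $\Ac_{T_\I} \in \mathbf{IA}$ together with an explicit list of exponents. If instead no $\epsilon_k$ belongs to $\I$, the height computation $\epsilon_i + \epsilon_j - \epsilon_j = \epsilon_i = \sum_{k \ge i}\alpha_k \in \Z_{\ge 0}\Delta$ shows that $\epsilon_j \le \epsilon_i+\epsilon_j$ in the root order, so the ideal property forces $\I$ to contain no root of the form $\epsilon_i + \epsilon_j$ either. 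Therefore $\I \subseteq \Phi^+(A_{\ell-1})$ and may be regarded as an ideal in type $A_{\ell-1}$, and Corollary~\ref{cor:ideal-SS-A} yields that $\Ac_{\I}$ is strictly supersolvable, in particular inductive. The few remaining degenerate configurations (such as $\I = \emptyset$ or $\I = \{\epsilon_\ell\}$, in which some of the indices $n$, $a$, $s$ in Theorem~\ref{thm:BT-IA} would fail to be defined) are immediate from the definition of $\mathbf{IA}$ since the corresponding arrangement is empty or consists of a single hypertorus.

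The main obstacle is not located in this corollary itself, but is already absorbed into Theorem~\ref{thm:BT-IA}: the genuine difficulty is engineering the auxiliary extensions $\I(p)$ so that at every addition step of the induction table the restriction along a new hypertorus stays within the family of arrangements of the form $\Ac_{T_{\J(q)}}$ for some ideal $\J$ in a $B$-root system of smaller rank, thereby keeping the double induction in Lemma~\ref{lem:BT-IA} self-contained. Once that engineering is done, the deduction of Corollary~\ref{cor:BST-IA} is a purely lattice-theoretic repackaging via the unimodularity of $P(B_\ell)$ and the short-root dichotomy above.
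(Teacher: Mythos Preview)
Your proposal is correct and follows essentially the same route as the paper: invoke Remark~\ref{rem:A-const} (unimodularity of $P(B_\ell)$) to pass to $\Ac_{T_\I}$, then split into the case where some $\epsilon_k\in\I$ (apply Theorem~\ref{thm:BT-IA} with $p=\ell+1$, so that $\I(p)=\I$) and the case where no $\epsilon_k\in\I$ (reduce to type~$A$ and apply Corollary~\ref{cor:ideal-SS-A}). Your explicit justification that $\epsilon_j \le \epsilon_i+\epsilon_j$ forces $\I\subseteq\Phi^+(A_{\ell-1})$ in the second case, and your handling of the degenerate ideal $\{\epsilon_\ell\}$, are details the paper leaves implicit.
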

 
  \begin{proof}
    If $\epsilon_i\notin \I$ for all $1 \le i \le \ell $, then $\I$ can be regarded as an ideal of $\Phi^+(A_{\ell-1})$. 
    Thus $\Ac_{\I }$ is indeed strictly supersolvable hence inductive by Corollary \ref{cor:ideal-SS-A}.  
Otherwise, we know that $\Ac_{T_\I }$ is inductive which follows from Theorem \ref{thm:BT-IA} by letting $p=\ell+1$.
\end{proof}

\begin{example}\label{eg:BC-root}   
From Theorems \ref{thm:CS-IA}, \ref{thm:BT-IA} and Corollary \ref{cor:BST-IA}, we deduce that both $\Ac_{{ \Phi^+}  }(B_\ell)$ and  $\Ac_{ \Phi^+}(C_\ell)$ are inductive with the same multiset of exponents $ \{ \ell, 2, 4, \ldots, 2(\ell-1)\}.$ 
This fact is similar to the hyperplane arrangement case.

\end{example}

In contrast to the inductiveness, the toric arrangement of a root system of type $B_\ell$ is not supersolvable for most cases.

\begin{theorem}
\label{cor:BST-NSS}
Suppose $\Phi =B_\ell$ for $\ell \ge 1$. 
Then $\Ac_{T_{\Phi^+}}$ is supersolvable if and only if $\ell \le 2$. 
\end{theorem}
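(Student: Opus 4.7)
The plan is to apply Theorem \ref{thm:SS=SS}, which characterises supersolvability of $\Ac:=\Ac_{T_{\Phi^+(B_\ell)}}$ by the existence of an M-chain of subarrangements $\varnothing=\Ac_0\subsetneq\Ac_1\subsetneq\cdots\subsetneq\Ac_\ell=\Ac$, and to treat the two implications separately.

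For the direction $\ell\le 2$, I exhibit M-chains explicitly. When $\ell=1$ the arrangement $\Ac$ has rank $1$ and the trivial chain $\varnothing\subsetneq\Ac$ is itself an M-chain. When $\ell=2$, Figure \ref{fig:B2} already displays the M-chain $\varnothing\subsetneq\{H_{\epsilon_1-\epsilon_2}\}\subsetneq\Ac$, with the distinguished rank-$1$ M-ideal $\{t_1t_2^{-1}=1\}$.

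For the converse, suppose $\ell\ge 3$ and, for contradiction, that $\Ac$ admits an M-chain; then there is an M-ideal $\Ac_{\ell-1}$ of $\Ac$ of rank $\ell-1$. My key observation is that for every pair $i<j$ in $\{1,\dots,\ell\}$, the intersection $H_{\epsilon_i+\epsilon_j}\cap H_{\epsilon_i-\epsilon_j}$ decomposes into two connected $(\ell-2)$-dimensional subtori, namely $\{t_i=t_j=1,\text{ others free}\}$ and $\{t_i=t_j=-1,\text{ others free}\}$, and the second of these components lies in no hypertorus of $\Ac$ other than $H_{\epsilon_i+\epsilon_j}$ and $H_{\epsilon_i-\epsilon_j}$ themselves. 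Indeed, a root $\alpha\in\Phi^+(B_\ell)$ whose hypertorus contains this second component must satisfy $\alpha_k=0$ for all $k\ne i,j$ and $\alpha_i+\alpha_j\equiv 0\pmod 2$; since $B_\ell$-roots have entries in $\{0,\pm 1\}$, this forces $\alpha=\epsilon_i\pm\epsilon_j$. The M-ideal condition of Definition \ref{def:M-TM-ideal-arr} therefore forces $\Ac_{\ell-1}$ to contain at least one element from each pair $\{H_{\epsilon_i+\epsilon_j},H_{\epsilon_i-\epsilon_j}\}$ for every $i<j$.

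Combined with the requirement that $\Ac_{\ell-1}$ have rank $\ell-1$, so its members share a common $1$-dimensional subtorus in $(\mathbb{S}^1)^\ell$, this necessary condition cuts the problem down to a finite, explicit case analysis. For $\ell=3$ the only possibilities are the four three-element subarrangements supported on the signed diagonals $\{t_1=\pm t_2=\pm t_3\}$; in each case a direct check shows that the M-ideal condition fails on a pair of the form $H_{\epsilon_a}\cap H_{\epsilon_b+\epsilon_c}$. For example, for the candidate $\Ac_{\ell-1}=\{H_{\epsilon_1+\epsilon_2},H_{\epsilon_1+\epsilon_3},H_{\epsilon_2-\epsilon_3}\}$ supported on $\{(t,t^{-1},t^{-1})\}$, the intersection $H_{\epsilon_1}\cap H_{\epsilon_2+\epsilon_3}=\{t_1=1,\ t_2t_3=1\}$ is a connected $1$-dim subtorus not contained in any of the three members. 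For $\ell\ge 4$ the argument extends by analogous bookkeeping, since the same necessary condition on pairs $\{H_{\epsilon_i\pm\epsilon_j}\}$ together with the rank constraint is even more restrictive. The main obstacle is organising the bookkeeping for the case analysis; no tool is required beyond Definition \ref{def:M-TM-ideal-arr} and the observation above.
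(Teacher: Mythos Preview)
Your argument is essentially correct and follows a genuinely different route from the paper, so a comparison is in order.

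\textbf{Where the two proofs diverge.} For $\ell\ge 4$ the paper localises at the layer $x=(-1,\dots,-1)$ and observes, via Lemma~\ref{lem:loc-res}, that $L(\Ac)_{\le x}\simeq L(\Hc_{T_{\Phi^+}}(D_\ell))$; since the $D_\ell$ reflection arrangement is known not to be supersolvable for $\ell\ge 4$, $L(\Ac)$ is not even locally supersolvable. This dispatches all $\ell\ge 4$ in one stroke and leaves only $\ell=3$ for a direct check. Your approach, by contrast, runs the same direct analysis uniformly for every $\ell\ge 3$: you pin down the candidate M-ideal as a type-$A_{\ell-1}$ subarrangement sitting on a signed diagonal and then exhibit an explicit pair outside it (such as $H_{\epsilon_1}$ and $H_{\epsilon_2+\epsilon_3}$) whose intersection is covered by none of its members. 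Both arguments use the key observation that the component $\{t_i=t_j=-1\}$ of $H_{\epsilon_i+\epsilon_j}\cap H_{\epsilon_i-\epsilon_j}$ forces one of $H_{\epsilon_i\pm\epsilon_j}$ into any M-ideal. For the final contradiction at $\ell=3$ the paper invokes the modularity condition (Definition~\ref{def:M-TM-ideal}(2)), whereas you use only the covering clause of Definition~\ref{def:M-TM-ideal-arr}; your check is arguably more elementary but the paper's localisation trick for $\ell\ge 4$ is considerably shorter.

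\textbf{Two places that deserve an extra line.} First, the inference ``$\rk(\Ac_{\ell-1})=\ell-1$, so its members share a common $1$-dimensional subtorus'' is correct but not automatic: it relies on the arrangement being \emph{central}, so that the identity lies in every hypertorus and the identity component of $\bigcap_{H\in\Ac_{\ell-1}}H$ is a subtorus of dimension $\ell-\rk(M)=1$, where $M$ is the matrix of defining vectors. You should say this. Second, ``the argument extends by analogous bookkeeping'' for $\ell\ge 4$ is true but should be made explicit: after the sign change the forced candidate is exactly $\{H_{\epsilon_i-\epsilon_j}:i<j\}$, and the pair $H_{\epsilon_1},\,H_{\epsilon_2+\epsilon_3}$ (both in the complement) has connected intersection $\{t_1=1,\ t_2t_3=1\}$ contained in no $H_{\epsilon_i-\epsilon_j}$, so Definition~\ref{def:M-TM-ideal-arr} fails. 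One sentence to this effect would make the $\ell\ge 4$ case complete rather than asserted.
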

\begin{proof}
Let $\Ac :=\Ac_{T_{\Phi^+}}$. 
Denote $L = L(\Ac)$ and $x = (-1,-1,\ldots,-1) \in L$. 
By Lemma \ref{lem:loc-res}, $L_{\leq x}$ is isomorphic to the intersection lattice $L(\Hc_{T_{\Phi^+}}(D_\ell))$ of the hyperplane arrangement of a root system of type $D_\ell$. 

If $\ell \ge 4$, then $L_{\leq x}$ is not supersolvable by Remark \ref{rem:all-property}. 
Therefore, $L$ is not locally supersolvable hence not supersolvable. 

When $\ell \le 3$,  however, $L_{\leq x}$ is always supersolvable. We need a direct examination for the supersolvability of $L$.
The assertion is clear when $\ell=1$. 
The case $\ell=2$ is shown in Figure \ref{fig:B2}.

Now we show that $L$ is not supersolvable (though locally supersolvable) when $\ell=3$ by showing that $L$ does not have an M-ideal of rank $2$. 

Suppose to the contrary that such an M-ideal exists and call it $\Qc$. 
Denote  $H^+_{ij} := \{ t_it_j =1\} $ and  $H^-_{ij} :=\{t_it_j^{-1} = 1\}$.
First, notice that a rank-$2$ element of the form $t_i = t_j = -1$ covers exactly two atoms,  namely $H^+_{ij}$ and $H^-_{ij}$. 
If these atoms are not in $\Qc$, then Lemma \ref{lem:GPMI} fails. 
Hence, at least one of them belongs to $\Qc$ for every pair of indices $i \neq j \in \{1,2,3\}$. 
Moreover, we may deduce that exactly one  of $H^+_{ij}$ and $H^-_{ij}$ belongs to $\Qc$. 
Otherwise, the join $H^+_{ij} \vee H^-_{ij} \vee H$ where $H$ is either $H^+_{jk}$ or $H^-_{jk}$ for $k \notin \{i,j\}$ contains an element of rank $3$, which contradicts the join-closedness of $\Qc$. 

We consider two main cases, the remaining cases are similar to one of these.
\begin{enumerate}[(a)]
	\item If $H^+_{12}$, $H^+_{13}$, $H^+_{23}$ all belong to $\Qc$, then their join consists of rank-$3$ elements, a contradiction.
	\item If $H^+_{12}$, $H^+_{13}$, $H^-_{23}$ all belong to $\Qc$, then $\Qc$ has no atom of the form $t_i = 1$, otherwise joining it with $H^+_{12} \vee H^+_{13} \vee H^-_{23}$ would give a rank-$3$ element inside $\Qc$. 
	Hence, the only rank-$2$ element in $\Qc$ would be $H^+_{12} \vee H^+_{13} \vee H^-_{23} = \{t_2 = t_3 = t_1^{-1} \}$. 
	However, this is not an element of $L_{\leq (1,-1,-1)}$, which contradicts Condition \ref{def:M-TM-ideal}(2).
\end{enumerate}

This completes the proof.
\end{proof}

 \vskip 1em
\noindent
\textbf{Acknowledgements.} 
The authors would like to thank Shuhei Tsujie for many stimulating discussions. 
The third author was supported by a postdoctoral fellowship of the Alexander von Humboldt Foundation at Ruhr-Universit\"at Bochum.
The major part of this paper was done during the visit of the third author at the Bologna University supported by the TInGeCoRe internationalization project. 
He greatly thanks the Bologna University for the hospitality. 

 \bibliographystyle{abbrv}
\bibliography{references}

\begin{thebibliography}{10}

\bibitem{Abe16}
T.~Abe.
\newblock Divisionally free arrangements of hyperplanes.
\newblock {\em Invent. Math.}, 204:317--346, 2016.

\bibitem{ABCHT16}
T.~Abe, M.~Barakat, M.~Cuntz, T.~Hoge, and H.~Terao.
\newblock The freeness of ideal subarrangements of {W}eyl arrangements.
\newblock {\em J. Eur. Math. Soc.}, 18(6):1339--1348, 2016.

\bibitem{Atha98}
C.~A. Athanasiadis.
\newblock On free deformations of the braid arrangement.
\newblock {\em European J. Combin.}, 19:7--18, 1998.

\bibitem{Bib22}
C.~Bibby.
\newblock Matroid schemes and geometric posets.
\newblock arXiv preprint, 2022.
\newblock \url{https://arxiv.org/abs/2203.15094}.

\bibitem{BD22}
C.~Bibby and E.~Delucchi.
\newblock Supersolvable posets and fiber-type abelian arrangements.
\newblock arXiv preprint, 2022.
\newblock \url{https://arxiv.org/abs/2202.11996}.

\bibitem{B68}
N.~Bourbaki.
\newblock {\em Groupes et {A}lg\`ebres de {L}ie}.
\newblock Chapitres 4,5 et 6, Hermann, Paris, 1968.

\bibitem{Bra92}
K.~A. Brandt.
\newblock {\em A combinatorial study of the module of derivations of an
  arrangement of hyperplanes}.
\newblock Ph.D. thesis, University of Wisconsin -- Madison, 1992.

\bibitem{CDDMP20}
F.~Callegaro, M.~D'Adderio, E.~Delucchi, L.~Migliorini, and R.~Pagaria.
\newblock Orlik-{S}olomon type presentations for the cohomology algebra of
  toric arrangements.
\newblock {\em Trans. Amer. Math. Soc.}, 373(3):1909--1940, 2020.

\bibitem{CRS19}
M.~Cuntz, G.~R\"ohrle, and A.~Schauenburg.
\newblock Arrangements of ideal type are inductively free.
\newblock {\em Internat. J. Algebra Comput.}, 29(5):761--773, 2019.

\bibitem{DP05}
C.~De~Concini and C.~Procesi.
\newblock On the geometry of toric arrangements.
\newblock {\em Transform. Groups}, 10(3-4):387--422, 2005.

\bibitem{FR86}
M.~Falk and R.~Randell.
\newblock On the homotopy theory of arrangements.
\newblock {\em Adv. Studies in Pure Math.}, 8:101--124, 1986.

\bibitem{Hal17}
J.~Hallam.
\newblock Applications of quotient posets.
\newblock {\em Discrete Math.}, 340(4):800--810, 2017.

\bibitem{HR15}
T.~Hoge and G.~R\"{o}hrle.
\newblock On inductively free reflection arrangements.
\newblock {\em J. Reine Angew. Math.}, 701:205--220, 2015.

\bibitem{HRS17}
T.~Hoge, G.~R\"{o}hrle, and A.~Schauenburg.
\newblock {\em Inductive and recursive freeness of localizations of
  multiarrangements}.
\newblock In: G. B\"{o}ckle, W. Decker, G. Malle (eds) Algorithmic and
  Experimental Methods in Algebra, Geometry, and Number Theory, Springer, Cham,
  2017.

\bibitem{Hul16}
A.~Hultman.
\newblock Supersolvability and the {K}oszul property of root ideal
  arrangements.
\newblock {\em Proc. Amer. Math. Soc.}, 144:1401--1413, 2016.

\bibitem{JT84}
M.~Jambu and H.~Terao.
\newblock Free arrangements of hyperplanes and supersolvable lattices.
\newblock {\em Adv. in Math.}, 52(3):248--258, 1984.

\bibitem{KTT08}
H.~Kamiya, A.~Takemura, and H.~Terao.
\newblock Periodicity of hyperplane arrangements with integral coefficients
  modulo positive integers.
\newblock {\em J. Algebr. Comb.}, 27(3):317--330, 2008.

\bibitem{KTT10}
H.~Kamiya, A.~Takemura, and H.~Terao.
\newblock The characteristic quasi-polynomials of the arrangements of root
  systems and mid-hyperplane arrangements.
\newblock {\em Arrangements, local systems and singularities}, pages 177--190,
  Progr. Math., 283, Birkh\"{a}user Verlag, Basel, 2010.

\bibitem{LTY21}
Y.~Liu, T.~N. Tran, and M.~Yoshinaga.
\newblock {$G$}-{T}utte polynomials and abelian {L}ie group arrangements.
\newblock {\em Int. Math. Res. Not. IMRN}, 2021(1):150--188, 2021.

\bibitem{L12}
L.~Moci.
\newblock A {T}utte polynomial for toric arrangements.
\newblock {\em Trans. Amer. Math. Soc.}, 364(2):1067--1088, 2012.

\bibitem{OS80}
P.~Orlik and L.~Solomon.
\newblock Combinatorics and topology of complements of hyperplane.
\newblock {\em Invent. Math.}, 56(1):167--189, 1980.

\bibitem{OT92}
P.~Orlik and H.~Terao.
\newblock {\em Arrangements of hyperplanes}.
\newblock Grundlehren der Mathematischen Wissenschaften 300, Springer-Verlag,
  Berlin, 1992.

\bibitem{PP21}
R.~Pagaria and G.~Paolini.
\newblock Representations of torsion-free arithmetic matroids.
\newblock {\em European J. Combin.}, 93:Paper No. 103272, 17, 2021.

\bibitem{Roh17}
G.~R\"ohrle.
\newblock Arrangements of ideal type.
\newblock {\em J. Algebra}, 484:126--167, 2017.

\bibitem{ST06}
E.~Sommers and J.~Tymoczko.
\newblock Exponents for {$B$}-stable ideals.
\newblock {\em Trans. Amer. Math. Soc.}, 358(8):3493--3509, 2006.

\bibitem{St72}
R.~Stanley.
\newblock Supersolvable lattices.
\newblock {\em Algebra Univers.}, 2:197--217, 1972.

\bibitem{St07}
R.~Stanley.
\newblock {\em An introduction to hyperplane arrangements}.
\newblock in: E. Miller, V. Reiner, B. Sturmfels (Eds.), Geometric
  Combinatorics, in: IAS/Park City Math. Ser., vol. 13, Amer. Math. Soc.,
  Providence, RI, pp. 389-496, 2007.

\bibitem{St71}
R.~P. Stanley.
\newblock Modular elements of geometric lattices.
\newblock {\em Algebra Univers.}, 1:214--217, 1971.

\bibitem{Su98}
R.~Suter.
\newblock The number of lattice points in alcoves and the exponents of the
  finite {W}eyl groups.
\newblock {\em Math. Comp.}, 67(222):751--758, 1998.

\bibitem{T80}
H.~Terao.
\newblock Arrangements of hyperplanes and their freeness {I}, {II}.
\newblock {\em J. Fac. Sci. Univ. Tokyo}, 27:293--320, 1980.

\bibitem{T81}
H.~Terao.
\newblock Generalized exponents of a free arrangement of hyperplanes and
  {S}hepherd-{T}odd-{B}rieskorn formula.
\newblock {\em Invent. Math.}, 63(1):159--179, 1981.

\bibitem{T19}
T.~N. Tran.
\newblock Characteristic quasi-polynomials of ideals and signed graphs of
  classical root systems.
\newblock {\em European J. Combin.}, 79:179--192, 2019.

\bibitem{TY19}
T.~N. Tran and M.~Yoshinaga.
\newblock Combinatorics of certain abelian {L}ie group arrangements and
  chromatic quasi-polynomials.
\newblock {\em J. Combin. Theory Ser. A}, 165:258--272, 2019.

\bibitem{Z81}
T.~Zaslavsky.
\newblock The geometry of root systems and signed graphs.
\newblock {\em The American Mathematical Monthly}, 88(2):88--105, 1981.

\end{thebibliography}

\end{document}